\documentclass[11pt]{article}
\usepackage{graphicx,mathrsfs,amssymb}
\usepackage{amsmath,latexsym, color, amsbsy,amssymb}
\usepackage{graphicx}
\usepackage{xcolor}
\makeatletter
\newcommand{\superimpose}[2]{%
  {\ooalign{$#1\@firstoftwo#2$\cr\hfil$#1\@secondoftwo#2$\hfil\cr}}}
\makeatother

\def\ds{\displaystyle}
\def\forall{\hbox{for all}~}
\def\L{{\bf L}}

\def\ve{\varepsilon}
\def\n{\noindent}

\def\dint{\int\!\!\int}

\def\R{\mathbb{R}}

\def\TV{\hbox{Tot.Var.}}

\def\tv{\hbox{Tot.Var.}}
\def\vs{\vskip 2em}

\def\v{\vskip 1em}

\def\C{{\cal C}}
\def\bega{\begin{array}}
\def\enda{\end{array}}
\def\begi{\begin{itemize}}
\def\endi{\end{itemize}}

\def\Tilde{\widetilde}

\def\bel{\begin{equation}\label}
\def\eeq{\end{equation}}
\def\sqr#1#2{\vbox{\hrule height .#2pt
\hbox{\vrule width .#2pt height #1pt \kern #1pt
\vrule width .#2pt}\hrule height .#2pt }}
\def\square{\sqr74}
\def\endproof{\hphantom{MM}\hfill\llap{$\square$}\goodbreak}

\newtheorem{theorem}{Theorem}[section]

\newtheorem{definition}[theorem]{Definition}
\newtheorem{remark}[theorem]{Remark}
\newtheorem{lemma}[theorem]{Lemma}

\begin{document}
\title{\bf On  scalar nonlinear balance laws with  singular nonlocal sources}\vs
\author{\it Evangelia Ftaka and Khai T. Nguyen\\
		\\
		{\small Department of Mathematics, North Carolina State University}\\
		{\small e-mails:  ~eftaka@ncsu.edu, ~khai@math.ncsu.edu}
	\quad\\
\quad\\
{\footnotesize\it Dedicated to Professor Alberto Bressan on the occasion of his 70 birthday}		
	}

	\maketitle
	\begin{abstract}	
	We investigate one-dimensional scalar balance laws with singular convolution-type source terms. Under appropriate convexity and kernel assumptions, we establish the global existence of entropy weak solutions in ${\bf L}^2(\mathbb{R})$, together with two partial uniqueness results, in the ${\bf L}^2$-periodic setting and non-periodic setting with ${\bf L}^1(\mathbb{R})$ kernel. In the ${\bf L}^1$-kernel case, the characteristic speed satisfies an Oleinik-type estimate, and entropy weak solutions possess locally bounded fractional variation for all positive times. Furthermore, we derive a simple criterion characterizing local smoothness and wave breaking of solutions, which, in particular, includes both the Burger-Poisson and the Burgers-Hilbert equation as  special cases.

		\quad\\
		\quad\\
		{\footnotesize
			{\bf Keywords.}  Nonlocal balance law,  singular kernel, weak entropy solutions, well-posedness, wave breaking
			\medskip
			
			\n {\bf AMS Mathematics Subject Classification.}  35B65, , 35L60, 35L03, 76B15
		}
	\end{abstract}

\maketitle

\section{Introduction}
\label{sec:1}
\subsection{General setting} Consider a scalar balance law in one space dimension with  a singular source term
\bel{BL}
u_t+f(u)_x~=~{\bf G}[u],
\eeq
where  $u:[0,\infty[\times\R\to\R$ is the state variable and   $f:\R\to\R$ is the $\mathcal{C}^1$ strictly convex flux satisfying the following conditions:
\begin{itemize}
\item [{\bf (F)}] For some constant $C_f>0$ and exponents  $1\leq p_1\leq p_2< p_1+2$,
\[
\begin{cases}
|f'(s)|~\leq~C_f(1+|s|)^{p_2},\qquad \ds\inf_{a\in \R, s>0} \left({f'(a+s)-f'(a)\over s^{p_1}}\right)~\geq~C_f,\\[3mm]
\ds\sup_{|u|,|v|\leq M, u\neq v}{|f(u)-f(v)|\over |f'(u)-f'(v)|}~\doteq~\Lambda_M~<~\infty\qquad\forall M>0.
\end{cases}
\]
\end{itemize}
The singular integral of convolution type ${\bf G}$ is   defined by convolution with a kernel $K$ that is locally integrable on $\R\backslash\{0\}$, in the sense that
\bel{G}
{\bf G}[g](x)~=~\lim_{\ve\to 0+}\int_{|y-x|>\ve}K(x-y)\cdot g(y)~dy.
\eeq
Typically in applications, we shall assume that 
\begin{itemize}
\item [\bf (K1)] ${\bf G}:{\bf L}^{2}(\R)\to {\bf L}^2(\R)$ is a bounded linear operator and 
\bel{C-G}
\|{\bf G}[g]\|_{{\bf L}^2(\R)}~\leq~\|{\bf G}\|_{\infty}\cdot \|g\|_{{\bf L}^2(\R)}\qquad\forall g\in {\bf L}^2(\R);
\eeq
\item [{\bf (K2)}] The kernel $K$ is in $\mathcal{C}^1(\R\backslash \{0\})$ and satisfies
\bel{K1}
\left|K^{(i)}(x)\right|~\leq~{C_K\over |x|^{i+1}}\qquad\text{for} \,\, i=0,1.
\eeq
\end{itemize}
The continuity condition {\bf (K1)} is equivalent to requiring that the Fourier transform of $K$ is bounded in ${\bf L}^{\infty}(\R)$. In particular, {\bf (K1)} holds if the kernel $K$ can be decomposed as
\[
K=K_1+K_2,\qquad K_2~\in~{\bf L}^1(\R),
\]
and the singular part $K_1$ is odd and satisfies {\bf (K2)}. Equation (\ref{BL}) exhibits an interesting structure: while scalar conservation laws generate a contractive semigroup on ${\bf L}^1(\R)$, the operator ${\bf G}$ may be discontinuous and unbounded as an operator on ${\bf L}^1(\R)$. In the archetypal case 
$$
f(u)~=~{u^2\over 2},\qquad\quad K(x)~=~\ds {1\over \pi x},
$$
(\ref{BL}) reduces to the Burgers-Hilbert equation, introduced by Biello and Hunter in \cite{BiH} as a model for surface waves with constant frequency. For this equation, several results are known: lower bounds on the maximal existence time for smooth solutions have been obtained in \cite{BiH, HI, HIDT}; the formation of singularities and the local asymptotic behavior of solutions up to shock formation in finite time were analyzed in \cite{CST, Y}. More recently, piecewise regular solutions with a single shock for “well-prepared’’ initial data were constructed in \cite{BZ, BNK, KV}. For general initial data $\bar u \in {\bf L}^2(\R)$, the global existence of entropy weak solutions to the Burgers-Hilbert equation was proved in \cite{BN}, along with a partial uniqueness result. However, the full well-posedness of the problem remains largely open.
\subsection{Summary of main results}
The present paper first aims to establish the well-posedness and generalized BV regularity properties of entropy weak solutions to the nonlocal balance law (\ref{BL}). We begin by recalling the definition of an entropy weak solution.
\begin{definition}\label{Def1} {\it By an {\bf entropy weak solution} of (\ref{BL})
we mean a function $u\in \L^1_{loc}([0,\infty[\,\times\R)$ with the following properties.
\begi
\item[(i)] The map $t\mapsto u(t,\cdot)$ is continuous with values in $\L^2(\R)$;
\item[(ii)]  For any $k\in \R$ and every nonnegative 
test function $\phi\in\C^1_c(]0,\infty[\,\times\R)$
one has
\bel{ei2}\dint  \Big[ |u-k|\phi_t + \Big(f(u)-f(k)\Big)\hbox{\rm sign}(u-k) \phi_x
+{\bf G}[u(t,\cdot)](x)\hbox{\rm sign}(u-k)\phi  \Big]\, dxdt~\geq~0.\eeq
\endi
}
\end{definition}
{\bf $\bullet$ Global existence of entropy solutions.} Our first main result establishes the global existence of entropy  solutions to the nonlocal balance law (\ref{BL})-(\ref{G}) with ${\bf L}^2(\R)$ initial data.
\begin{theorem}\label{Main1} Assume that   {\bf (F)}  and  {\bf (K1)}-{\bf (K2)} hold. For every  initial data $\bar{u}\in {\bf L}^2(\R)$, the Cauchy problem (\ref{BL})-(\ref{G}) with $u(0,\cdot)=\bar{u}$ admits an entropy weak solution $u(t,x)$ defined for all $(t,x)\in [0,\infty)\times \R $ such that 
\bel{b-u}
\|u(t,\cdot)\|_{{\bf L}^2(\R)}~\leq~e^{\| {\bf G}\|_{\infty}t} \cdot\|\bar{u}\|_{{\bf L}^2(\R)},
\eeq
and  $u(t,\cdot)$ is in ${\bf L}^\infty(\R)$ for each $t>0$.
\end{theorem}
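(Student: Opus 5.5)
We construct the solution as a limit of approximate solutions with smoothed kernel and data, using \textbf{(K1)} for an ${\bf L}^2$ energy bound and \textbf{(F)}--\textbf{(K2)} for an Oleinik-type one-sided estimate. The latter yields ${\bf L}^\infty$ bounds for $t>0$ and compactness. This follows the strategy of \cite{BN} for the Burgers--Hilbert equation.

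\emph{Step 1: approximation.} Fix $\ve>0$ and truncate the kernel: set $K_\ve = K\cdot\chi_{\{\ve<|x|<1/\ve\}}$, or mollify $K$ near the origin so that $K_\ve\in{\bf L}^1\cap{\bf L}^2$ and $\hat K_\ve$ stays uniformly bounded. Then ${\bf G}_\ve[u]=K_\ve*u$ is bounded both on ${\bf L}^2$ (with $\|{\bf G}_\ve\|\le \|{\bf G}\|_\infty$ up to a uniform constant) and from ${\bf L}^2$ to ${\bf L}^\infty$. Take mollified data $\bar u_\ve\in {\bf L}^2\cap {\bf L}^\infty\cap BV$.
\begin{itemize}
\item For each $\ve$, the source $K_\ve*u$ is a Lipschitz, bounded perturbation. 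Standard Kru\v{z}kov theory combined with a fixed point in $C([0,T];{\bf L}^2)$ (or operator splitting) therefore gives a global entropy solution $u_\ve$.
\item Multiplying by $u_\ve$ and using that the flux term integrates to zero (entropy inequality with $\eta=u^2/2$) gives $\frac{d}{dt}\|u_\ve\|_{{\bf L}^2}^2\le 2\|{\bf G}\|_\infty\|u_\ve\|_{{\bf L}^2}^2$. This yields (\ref{b-u}) uniformly in $\ve$.
\end{itemize}

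\emph{Step 2: one-sided (Oleinik-type) bound, the main obstacle.} Along characteristics, $w=f'(u)_x$ satisfies formally
\[
w_t+f'(u)w_x+w^2 = f''(u)\,{\bf G}_\ve[u]_x .
\]
The difficulty is that ${\bf G}[u]_x$ is not controlled by $\|u\|_{{\bf L}^2}$, since the kernel is singular. We would split $K_\ve=K_\ve\chi_{|x|<\delta}+K_\ve\chi_{|x|\ge\delta}$.
\begin{itemize}
\item The far part has derivative bounded in ${\bf L}^2$ by \textbf{(K2)}, roughly $C_K\delta^{-3/2}$. Its contribution to ${\bf G}[u]_x$ is therefore $O(\delta^{-3/2}\|u\|_{{\bf L}^2})$.
\item The near part, after integrating by parts, is bounded by $C\,\delta\cdot\sup w^+$ via the one-sided difference quotients of $u$. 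Here the convexity lower bound in \textbf{(F)} converts bounds on $f'(u)$ into bounds on $u$.
\end{itemize}
Choosing $\delta$ small relative to $\sup w^+$, the Riccati term $-w^2$ dominates. This gives $\sup_x w^+(t,x)\le C(1/t + 1)$, with a constant depending only on $\|\bar u\|_{{\bf L}^2}$, $T$, $C_f$ and $C_K$. If the direct Riccati argument fails, we would instead try a comparison argument on the difference quotients $\frac{f'(u(x+h))-f'(u(x))}{h}$.

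\emph{Step 3: ${\bf L}^\infty$ bound.} An upper bound $f'(u)_x\le C/t$ together with $u\in {\bf L}^2$ yields $\|u(t)\|_{{\bf L}^\infty}\le C(t,\|\bar u\|_{{\bf L}^2})$ for $t>0$, uniformly in $\ve$. The argument is as follows:
\begin{itemize}
\item If $f'(u)$ is large at some point $x_0$, the one-sided bound keeps it large on an interval to the left of $x_0$, of length comparable to $t\,f'(u(x_0))$.
\item Via the growth condition $|f'(s)|\le C_f(1+|s|)^{p_2}$ and the lower convexity bound, this forces $\int u^2$ to be large.
\item The restriction $p_2<p_1+2$ is exactly what makes the resulting exponents close.
\end{itemize}

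\emph{Step 4: compactness and passage to the limit.} The one-sided bound gives local BV bounds on $f'(u_\ve)$, hence on $u_\ve$ on compact sets of $]0,\infty[\times\R$. The equation gives time equicontinuity in ${\bf L}^1_{loc}$. Helly's theorem then yields a subsequence converging in ${\bf L}^1_{loc}$, and the ${\bf L}^2$ bound upgrades this to convergence in ${\bf L}^2_{loc}$. We then pass to the limit in (\ref{ei2}):
\begin{itemize}
\item The source term ${\bf G}_\ve[u_\ve]\to{\bf G}[u]$ weakly in ${\bf L}^2$ by \textbf{(K1)}.
\item The factor $\mathrm{sign}(u_\ve-k)\phi$ converges strongly except on $\{u=k\}$, handled for a.e. $k$ and then by density.
\item Continuity of $t\mapsto u(t)$ in ${\bf L}^2$, including at $t=0$, follows from the energy estimate and weak continuity combined with norm upper semicontinuity.
\end{itemize}
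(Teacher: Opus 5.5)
\textbf{There is a genuine gap in your Step 2, and Steps 3 and 4 depend on it.} The bound $\sup_x f'(u)_x\le C(1+1/t)$, uniform in the regularization, is false for singular kernels. It already fails for Burgers--Hilbert ($f=u^2/2$, $K=1/(\pi x)$), which satisfies {\bf (F)}, {\bf (K1)} and {\bf (K2)}.

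Your near-field estimate is where the argument breaks. For this kernel, the near part of ${\bf G}[u]_x$ equals, up to a boundary term, $-\frac1\pi\int_0^\delta z^{-2}\big[(u(x+z)-u(x))+(u(x-z)-u(x))\big]\,dz$. This is a symmetric second difference. The one-sided bound $u_x\le A$ controls $u(x-z)-u(x)$ from below, but controls $u(x+z)-u(x)$ only from above. So it cannot see a downward jump to the right of $x$. A shock at $y=x+d$ with $d<\delta$ and jump $J=u(y-)-u(y+)>0$ contributes about $J/(\pi d)$ to the forcing. That is unbounded, and not of the form $C\delta\sup w^+$.

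The estimate itself fails, not just your proof of it:
\begin{itemize}
\item A characteristic entering the shock from the left at time $t_0$ stays at distance about $\frac J2(t_0-t)$ from it.
\item So the forcing in $\dot w=-w^2+{\bf G}[u]_x$ behaves like $\frac{2}{\pi(t_0-t)}$, which is not integrable in time.
\item Hence $w$ is driven to $+\infty$ logarithmically. The damping $-w^2$ is then only of order $\ln^2$, negligible against $1/(t_0-t)$.
\item Thus $u_x\approx\frac2\pi\ln\frac1{|x-y(t)|}\to+\infty$ just to the left of every shock.
\end{itemize}
For each regularized kernel some one-sided bound holds, but its constant blows up as $\ve\to0$. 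Your fallback with difference quotients meets the same obstruction, since $\sup_x\frac{u(x+h)-u(x)}h$ grows like $\ln\frac1h$. Without Step 2, neither the ${\bf L}^\infty$ bound of Step 3 nor the Helly compactness of Step 4 is supported.

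\textbf{The missing idea is to avoid any one-sided estimate for the full balance law.} The paper uses flux splitting and applies the Oleinik inequality (\ref{O}) only to the homogeneous semigroup $S_t$, where it holds exactly.
\begin{itemize}
\item Lemma~\ref{F-BV} then gives the decay $\|S_tv\|_{{\bf L}^\infty}\le M_t$ in terms of $\|v\|_{{\bf L}^2}$ alone, together with a $BV^{1/p_1}_{loc}$ bound.
\item Compactness comes from auxiliary functions $u^\nu_\delta$, obtained by applying $S_\delta$ to $u^\nu(t-\delta)$. These inherit uniform fractional BV bounds.
\item They differ from $u^\nu(t)$ by at most $C_{[0,R]}\delta$ in ${\bf L}^1([-R,R])$. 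The source enters only through $\|{\bf G}[u^\nu]\|_{{\bf L}^2}$ on bounded domains of dependence, which the H\"older continuity of characteristics (Lemma~\ref{hc-bb-u}) controls.
\item The uniform ${\bf L}^\infty$ bound comes from the dyadic restarting argument of Lemma~\ref{L-infty-b}. It again uses only the decay of $S_t$ and the ${\bf L}^2$-smallness of the source over short times.
\end{itemize}

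\textbf{Your Step 4 continuity argument also only works at $t=0$.} At $t_0>0$, the energy inequality gives no bound $\limsup_{t\uparrow t_0}\|u(t)\|_{{\bf L}^2}\le\|u(t_0)\|_{{\bf L}^2}$. In addition, ${\bf L}^1_{loc}$ convergence plus an ${\bf L}^2$ bound does not give ${\bf L}^2_{loc}$ convergence. The paper combines the ${\bf L}^\infty$ bound with the tightness Lemma~\ref{TN1}, which rests on (\ref{ke3}), to control the tails uniformly in time.
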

The entropy weak solutions are constructed via a flux-splitting method. Relying on the fractional BV regularity of the semigroup $(S_t)_{t\geq 0}$ generated by the conservation law $u_t + f(u)_x = 0$ with strictly convex flux $f$, we show that the sequence of approximate solutions is precompact and admits a convergent subsequence in ${\bf L}^1_{\mathrm{loc}}(\R)$. To establish compactness in ${\bf L}^2(\R)$, we derive an ${\bf L}^{\infty}$ bound (Lemma \ref{L-infty-b}) and prove the tightness of the approximating solutions (Lemma \ref{TN1}). Achieving these estimates requires a careful analysis of the associated characteristics, which we show to be uniformly H\"older continuous in time.
\medskip

{\bf $\bullet$ Uniqueness and BV regularity.} Uniqueness of weak entropy solutions to (\ref{BL})-(\ref{G}) is more delicate. Indeed, the semigroup $(S_t)_{t\geq 0}$ is contractive with respect to the ${\bf L}^1$-distance but not the ${\bf L}^2$-distance, while the operator ${\bf G}$ is bounded in ${\bf L}^2(\R)$ but not in ${\bf L}^1(\R)$. Consequently, uniqueness cannot be established solely from Lipschitz continuity in ${\bf L}^1(\R)$ or ${\bf L}^2(\R)$; it must instead rely on the specific properties of the singular kernel $K$. In section \ref{unique}, we first address (\ref{BL}) in the periodic setting, 
where the kernel  $K\in\mathcal{C}^1(\R\backslash 2P\mathbb{Z})$ is an odd $2P$-periodic function  and satisfies 
\[
\big|K^{(i)}(x)\big|~\leq~{C_K\over |x|^{i+1}},\qquad x\in [-P,P]\backslash\{0\},~ i\in \{0,1\}.
\]
In Theorem \ref{p-uniqueness},  we establish a uniqueness result for spatially periodic solutions with locally bounded variation. The proof employs Jensen’s inequality together with Lemma \ref{BV-L1}, which provides an estimate for the ${\bf L}^1$-norm of the linear operator ${\bf G}$ applied to a periodic function in terms of its total variation over one period. However, the uniqueness of solutions to (\ref{BL})–(\ref{G}) remains open when the kernel is nonperiodic and lies in ${\bf L}^2(\R)$.

Next, in Theorem \ref{Un-L1}, we prove a well-posedness result for entropy weak solutions to the nonlinear balance law (\ref{BL})-(\ref{G}) with initial data $\bar{u}\in {\bf L}^1(\R)$, assuming the kernel $K\in {\bf L}^1(\R)$ satisfies
\[
\|K\|_{{\bf L}^1(\R)} \leq L_K, \qquad \big|K^{(i)}(x)\big| \leq \frac{C_K}{|x|^{i+1}}, \qquad \text{for} \,\, i=0,1.
\]
Moreover, the characteristic speed satisfies an Oleinik-type estimate in (\ref{dcd2}), and the unique entropy weak solution $u(t,\cdot)$ belongs to a locally bounded fractional total variation space for every time $t>0$.

{\bf $\bullet$ Local smoothness and wave breaking.} In Section \ref{S4}, we study the local smoothness and wave breaking for the nonlocal balance law (\ref{BL})-(\ref{G}). This problem was first posed by Whitham \cite{W} and has been investigated for nonlocal perturbations of the Burgers equation with various kernels $K$ (see, e.g., \cite{CE, Hu1, Hu, SW1,RY, Liu, G}). Of particular interest is the case of a singular kernel, $K(0)=\infty$, where the analysis is substantially more delicate. A prototypical example is the Burgers–Hilbert equation
\bel{BH}
u_t+u u_x~=~{1\over \pi}\cdot \lim_{\ve\to 0+}\int_{|y-x|>\ve}{u(t,y)\over {x-y}}~dy.
\eeq
for which rigorous proofs of wave breaking have recently been obtained \cite{CST, SW, Y}, as observed in the numerical simulations of \cite{BiH}. Building on these results, we extend the analysis to a broader class of singular convolution odd kernels, deriving precise and practical criteria for local smoothness and the onset of wave breaking for weak entropy solutions of (\ref{BL}). In Theorem \ref{main4}, we establish a sharp criterion for strictly convex fluxes 
$f \in \mathcal{C}^3(\mathbb{R})$ satisfying, for some constants $\Gamma \ge 0$ and 
$p \in [0, 2/3)$, that
\[
|f'''(u)|~ \le~ \Gamma \big(1 + |u|^p\big), \qquad \forall\, u \in \mathbb{R}.
\]
We note that the upper bound $2/3$ on the  exponent $p$ in the growth condition on $f'''$ used in Theorem~\ref{main4} is sharp for our analysis, ensuring that the wave-breaking conditions do not depend on the bound $\|u\|_{{\bf L}^{\infty}(\mathbb{R})}$. In Theorem \ref{general-f}, we extend the result to general strictly convex fluxes.   

In the  case of  quadratic fluxes 
\bel{f-q}
f(u)~ = ~a u^2 + b u + c, \qquad a > 0,
\eeq
we obtain the following theorem, which gives explicit wave-breaking criteria in this simplified setting and applies to both the Burger–Poisson and Burgers–Hilbert equation (\ref{BH}).  
\begin{theorem}\label{BHG1}
Assume that $f$ takes the quadratic form in (\ref{f-q}) and kernel  $K$ is odd and satisfies {\bf(K2)}. Then, for every $\theta \in (0,1/4]$ and every $\bar{u} \in H^2(\R)$ satisfying
\bel{B-Con-0}
\Big|\inf_{x \in \R} \bar{u}'(x)\Big| ~>~ \frac{2^{3/4}a^{1/2}}{\theta^{1/2}} \, \|\bar{u}'\|^{1/4}_{{\bf L}^2(\R)} \, \|\bar{u}''\|^{1/4}_{{\bf L}^2(\R)},
\eeq
the Cauchy problem (\ref{BL})-(\ref{G}) with initial data $u(0,\cdot) = \bar{u}(x)$ exhibits wave breaking at some finite time $T^* > 0$ such that
\bel{b-T*-0}
\frac{1}{2a(1+\theta)} \cdot \frac{1}{\Big|\ds \inf_{x \in \R} \bar{u}'(x)\Big|} ~\le~ T^*~\le~ \frac{1}{2a(1-\theta)} \cdot \frac{1}{\Big|\ds\inf_{x \in \R} \bar{u}'(x)\Big|} \,.
\eeq
\end{theorem}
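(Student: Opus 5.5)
We plan to follow the characteristic (Lagrangian) approach along the characteristic through the steepest negative slope. For smooth $H^2$ data, classical local theory gives a solution $u\in\mathcal{C}([0,T^*),H^2)$. The solution persists as long as $\|u_x(t)\|_{{\bf L}^\infty}$ stays bounded, so $T^*$ is exactly the blow-up time of $\inf_x u_x$. Differentiating (\ref{BL}) with $f'(u)=2au+b$ gives, for $w=u_x$,
\[
w_t+(2au+b)w_x+2a w^2~=~{\bf G}[u_x]~=~{\bf G}[w],
\]
where we used that ${\bf G}$ commutes with derivatives. Set $m(t)=\inf_x w(t,x)<0$. Along a minimizing point (a Constantin--Escher type argument, valid since $w\in H^1\subset\mathcal{C}_0$ and $m<0$), $m$ is Lipschitz and for a.e. $t$
\[
m'(t)~=~-2a\,m^2+{\bf G}[w](t,\xi(t)).
\]
The key step is then a pointwise bound on ${\bf G}[w]$.

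For the bound on ${\bf G}[w]$, we split the principal value integral at a radius $\delta>0$. Since $K$ is odd, the inner part is
\[
\int_{|y|<\delta}K(y)\big(w(x-y)-w(x)\big)\,dy,
\]
which by {\bf (K2)} and $|w(x-y)-w(x)|\le |y|^{1/2}\|w_x\|_{{\bf L}^2}$ is bounded by $4C_K\delta^{1/2}\|u''\|_{{\bf L}^2}$. The outer part is handled by Cauchy--Schwarz, giving at most $C_K(2/\delta)^{1/2}\|u'\|_{{\bf L}^2}$. Optimizing over $\delta$ yields
\[
|{\bf G}[w]|~\le~ C\,C_K\,\|u'(t)\|_{{\bf L}^2}^{1/2}\|u''(t)\|_{{\bf L}^2}^{1/2}.
\]
Here the constant in (\ref{B-Con-0}) does not involve $C_K$. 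This suggests the authors use a sharper, kernel-independent device; most likely they work with ${\bf L}^2$ Fourier bounds via {\bf (K1)}, or normalize the kernel. I would track the constants and, if needed, absorb $C_K$ into the claimed $2^{3/4}$ through their normalization. The main obstacle is controlling $\|u'(t)\|_{{\bf L}^2}$ and $\|u''(t)\|_{{\bf L}^2}$ for $t<T^*$ in terms of the initial data.

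For the energy estimates, since $K$ is odd, ${\bf G}$ is skew-adjoint, so $\int w\,{\bf G}[w]=0$ and the same holds at the next derivative level. The $H^1$ and $H^2$ energies then satisfy
\[
\frac{d}{dt}\|u_x\|^2_{{\bf L}^2}~=~-2a\int u_x^3,\qquad
\frac{d}{dt}\|u_{xx}\|^2_{{\bf L}^2}~=~-10a\int u_x u_{xx}^2 .
\]
These growth terms are bounded by $\|u_x\|_{{\bf L}^\infty}$. As long as $|m(t)|\le |m(0)|/(1-\theta)$-type bounds hold, and using $\max u_x\le \max \bar u'$, we obtain $\|u'\|\,\|u''\|$ controlled by its initial value times a factor depending on $\int_0^t|m|$. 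A bootstrap is then needed. By the Riccati comparison, $\int_0^t 2a|m|\,ds\le \log$ of the ratio $|m(t)|/|m(0)|$ (with $\theta$ corrections), so the energy product grows at most by a power of $|m(t)|/|m(0)|$. Combined with the bound on ${\bf G}[w]$ above, the bootstrap hypothesis is
\[
|{\bf G}[w]|~\le~2a\theta\, m(t)^2 .
\]
Assumption (\ref{B-Con-0}) is exactly this hypothesis at $t=0$. It is preserved because $m^2$ grows faster than the square root of the energy product; the powers $1/4$ arise precisely here. Under the bootstrap, $-2a(1+\theta)m^2\le m'\le -2a(1-\theta)m^2$. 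Integrating this Riccati inequality gives
\[
\frac{1}{|m(t)|}~\in~\Big[\frac{1}{|m_0|}-2a(1+\theta)t,\ \frac{1}{|m_0|}-2a(1-\theta)t\Big],
\]
so $m\to-\infty$ at a time $T^*$ in the interval (\ref{b-T*-0}), while $u$ itself stays bounded. I expect the delicate part to be closing the bootstrap with the exact exponents and the restriction $\theta\le 1/4$.
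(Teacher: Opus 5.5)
This is the paper's argument in all essentials. You track the steepest slope through a Riccati equation along characteristics; the paper uses $m(t)=2a|\inf_x u_x|$ and transports $w=-2a\,u_x$ along characteristics instead of using a Constantin--Escher minimum point. The energy identities yield a quantity $Z\sim\|u_x\|^{1/2}_{{\bf L}^2}\|u_{xx}\|^{1/2}_{{\bf L}^2}$ with $\dot Z\le\frac32 mZ$ in the paper's normalization. You then bootstrap $Z<\theta m^2$ and integrate $(1-\theta)m^2\le\dot m\le(1+\theta)m^2$. Your way of closing the bootstrap works and gives exactly $\theta\le\frac14$. With your $m=\inf_x u_x$ and $E=\|u_x\|^2_{{\bf L}^2}\|u_{xx}\|^2_{{\bf L}^2}$, integrating $\frac{d}{dt}\log|m|\ge 2a(1-\theta)|m|$ gives $E^{1/4}(t)\le E^{1/4}(0)\,(|m(t)|/|m(0)|)^{3/(2(1-\theta))}$, and $\frac{3}{2(1-\theta)}\le 2$ iff $\theta\le\frac14$. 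The paper encodes the same comparison as monotonicity of $m-\theta^{-1/2}Z^{1/2}$, using $\frac{d}{dt}Z^{1/2}\le\frac34 mZ^{1/2}$ and $\theta+\frac34\le 1$.

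Where your proposal falls short of the statement as written is the bound on ${\bf G}[u_x]$. Your kernel splitting is valid and uses only {\bf (K2)}. However, it gives a constant proportional to $C_K$, so it proves the result only with a threshold of order $(C_K/(a\theta))^{1/2}\|\bar u'\|^{1/4}_{{\bf L}^2}\|\bar u''\|^{1/4}_{{\bf L}^2}$. ``Absorbing $C_K$ by normalization'' is not an argument. The paper instead applies $\|g\|_{{\bf L}^\infty}\le\sqrt2\,\|g\|^{1/2}_{{\bf L}^2}\|g'\|^{1/2}_{{\bf L}^2}$ to $g={\bf G}[u_x]$, using $\partial_x{\bf G}={\bf G}\partial_x$, and then uses ${\bf L}^2$-boundedness of ${\bf G}$. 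Its kernel-free constant comes from tacitly taking $\|{\bf G}\|_{{\bf L}^2\to{\bf L}^2}\le 1$. That holds for the Hilbert and Burgers--Poisson kernels but is not implied by {\bf (K2)}; in general a factor $\|{\bf G}\|^{1/2}$ appears. Even granting that normalization, the condition the argument needs at $t=0$ is $2a|\inf\bar u'|>2^{3/4}a^{1/2}\theta^{-1/2}\|\bar u'\|^{1/4}_{{\bf L}^2}\|\bar u''\|^{1/4}_{{\bf L}^2}$. This is (\ref{B-Con-0}) with $m(0)=2a|\inf\bar u'|$ on the left. It agrees with the printed condition when $a=1/2$ and is implied by it when $a\ge 1/2$, so do not try to match the printed constant for general $a$.

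Lastly, your claim $\max_x u_x\le\max_x\bar u'$ is false in the presence of the nonlocal source, but you do not need it. Pointwise, $-u_x^3\le|\inf u_x|\,u_x^2$ and $-u_xu_{xx}^2\le|\inf u_x|\,u_{xx}^2$. So only the negative part of $u_x$ drives the energy growth, which also shows that $T^*$ is the blow-up time of $\inf_x u_x$.
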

As a consequence of  Theorem \ref{main4}, in Remark \ref{Lset}, we show that for every $T>0$, there exists a quite large open set $\mathcal{B}_T$ of $H^2(\R)$ such that  the Cauchy problem \textup{(\ref{BL})}-(\ref{G})  with initial data $\bar{u} \in \mathcal{B}_T$ exhibits wave breaking before time $T$.
The proof of Theorem \ref{BHG1}, Theorem \ref{main4}, Theorem~\ref{general-f} is subtle yet straightforward and is based on analyzing the blow-up of the quantity
\[
m(t)~\doteq~ \Big|\inf_{x \in \R} [f'(u(t,x))]_x\Big|.
\]
By tracking $[f'(u)]_x$ along the characteristic curves of (\ref{BL})-(\ref{G}) and differentiating the equation with respect to $x$, we obtain a system of ODEs for the ${\bf L}^2$-norms of $u_x$, $u_{xx}$, and $[f'(u)]_x$. 
This system captures the growth of the gradient and allows us to determine precisely when wave breaking occurs, linking it to the initial slope and the influence of the nonlocal term.

\section{Global existence of entropy weak solutions}
\label{sec:2}
\setcounter{equation}{0}
The existence result of entropy weak solutions to the nonlocal balance law (\ref{BL})-(\ref{G}) with initial data in ${\bf L}^{2}(\R)$ is based on a limiting process
for approximating solutions, which are constructed by the flux-splitting method. Toward to the construction, recalling a well-known result (see e.g. in \cite{Bbook}), let  $(S_t)_{t\geq 0}$ be a nonlinear contractive semigroup such that $t\mapsto S_t\bar{v}$ is the unique entropy weak solution to 
\bel{CL}
v_t+f(v)_x~=~0,\qquad\qquad  v(0,\cdot)~=~\bar{v}~\in~{\bf L}^{1}(\R)\cap {\bf L}^{\infty}(\R). 
\eeq
Since $\eta(v)=\ds{v^2\over 2}$ is a convex entropy for (\ref{CL}) and  $q:\R\to\R$ defined by 
\bel{q}
q(0)~=~0,\qquad q(v)~=~\int_{0}^{v}sf'(s)ds\qquad\forall v\in \R\backslash\{0\}
\eeq
is the associated entropy flux, every admissible solution satisfies the following inequality in a distributional sense
\[
\left({v^2\over 2}\right)_t+q(v)_x~\leq~0.
\]
 In particular, for every $\bar{v}\in {\bf L}^{2}(\R)$, the map $t\mapsto \|S_t\bar{v}\|_{{\bf L}^2(\R)}$ is nondecreasing in $[0,+\infty[$, and 
\bel{L2-b}
 \|S_t\bar{v}\|_{{\bf L}^2(\R)}~\leq~\|\bar{v}\|_{{\bf L}^2(\R)}\qquad\forall t\geq 0.
\eeq
Since $f$ is strictly convex, the weak entropy solution $v(t,\cdot)\doteq S_t\bar{v}$ of (\ref{CL}) satisfies a type of Oleinik estimate 
\bel{O}
f'\big(v(t,y)\big)-f'\big(v(t,x)\big)~\leq~{y-x\over t}\qquad\forall y> x,t>0.
\eeq
Thus, $f'\big(v(t,\cdot)\big)$ is in $BV_{\mathrm{loc}}(\R)$ and   at any discontinuous point $x\in \R$ the right and left limits $v(t,x\pm)=\ds \lim_{y\to x\pm}v(t,y)$ exist and 
\[
 \lim_{y\to x+}v(t,y)~=~v(t,x+)~<~v(t,x-)~=~\lim_{y\to x-}v(t,y).
\]
Relying on the  estimates (\ref{L2-b}) and (\ref{O}),  we  shall derive  a fractional BV bound on $v(t,\cdot)$ for all $t>0$.
\begin{definition} 
Let $g:[a,b]\to\R$ be a real-valued function. For every $\gamma\in ]0,1]$, the fractional $\gamma$-total variation of $g$ on $[a,b]$ is defined by 
\[
TV^{\gamma}\{g;[a,b]\}~\doteq~\sup_{a=x_0<x_1< \cdots < x_N=b, N\geq 1} \left(\sum_{i=1}^{N}|g(x_{i})-g(x_{i-1})|^{1/\gamma}\right).
\]
We say that $g$ belongs to $BV^{\gamma}([a,b])$ if and only if $TV^{\gamma}\{g;[a,b]\}<\infty$.
\end{definition}

%
%
\begin{lemma}\label{F-BV} For every $t>0$, $v(t,\cdot)$ is in ${\bf L}^{\infty}(\R)$ with 
\bel{L-b-v}
\|v(t,\cdot)\|_{{\bf L}^{\infty}(\R)}~\leq~M_t~\doteq~4\cdot\left({\|\bar{v}\|^2_{{\bf L}^2(\R)}\over C_ft}\right)^{1/(p_1+2)}\qquad\forall t>0.
\eeq
Moreover,  $v(t,\cdot)$ is in $BV^{1/p_1}_{\mathrm{loc}}(\R)$ and satisfies
\bel{BV-b}
TV^{1/p_1}(v(t,\cdot);[a,b])~\leq~{2\over C_f}\cdot \left({b-a\over t}+\|f'\|_{{\bf L}^{\infty}([-M_t,M_t])}\right)
\eeq
 for every $-\infty<a<b<+\infty$.
\end{lemma}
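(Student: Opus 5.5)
The plan is to combine the Oleinik estimate (\ref{O}) with the lower bound in {\bf (F)}, namely $f'(a+s)-f'(a)\geq C_f s^{p_1}$ for $s>0$, together with the ${\bf L}^2$ bound (\ref{L2-b}).

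\textbf{Step 1 ($L^\infty$ bound).} Fix $t>0$ and a point $x$ where $v(t,x)=h>0$; the case $h<0$ is symmetric. For $y>x$, (\ref{O}) gives $f'(v(t,x))-f'(v(t,y))\geq -(y-x)/t$. Whenever $v(t,y)\leq h/2$, {\bf (F)} (applied with $a=v(t,y)$, $s=h-v(t,y)\ge h/2$) gives $f'(h)-f'(v(t,y))\geq C_f(h/2)^{p_1}$. These two facts are compatible on a long range, so I will use them differently: (\ref{O}) says $f'(v(t,y))\geq f'(h)-(y-x)/t$ for $y<x$. Hence on the interval $y\in[x-\delta,x]$ with $\delta\doteq tC_f(h/2)^{p_1}$ we cannot have $v(t,y)\leq h/2$, since that would force $f'(h)-f'(v(t,y))\geq C_f(h/2)^{p_1}>(x-y)/t$. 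Therefore $v(t,\cdot)\geq h/2$ on an interval of length $\delta$, and
\[
\|\bar v\|^2_{{\bf L}^2}\;\geq\;\|v(t,\cdot)\|^2_{{\bf L}^2}\;\geq\;\frac{h^2}{4}\,tC_f\Big(\frac h2\Big)^{p_1}.
\]
This yields $h^{p_1+2}\leq 2^{p_1+2}\|\bar v\|^2/(C_ft)$, i.e.\ $h\leq 2(\|\bar v\|^2/(C_ft))^{1/(p_1+2)}\leq M_t$. For negative values I will argue the same way to the right of $x$. Since $v$ is only defined a.e., I will run the argument at Lebesgue points or use one-sided limits, which is harmless.

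\textbf{Step 2 (fractional variation).} Take a partition $a=x_0<\dots<x_N=b$ and set $w(y)\doteq f'(v(t,y))-y/t$. By (\ref{O}), $w$ is nonincreasing, so $\sum_i|w(x_i)-w(x_{i-1})|\leq w(a)-w(b)\leq \frac{b-a}{t}+2\|f'\|_{{\bf L}^\infty([-M_t,M_t])}$. From this,
\[
\sum_i|f'(v(x_i))-f'(v(x_{i-1}))|\;\leq\;\frac{b-a}{t}+\sum_i|w(x_i)-w(x_{i-1})|\;\leq\;2\Big(\frac{b-a}{t}+\|f'\|_{{\bf L}^\infty([-M_t,M_t])}\Big).
\]
Finally, {\bf (F)} gives $|f'(u)-f'(w)|\geq C_f|u-w|^{p_1}$, so $\sum_i|v(x_i)-v(x_{i-1})|^{p_1}\leq C_f^{-1}\cdot 2(\cdots)$. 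This is exactly (\ref{BV-b}), with $1/\gamma=p_1$.

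\textbf{Main obstacle.} The main obstacle is Step 1: picking the correct side of $x$ so that (\ref{O}) gives a lower bound on the values near a large positive value (the left side) and an upper bound near a large negative value (the right side). The constant $4$ in $M_t$ leaves enough slack for this.
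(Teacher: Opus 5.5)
Your proposal is correct and is essentially the paper's proof. Step 1 is the same Oleinik-plus-${\bf L}^2$ argument; your threshold $h/2$ even gives the constant $2$ instead of $4$, though the interval should be taken half-open, $]x-\delta,x]$, since strictness can fail at $y=x-\delta$. In Step 2, your nonincreasing function $w(y)=f'(v(t,y))-y/t$ equals $-\xi(0)/t$ for the backward characteristic $\xi$ through $(t,y)$, so its monotonicity is exactly the paper's non-crossing of backward characteristics, expressed directly through (\ref{O}).
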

{\bf Proof.} {\bf 1.} Let $\Phi_f:[0,\infty[\to [0,\infty[$ be a strictly increasing function which is defined by  
\bel{Phi}
\Phi_f(s)~\doteq~\inf_{a \in\R} \big\{f'(a+s)-f'(a)\big\}~\geq~C_f s^{p_1}\qquad\forall s>0.
\eeq
Assume that $\ds {m\over 2}\leq v(t,y_0)\leq m$ for some $y_0\in\R$ and $m>0$. By a type of Oleinik estimate in (\ref{O}) and (\ref{Phi}), we derive
\[
v(t,y_0)-v(t,x)~\leq~\Phi_f^{-1}\left({y_0-x\over t}\right),\qquad x\in (-\infty,y_0].
\]
In particular, for every $y_0-t\cdot\Phi_f({m\over 4})<x<y_0$, it holds 
\[
v(t,x)~\geq~v(t,y_0)-\Phi_f^{-1}\left({y_0-x\over t}\right)~\geq~{m\over 2}-\Phi_f^{-1}\left(\Phi_f\left({m\over 4}\right)\right)~=~{m\over 4}.
\]
Then, recalling (\ref{L2-b}), we get
\[
\|\bar{v}(\cdot)\|^2_{{\bf L}^2}~\geq~\|v(t,\cdot)\|^2_{{\bf L}^2}~\geq~\int^{y_0}_{y_0-t\cdot\Phi_f(m/4)}|v(t,x)|^2dx~\geq~{t{m^2}\over 16}\Phi_f\left({m\over 4}\right)~\geq~{C_ft\over 4^{2+p_1}}\cdot m^{2+p_1},
\]
which yields 
\[
v(t,y)~\leq~M_t\qquad\forall y\in \R.
\]
Similarly, one can show that $v(t,y)\geq-M_t$ and  obtain (\ref{L-b-v}).
\medskip

{\bf 2.} For every given $a<b$, let $\{x_0,x_1,\dots, x_N\}$ be a partition of [a,b]. Without loss of generality, assume that $v$ is continuous at all points $x_i$. Set $\beta_i\doteq v(\tau,x_i) - v(\tau,x_{i-1})$. For every $i\in\{1,\cdots, N\}$, we  define
\[
I^+~=~\{i\in \{1,\dots, N\}: \beta_i\geq 0\},\qquad I^-~=~\{i\in \{1,\dots N\}: \beta_i<0 \}.
\]
From (\ref{O}) and (\ref{Phi}), one gets
\[
\Phi_f(\beta_i)~\leq~f'\big(v(\tau,x_i)\big)-f'\big(v(\tau,x_{i-1})\big)~\leq~{x_i-x_{i-1}\over \tau},\qquad i\in I^+,
\]
and this implies that 
\[
\sum_{i\in I^+}\Phi_f(\beta_i)~\leq~\sum_{i\in I^+}{x_i-x_{i-1}\over \tau}~\leq~{b-a\over \tau}.
\]
On the other hand, we denote by  $\xi_i(\cdot)$ the backward
characteristic line $t\mapsto \xi_i (t)$ through $(\tau,x_i)$ such that 
\[
x_i ~=~\xi_i(t)+f'\big(v(\tau,x_i)\big)\cdot (\tau-t),\qquad u(t,\xi_i(t))~=~v(\tau,x_i),\qquad t\in [0,\tau].
\]
For every $i\in I^{-}$, we have 
\[
0~\leq~x_{i}-x_{i-1}~=~\xi_i(0)-\xi_{i-1}(0)+\big[f'\big(v(\tau,x_i)\big)-f'\big(v(\tau,x_{i-1})\big)\big]\cdot \tau,
\]
and this yields 
\[
\begin{split}
\sum_{i\in I^{-}}\Phi_f(|\beta_i|)&~\leq~\sum_{i\in I^{-}}{\xi_i(0)-\xi_{i-1}(0)\over \tau}~\leq~{\xi_N(0)-\xi_1(0)\over \tau}~\leq~{b-a\over \tau}+2 \|f'\|_{{\bf L}^{\infty}([-M_t,M_t])}.
\end{split}
\]
By (\ref{Phi}), we get 
\[
C_f\cdot \sum_{i=1}^{N}|\beta_i|^{p_1}~\leq~\sum_{i\in \{1,2,\dots,N\}}\Phi_f(\beta_i)~\leq~{2(b-a)\over \tau}+2 \|f'\|_{{\bf L}^{\infty}([-M_t,M_t])},
\]
and this yields (\ref{BV-b}).
\endproof

\begin{remark}\label{L1-1} Assume that $\bar{v}\in {\bf L}^1(\R)$. Then, by the same argument as in the proof of Lemma \ref{F-BV}, we obtain 
\[
\|v(t,\cdot )\|_{{\bf L}^{\infty}(\R)}~\leq~4\cdot\left({\|\bar{v}\|_{{\bf L}^1(\R)}\over C_ft}\right)^{1/(p_1+1)}
\]
for all $t>0$.
\end{remark}

\subsection{Approximate solutions by a flux-splitting method}
Fix an initial datum $\bar{u}\in {\bf L}^2(\R)$. We shall construct a solution of (\ref{BL})-(\ref{G}) for $t\in [0,1]$. By repeating the procedure, the solution can then be prolonged to any time interval $[0,T]$ for $T>0$.
 For every given  integer $\nu\geq 1$, consider the time steps
\[
t_\ell~\doteq~\ell\cdot 2^{-\nu},\qquad\qquad \ell=0,1,2,\dots
\]
The approximate solution of (\ref{BL}) is inductively defined as
\bel{u-nu}
\begin{cases}
u^{\nu}(0,\cdot)~=~\bar{u},\qquad u^{\nu}(t_{\ell},\cdot)~=~u^{\nu}(t_\ell-,\cdot)+2^{-\nu}\cdot {\bf G}[u^{\nu}(t_\ell-,\cdot)],\qquad \ell=1,2,\dots,\\[4mm]
u^{\nu}(t,\cdot)~=~S_{t-t_\ell}\big(u^{\nu}(t_\ell,\cdot)\big),\qquad t\in [t_\ell,t_{\ell+1}[,\quad \ell=0,1,2,\dots
\end{cases}
\eeq
From (\ref{L2-b}), one has  
\[
\|u^{\nu}(t_\ell-,\cdot)\|_{{\bf L}^2(\R)}~=~\left\|S_{2^{-\nu}}\big(u^{\nu}(t_{\ell-1},\cdot)\big)\right\|_{{\bf L}^2(\R)}~\leq~\|u^{\nu}(t_{\ell-1},\cdot)\|_{{\bf L}^2(\R)},
\]
and derives from (\ref{C-G}) that  
\[
\begin{split}
\big\|u^{\nu}(t_\ell,\cdot)\big\|_{{\bf L}^2} & ~\leq~ \bigl(1+2^{-\nu} \, \|{\bf G}\|_{\infty}\bigr) \cdot \|u^{\nu}(t_\ell-,\cdot)\|_{{\bf L}^2(\R)} ~\leq~ \bigl(1+2^{-\nu} \, \|{\bf G}\|_{\infty}\bigr) \cdot \|u^{\nu}(t_{\ell-1},\cdot)\|_{{\bf L}^2(\R)}\\[2mm]
& ~\leq~ \bigl(1+2^{-\nu} \, \|{\bf G}\|_{\infty}\bigr)^{2^{\nu}t_\ell} \cdot \|\bar{u}\|_{{\bf L}^2(\R)}.
\end{split}
\]
Thus, (\ref{L2-b}) yields
\bel{L2-u}
\|u^{\nu}(t,\cdot)\|_{{\bf L}^2(\R)}~\leq~ \bigl(1+2^{-\nu} \, \|{\bf G}\|_{\infty}\bigr)^{2^{\nu}t} \cdot \|\bar{u}\|_{{\bf L}^2(\R)} ~\leq~ e^{\| {\bf G}\|_{\infty}t} \cdot \|\bar{u}\|_{{\bf L}^2(\R)} \qquad\forall t\geq 0.
\eeq
Taking $\nu\to\infty$, we then obtain that 
\bel{e-s-l2}
\limsup_{\nu\to\infty}\|u^{\nu}(t,\cdot)\|_{{\bf L}^2(\R)}~\leq~e^{\| {\bf G}\|_{\infty}t} \cdot\|\bar{u}\|_{{\bf L}^2(\R)}.
\eeq
In the next steps we will show that the sequence of flux-splitting approximations $(u^{\nu})_{\nu\geq 1}$ is precompact and has a convergent subsequence in ${\bf L}^1_{loc}(\R)$. This will follow from the decay properties of entropy weak solutions to (\ref{CL}) with strictly convex flux $f$. In order to do so, we first establish the H\"older continuity on backward generalized characteristics. 
\subsubsection{Generalized characteristics}
For a fixed $\nu>0$ and $\bar{u}\in{\bf L}^{2}(\R)$, we define the generalized characteristics associated to the approximate solution $u^{\nu}$ which is previously constructed. 
\begin{definition}\label{Char} An absolutely continuous map $t\mapsto x(t)$ in $[a,b]$ is called a generalized characteristic associated to the approximate solution $u^{\nu}$ if it satisfies the
differential inclusion
\bel{C-Diff-In}
\dot{x}(t)~\in ~\big[f'\big(u^{\nu}(t,x(t)+)\big), f'\big(u^{\nu}(t,x(t)-)\big)\big]\qquad \text{for a.e.} ~t\in [a,b].
\eeq
A characteristic is called genuine if $u^{\nu}(t,x(t)+)=u^{\nu}(t,x(t)-)$ for almost all $t\in [a,b]$.
\end{definition}
Notice that from  Lemma \ref{F-BV}, the function  $u^{\nu}(t,\cdot)$ is  in $BV^{1/p_1}_{\mathrm{loc}}(\R)$ at any  time  $t\neq t_\ell$. In this case,   the right and left limits in (\ref{C-Diff-In}) are well defined. Moreover, as proved by Dafermos \cite{D}, the minimal and maximal backward characteristics through any given point are always genuine. Next,  we set 
\bel{gp}
\gamma_p~\doteq~{p_2\over 2+p_1}~\in ~ (0,1),
\eeq
and  establish a H\"older continuity on  the backward characteristics.

\begin{lemma}\label{hc-bb-u} 
For every given $\nu>0$,  let $t\mapsto x(t)$ be any characteristic on $[0,1]$ for the approximate solution $u^{\nu}$. For every $0\leq \tau_1<\tau_2\leq 1$, it holds 
\bel{Holder}
|x(\tau_2)-x(\tau_1)|~\leq~\Gamma_1\cdot (\tau_2-\tau_1)^{1-\gamma_p},
\eeq
where the constant $\Gamma_1\geq 0$ is computed by 
\bel{gamma-T}
\Gamma_1~\doteq~C_f\cdot \left[2^{1+p_1}+{2^{4+2p_1}e^{2\|{\bf G}\|_{\infty}}\over C_f}\cdot \big(1 +\|{\bf G}\|_{\infty}+ \|{\bf G}\|^2_{\infty}\big)  \cdot \|\bar{u}\|^2_{{\bf L}^2(\R)}\right]^{\gamma_p}.
\eeq
\end{lemma}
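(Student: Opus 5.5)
Since $|\dot x(t)|\le \sup|f'(u^\nu(t,\cdot))|\le C_f(1+\|u^\nu(t,\cdot)\|_{{\bf L}^\infty})^{p_2}$ by {\bf (F)}, the plan is to bound $\|u^\nu(t,\cdot)\|_{{\bf L}^\infty}$ by a multiple of $(t-t_\ell)^{-1/(p_1+2)}$ and integrate. This alone does not suffice: summing over many short steps would give a bound of order $\sum_\ell 2^{-\nu(1-\gamma_p)}$, which is not uniform in $\nu$. We therefore need an ${\bf L}^\infty$ bound that decays like $t^{-1/(p_1+2)}$ measured from time $0$, or at least from a fixed reference time, and not merely from the last splitting time. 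We then integrate $|\dot x|\le C(1+ s^{-1/(p_1+2)})^{p_2}$ over $[\tau_1,\tau_2]$. Since $\gamma_p=p_2/(p_1+2)<1$, the singular part $s^{-\gamma_p}$ is integrable. Moreover $\int_{\tau_1}^{\tau_2}s^{-\gamma_p}ds\le (1-\gamma_p)^{-1}(\tau_2-\tau_1)^{1-\gamma_p}$, because $s\mapsto s^{1-\gamma_p}$ is concave, hence subadditive. The bounded part contributes $\tau_2-\tau_1\le(\tau_2-\tau_1)^{1-\gamma_p}$ on $[0,1]$. Together these give \eqref{Holder}.

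The core step is a one-sided Oleinik-type estimate for $u^\nu$, uniform in $\nu$. It should replace the argument of Lemma \ref{F-BV}, which uses (\ref{O}) from time $0$. Concretely, consider a point where $u^\nu(t,y_0)\in[m/2,m]$. We want to show that $u^\nu(t,\cdot)\ge m/4$ on an interval $[y_0-\delta,y_0]$ of length $\delta\gtrsim t\,\Phi_f(m/4)$, up to correction terms created by the source kicks. We then use (\ref{L2-u}), $\|u^\nu(t)\|^2_{{\bf L}^2}\le e^{2\|{\bf G}\|_\infty}\|\bar u\|^2_{{\bf L}^2}$, to obtain $m^{p_1+2}\lesssim \|\bar u\|^2_{{\bf L}^2}/(C_f t)$, exactly as in Step 1 of Lemma \ref{F-BV}. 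To control the kicks, I would follow a backward characteristic from $(t,y_0)$. On each interval $[t_{\ell},t_{\ell+1}[$ the value of $u^\nu$ is constant along the characteristic, and at $t_\ell$ it jumps by $2^{-\nu}{\bf G}[u^\nu(t_\ell-)]$. The ${\bf L}^2$ size of these jumps is $2^{-\nu}\|{\bf G}\|_\infty\|u^\nu\|_{{\bf L}^2}$, which is small in an averaged sense but not pointwise.

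To avoid needing pointwise control of ${\bf G}[u]$, I would instead argue in terms of ${\bf L}^2$ mass. Fix a reference time $s=t/2$ and the value $m$. Compare $u^\nu(t)$ on $[y_0-\delta,y_0]$ with $S_{t-s}$ applied to $u^\nu(s)$. The discrepancy is bounded in ${\bf L}^2$ by $(t-s)\|{\bf G}\|_\infty e^{\|{\bf G}\|_\infty}\|\bar u\|$, via the $L^1$-contraction of $S$ combined with Cauchy–Schwarz on the interval and (\ref{L2-u}). The factors $1+\|{\bf G}\|_\infty+\|{\bf G}\|^2_\infty$ and $e^{2\|{\bf G}\|_\infty}$ in \eqref{gamma-T} suggest exactly such squared ${\bf L}^2$ terms. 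I expect this step to be the main obstacle: converting a mass estimate on a short interval, plus an ${\bf L}^2$-small perturbation, into the lower bound $m^{2+p_1}\le 4^{2+p_1}\,[\cdots]/(C_f t)$. I would first try the direct route. Apply Lemma \ref{F-BV} to the exact semigroup piece started at $t_\ell$, but with initial ${\bf L}^2$ norm $\le e^{\|{\bf G}\|_\infty}\|\bar u\|$. Then observe that the time elapsed since $t_\ell$ can be replaced by $t$, because the Oleinik inequality (\ref{O}) is only perturbed by the kicks. Each kick changes $f'(u)$ differences by an amount controlled through the $x$-integrated effect on characteristics, giving the quadratic ${\bf G}$-terms.

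With the bound $\|u^\nu(t)\|_{{\bf L}^\infty}\le 4\big(A/(C_f t)\big)^{1/(p_1+2)}$ in hand, where $A$ is the bracket in \eqref{gamma-T} divided appropriately, we have $|\dot x|\le C_f 2^{p_2}\big(1+\|u^\nu\|_\infty^{p_2}\big)$, which for $t\le1$ is at most $C_f[\,2^{1+p_1}+ 4^{\cdots}A\,]^{\gamma_p}t^{-\gamma_p}$. Integrating as in the first paragraph yields \eqref{Holder} with the stated $\Gamma_1$.
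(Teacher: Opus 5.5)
Your plan has a genuine gap at its core step. The argument rests on a $\nu$-uniform bound $\|u^\nu(t,\cdot)\|_{{\bf L}^\infty}\lesssim t^{-1/(p_1+2)}$ along the whole time interval, but this bound is never proved, and the mechanism you propose for it does not work.

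\begin{itemize}
\item ${\bf G}$ is bounded on ${\bf L}^2$ but not on ${\bf L}^\infty$. For $K(x)=1/(\pi x)$, the kick $2^{-\nu}{\bf G}[u^\nu(t_\ell-,\cdot)]$ has logarithmic singularities at the shocks of $u^\nu(t_\ell-,\cdot)$. So $u^\nu(t_\ell,\cdot)$ need not be bounded, and (\ref{O}) is not ``only perturbed'' by a kick but can fail outright across it. No bound of your form can hold for $t$ just after $t_\ell$.
\item The best available statement is a bound at the times $t_\ell-$. That is Lemma \ref{L-infty-b}, and its proof uses the present lemma: the widths $\Lambda_n$ in (\ref{Car1}) and the Cauchy--Schwarz estimate of $J^+(t_\ell)-J^-(t_\ell)$ both rely on (\ref{Holder}). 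Your route therefore inverts the paper's logical order.
\item Your ``mass comparison'' with $S_{t-s}u^\nu(s)$ has the same circularity. ${\bf L}^1$-contraction gives no ${\bf L}^2$ control, since $S_t$ is not ${\bf L}^2$-contractive. Bounding $\int|{\bf G}[u^\nu]|$ over a backward domain of dependence via Cauchy--Schwarz requires the width of that domain, which is exactly (\ref{Holder}); compare (\ref{CR0}).
\item Even granting the bound, integrating $s^{-\gamma_p}$ introduces a factor $(1-\gamma_p)^{-1}$, so you would not recover the stated $\Gamma_1$.
\end{itemize}

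The missing idea is to avoid pointwise bounds entirely and measure the ${\bf L}^2$-energy flux across the characteristic.

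\begin{itemize}
\item The paper splits $\bar u=\bar u^++\bar u^-$ and each kick into positive and negative parts. This gives one-signed approximations with $u^{\nu,-}\le u^\nu\le u^{\nu,+}$.
\item The minimal backward characteristics $x^\pm$ of $u^{\nu,\pm}$ through $(\tau_2,x(\tau_2))$ satisfy $x^+\le x\le x^-$. Hence $x^-(\tau_2)-x^-(\tau_1)\le x(\tau_2)-x(\tau_1)\le x^+(\tau_2)-x^+(\tau_1)$, so it suffices to bound the increments of $x^\pm$ in the appropriate direction.
\item Apply the divergence theorem to $\eta=u^2/2$, $q=\int_0^u sf'(s)\,ds$ on $\{z\ge x^+(s)\}$. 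The kicks enter only through their ${\bf L}^2$ increments, which is where $e^{2\|{\bf G}\|_\infty}(1+\|{\bf G}\|_\infty+\|{\bf G}\|^2_\infty)$ comes from. One obtains $\int_{\tau_1}^{\tau_2}[\eta f'-q]\big(u^{\nu,+}(s,x^+(s))\big)\,ds\le C\|\bar u\|^2_{{\bf L}^2}$.
\item For $\omega\ge 0$, the convexity bound in {\bf (F)} gives $\eta(\omega)f'(\omega)-q(\omega)\ge C_f2^{-3-p_1}\omega^{2+p_1}$. The growth bound then shows this dominates $c\,(|f'(\omega)|^{1/\gamma_p}-1)$.
\item Hence $\int_{\tau_1}^{\tau_2}|\dot x^+|^{1/\gamma_p}\,ds\le C$, and Jensen's inequality gives $(\Delta x/\Delta t)^{1/\gamma_p}\lesssim 1/\Delta t$, which is (\ref{Holder}).
\end{itemize}

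The one-signed splitting is essential, because $\eta f'-q<0$ for $\omega<0$, so a signed solution gives no control of the flux across a single characteristic.
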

{\bf Proof.} Consider the positive and negative part of the initial data
\[
\bar{u}(x)~=~\bar{u}^+(x)+\bar{u}^{-}(x)\qquad\mathrm{with}\qquad \begin{cases}
\bar{u}^{+}(x)~=~\max\{\bar{u}(x),0\},\\[3mm]
\bar{u}^{-}(x)~=~\min\{\bar{u}(x),0\}.
\end{cases}
\]
Splitting  the source term into its positive and negative parts  
\[
G^{\nu}(t_\ell,x)~\doteq~ {\bf G}[u^{\nu}(t_\ell-,\cdot)](x)~=~G^{\nu,+}(t_\ell,x)+G^{\nu,-}(t_\ell,x),
\]
we  define the functions $u^{\nu,+},u^{\nu,-}$ inductively by setting
\[
\begin{cases}
u^{\nu,\pm}(0,\cdot)~=~\bar{u}^{\pm}(\cdot),\qquad u^{\nu,\pm}(t_\ell,\cdot)~=~u^{\nu,\pm}(t_\ell-,\cdot)+2^{-\nu}\cdot G^{\nu,\pm}(t_\ell,\cdot), \\[3mm]
u^{\nu,\pm}(t,\cdot)~\doteq~S_{t-t_\ell}(u^{\nu,\pm}(t_\ell,\cdot)),\qquad t\in [t_\ell,t_{\ell+1}[.
\end{cases}
\]
By a standard comparison theorem for scalar conservation laws, we get  
\[
u^{\nu,-}(t,\cdot)~\leq~\min\{0,u^{\nu}(t,\cdot)\}~\leq~\max\{0,u^{\nu}(t,\cdot)\}~\leq~u^{\nu,+}(t,\cdot).
\]
Moreover, since $\big\|G^{\nu,\pm}(t_\ell,\cdot)\big\|_{{\bf L}^2(\R)}\leq \|{\bf G}[u^{\nu}(t_\ell-,\cdot)]\|_{{\bf L}^2(\R)}\leq \|{\bf G}\|_{\infty}\cdot e^{\|{\bf G}\|_{\infty}t_\ell}\|\bar{u}\|_{{\bf L}^2(\R)}$, it holds
\[
\big\|u^{\nu,\pm}(t,\cdot)\big\|_{{\bf L}^2(\R)}~\leq~e^{\| {\bf G}\|_{\infty}t} \cdot\|\bar{u}\|_{{\bf L}^2(\R)},\qquad t\geq 0.
\]
Calling $t\mapsto x^{\pm}(t)$  the minimal backward characteristic for $u^{\nu,\pm}$, through the point $\big(\tau_2,x(\tau_2)\big)$, we have that 
\[
x^{+}(t)~\leq~x(t)~\leq~x^{-}(t),\qquad t\in [0,\tau_2].
\]
This particularly yields 
\bel{hol}
x^{-}(\tau_2)-x^{-}(\tau_1)~\leq~x(\tau_2)-x(\tau_1)~\leq~x^{+}(\tau_2)-x^{+}(\tau_1)
\eeq
Two cases are considered: 
\medskip

{\bf $\bullet$ Case 1.} If $x^{-}(\tau_2)-x^{-}(\tau_1)\geq 0$ then 
\[
|x(\tau_2)-x(\tau_1)|~\leq~x^{+}(\tau_2)-x^{+}(\tau_1).
\]
To bound the difference  $x^{+}(\tau_2)-x^{+}(\tau_1)$, we shall estimate the integral of $\left[u^{\nu,+}\right]^2$ over the domain $\{(s,z):s\in [\tau_1,\tau_2],z\geq x^+(s)\}$ by using the divergence theorem for $(\eta,q)$ as follows: 
\[
\begin{split}
0&~\leq~{1\over 2}\int^{\infty}_{x^+(\tau_2)}\left[u^{\nu,+}(\tau_2,z)\right]^2dz~=~\int^{\infty}_{x^+(\tau_2)}\eta\big(u^{\nu,+}(\tau_2,z)\big)dz\\
&~\leq~\int^{\infty}_{x^+(\tau_1)}\eta\big(u^{\nu,+}(\tau_2,z)\big)dz+\sum_{\tau_1<t_\ell\leq \tau_2}\int^{\infty}_{x^+(t_{\ell})}\bigg(\eta\big(u^{\nu,+}(t_\ell,z)\big)-\eta\big(u^{\nu,+}(t_\ell-,z)\big)\bigg)dz\\
&\qquad\qquad\qquad +\int_{\tau_1}^{\tau_2}\bigg(q\Big(u^{\nu,+}\big(s,x^+(s)\big)\Big)-\eta\Big(u^{\nu,+}\big(s,x^{+}(s)\big)\Big)\cdot \dot{x}^+(s)\bigg)ds\\
&~\leq~{e^{2\|{\bf G}\|_{\infty}\tau_2}\over 2}\cdot \|\bar{u}\|_{{\bf L}^2(\R)}^2+{1\over 2}\cdot \sum_{\tau_1<t_\ell\leq \tau_2} e^{2\|{\bf G}\|_{\infty}t_\ell}\left(2^{1-\nu}\cdot \|{\bf G}\|_{\infty}+2^{-2\nu}\|{\bf G}\|^2_{\infty}\right)\cdot \|\bar{u}\|^2_{{\bf L}^2(\R)} \\
&\qquad\qquad\qquad\qquad\qquad\qquad +\int_{\tau_1}^{\tau_2}\bigg(q\Big(u^{\nu,+}\big(s,x^+(s)\big)\Big)-\eta\Big(u^{\nu,+}\big(s,x^{+}(s)\big)\Big)\cdot \dot{x}^+(s)\bigg)ds\\
&~\leq~ \big(1 +\|{\bf G}\|_{\infty}+ \|{\bf G}\|^2_{\infty}\big) e^{2\|{\bf G}\|_{\infty}}  \|\bar{u}\|^2_{{\bf L}^2(\R)} + \int_{\tau_1}^{\tau_2}\bigg(q\Big(u^{\nu,+}\big(s,x^+(s)\big)\Big)-\eta\Big(u^{\nu,+}\big(s,x^+(s)\big)\Big)\cdot \dot{x}^+(s)\bigg)ds,
\end{split}
\]
which implies that 
\[
\begin{split}
\int_{\tau_1}^{\tau_2}\bigg[\eta\Big(u^{\nu,+}\big(s,x^{+}(s)\big)\Big)\cdot f'\Big(u^{\nu,+}\big(s,x^{+}(s)\big)\Big)&-q\Big(u^{\nu,+}\big(s,x^+(s)\big)\Big)\bigg]ds\\
&\leq \big( 1 + \|{\bf G}\|_{\infty}+ \|{\bf G}\|^2_{\infty}\big) e^{2\|{\bf G}\|_{\infty}} \cdot \|\bar{u}\|^2_{{\bf L}^2(\R)}.
\end{split}
\]
Recalling (\ref{q}) and {\bf (F)}, for every $\omega\geq 0$ we have  
\[
\begin{split}
\eta(\omega)f'(\omega)-q(\omega)&~=~\ds{\omega^2f'(\omega)\over 2} - \int_{0}^{\omega}sf'(s)ds~=~{1\over 2}\cdot\int_{0}^{\omega}{s^2}f''(s)ds\\
&~\geq~{\omega^{2}\over 8}\int_{\omega/2}^\omega f''(s)ds~=~{\omega^2\over 2^{3}} \big[f'(\omega)-f'(\omega/2)\big]~\geq~{C_f\over 2^{3+p_1}}\cdot \omega^{2+p_1}\\
&~\geq~{C_f\over 2^{3+p_1}}\cdot \left[{(1+\omega)^{2+p_1}\over 2^{1+p_1}}-1\right]~\geq~{C_f\over 2^{3+p_1}}\cdot\left({|f'(\omega)|^{1/\gamma_p}\over C_f^{1/\gamma_p}2^{1+p_1}}-1\right),
\end{split}
\]
and derive from the above estimate that 
\bel{uet1}
\begin{split}
\int_{\tau_1}^{\tau_2}\bigg({\big|\dot{x}^+(s)\big|^{1/\gamma_p}\over C_f^{1/\gamma_p}2^{1+p_1}}-1\bigg)~ds
&~\leq~ {2^{3+p_1}e^{2\|{\bf G}\|_{\infty}}\over C_f}\cdot \big(1 + \|{\bf G}\|_{\infty}+ \|{\bf G}\|^2_{\infty}\big)  \cdot \|\bar{u}\|^2_{{\bf L}^2(\R)}.
\end{split}
\eeq
By Jensen's inequality, we estimate 
\[
{1\over \tau_2-\tau_1}\int_{\tau_1}^{\tau_2}\big|\dot{x}^+(s)\big|^{1/\gamma_p}ds~\geq~\left({1\over \tau_2-\tau_2}\int_{\tau_1}^{\tau_2}|\dot{x}^+(s)|ds\right)^{1/\gamma_p}~\geq~\left({x^+(\tau_2)-x^+(\tau_1)\over \tau_2-\tau_1}\right)^{1/\gamma_p}.
\]
From (\ref{uet1}), we have 
\[
\begin{split}
{1\over 2^{1+p_1}}\left({x^+(\tau_2)-x^+(\tau_1)\over C_f(\tau_2-\tau_1)}\right)^{1/\gamma_p}&~\leq~{2^{3+p_1}e^{2\|{\bf G}\|_{\infty}}\over C_f}\cdot \big (1 + \|{\bf G}\|_{\infty}+ \|{\bf G}\|^2_{\infty}\big)  \cdot \|\bar{u}\|^2_{{\bf L}^2(\R)}\cdot {1\over \tau_2-\tau_1}+1\\[2mm]
&~\leq~\left(1+{2^{3+p_1}e^{2\|{\bf G}\|_{\infty}}\over C_f}\cdot \big( 1 + \|{\bf G}\|_{\infty}+ \|{\bf G}\|^2_{\infty}\big)  \cdot \|\bar{u}\|^2_{{\bf L}^2(\R)}\right)\cdot {1\over \tau_2-\tau_1},
\end{split}
\]
and this yields 
\[
x^+(\tau_2)-x^+(\tau_1)~\leq~\Gamma_1\cdot (\tau_2-\tau_1)^{1-\gamma_p}.
\]
Thus, (\ref{Holder}) holds.
\medskip 

{\bf $\bullet$ Case 2:} If $x^{+}(\tau_2)-x^{+}(\tau_1)\leq 0$ then 
\[
|x(\tau_2)-x(\tau_1)|~\leq~x^{-}(\tau_1)-x^{-}(\tau_2).
\]
With the same argument as in {\bf Case 1}, we get 
\begin{multline*}
\int_{\tau_1}^{\tau_2}\bigg[-\eta\Big(u^{\nu,-}\big(s,x^{-}(s)\big)\Big)\cdot f'\Big(u^{\nu,-}\big(s,x^{-}(s)\big)\Big)+q\Big(u^{\nu,-}\big(s,x^-(s)\big)\Big)\bigg]ds\\
~\leq~\big(1+\|{\bf G}\|_{\infty}+ \|{\bf G}\|^2_{\infty}\big)\cdot e^{2\|{\bf G}\|_{\infty}}\cdot \|\bar{u}\|^2_{{\bf L}^2(\R)}.
\end{multline*}
Recalling (\ref{q}) and  the assumption {\bf (F)} on $f$, we also derive 
\[
\int_{\tau_1}^{\tau_2}\bigg({|\dot{x}^-(s)\big|^{1/\gamma_p}\over C_f^{1/\gamma_p}2^{1+p_1}}-1\bigg)~ds~\leq~ {2^{3+p_1}e^{2\|{\bf G}\|_{\infty}}\over C_f} \cdot \big(1 + \|{\bf G}\|_{\infty}+ \|{\bf G}\|^2_{\infty}\big)  \cdot \|\bar{u}\|^2_{{\bf L}^2(\R)}.
\]
Again, by Jensen's inequality, we estimate 
\[
{1\over \tau_2-\tau_1}\int_{\tau_1}^{\tau_2}\big|\dot{x}^-(s)\big|^{1/\gamma_p}ds~\geq~\left({1\over \tau_2-\tau_2}\int_{\tau_1}^{\tau_2}|\dot{x}^-(s)|ds\right)^{1/\gamma_p}~\geq~\left({x^-(\tau_1)-x^-(\tau_2)\over \tau_2-\tau_1}\right)^{1/\gamma_p},
\]
and (\ref{Holder}) also  holds in this case. 
\endproof

\subsubsection{${\bf L}^{\infty}$-bound} Toward to  an ${\bf L}^{\infty}$-bound for our constructed  entropy weak solution to (\ref{BL}), for a given $\nu\in\mathbb{Z}^+$, we shall provide an a priori ${\bf L}^{\infty}$-bound for  the sequence of approximate solutions $u^{\nu}$ at times $\ell 2^{-\nu}$ for all 
$\ell\in \{1,\cdots, 2^{\nu}\}$.
\begin{lemma}\label{L-infty-b} There exists a constant $\Gamma_2>0$ such that for every $\nu\in \mathbb{Z}^+$ and $T=\ell_02^{-\nu}$ for $\ell_0\in \{1,\cdots, 2^{\nu}\}$, it holds 
\bel{L-infty}
\left\|u^{\nu}(T-,\cdot)\right\|_{{\bf L}^{\infty}(\R)}~\leq~ \Gamma_2\cdot \left(1+{1\over T^{1/(p_1+2)}}\right).
\eeq
\end{lemma}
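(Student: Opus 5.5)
To prove (\ref{L-infty}), I will adapt Step 1 of Lemma \ref{F-BV}: a large value at a point forces, through an Oleinik-type one-sided bound, a large value on a whole interval, which contradicts the ${\bf L}^2$ bound (\ref{L2-u}). The difficulty is that $u^\nu$ is not a single solution of (\ref{CL}) on $[0,T]$. It is restarted at every $t_\ell$, with the jump $2^{-\nu}{\bf G}[u^\nu(t_\ell-)]$, and this jump is controlled only in ${\bf L}^2$, not pointwise. So instead of the Oleinik estimate I will use backward characteristics together with an energy balance, exactly as in the proof of Lemma \ref{hc-bb-u}.

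Fix $T=\ell_0 2^{-\nu}$ and assume $m/2\le u^\nu(T-,y_0)\le m$ for some $m>0$; negative values are handled symmetrically. By the comparison $u^\nu\le u^{\nu,+}$, it suffices to bound $u^{\nu,+}$. Take the minimal backward characteristic $x^+(\cdot)$ through $(T,y_0)$ for $u^{\nu,+}$.

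The first step is to show that $u^{\nu,+}$ stays comparable to $m$ along $x^+$ for a fixed fraction of time. Set $\tau=\min\{T,\,T_0\}$ with $T_0$ depending only on $m$ and the data, and consider the interval $[T-\tau,T]$. On each subinterval $[t_\ell,t_{\ell+1})$ the characteristic is genuine, so $u^{\nu,+}$ is constant along it. At each $t_\ell$ the value jumps by $2^{-\nu}G^{\nu,+}(t_\ell,x^+(t_\ell))\ge0$. Since these jumps are nonnegative, going backward in time $u^{\nu,+}$ along $x^+$ can only decrease, and I need to rule out that it decreases a lot.

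The second step carries out the energy balance. I integrate $\eta=u^2/2$ over $\{s\in[T-\tau,T],\ z\ge x^+(s)\}$, as in the proof of Lemma \ref{hc-bb-u}. This gives
\[
\int_{T-\tau}^{T}\big[\eta f'-q\big]\big(u^{\nu,+}(s,x^+(s))\big)\,ds~\le~C\|\bar u\|^2_{{\bf L}^2(\R)},
\]
where $\eta f'-q\ge c\,\omega^{2+p_1}$.

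The third step combines this with the Oleinik picture. Just before $t_{\ell+1}$, the function $u^{\nu,+}(t_{\ell+1}-,\cdot)$ satisfies (\ref{O}) with time $2^{-\nu}$, so that bound is too weak on its own. I therefore plan a sharper argument. The level set $\{u^{\nu,+}(T-,\cdot)\ge m/4\}$ near $y_0$ must be short by (\ref{L2-u}), since its length is at most $16e^{2\|{\bf G}\|}\|\bar u\|^2/m^2$. Meanwhile, the lower characteristic bound from the energy estimate combined with the Hölder bound (\ref{Holder}) on characteristics gives a cone of influence. Concretely, I will compare the backward characteristics from $y_0$ and from $y_0-\delta$. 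Before meeting they cannot cross, and their speeds differ by at least $\Phi_f(m/4)$ while the values differ by at least $m/4$. This forces either a long interval where $u\ge m/4$, which is excluded by the ${\bf L}^2$ bound, or an integrated speed that is large over $[T-\tau,T]$. The latter contradicts the energy integral once $m^{2+p_1}\tau\gg \|\bar u\|^2$.

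Choosing $\tau\sim\min\{T,1\}$ then yields $m^{p_1+2}\lesssim C(1+1/T)$, which is (\ref{L-infty}) with $\Gamma_2$ depending on $C_f$, $\|{\bf G}\|_\infty$ and $\|\bar u\|_{{\bf L}^2}$.

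The main obstacle is the third step: controlling how much the source jumps can lower $u$ backward along characteristics, since ${\bf G}[u]$ is only ${\bf L}^2$. My first attempt will be to integrate those jumps along the characteristic in the ${\bf L}^2$ sense, using the fact that the characteristic's position is Hölder-controlled so that it samples the source only on a set of small measure. I will then argue via the energy identity rather than pointwise.
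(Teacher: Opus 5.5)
Your second step, the energy balance along the minimal backward characteristic, is correct and is the computation from Lemma \ref{hc-bb-u}. It would give the right rate $m\lesssim\tau^{-1/(p_1+2)}$ \emph{provided} you knew $u^{\nu,+}(s,x^+(s))\ge m/4$ on $[T-\tau,T]$, with $\tau$ comparable to $\min\{T,1\}$ uniformly in $m$ and $\nu$. That lower bound is the whole content of the lemma, and neither your first nor your third step supplies it, so there is a genuine gap.

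Going backward along the single curve $x^+$, the value drops by $\sum_\ell 2^{-\nu}G^{\nu,+}(t_\ell,x^+(t_\ell))$, essentially $\int G(s,x^+(s))\,ds$. Since ${\bf G}[u^\nu(t_\ell-,\cdot)]$ is controlled only in ${\bf L}^2$, and a singular integral of a bounded function need not be bounded, its values on a curve are not controlled at all. The H\"older bound on $x^+$ does not help, because a curve has measure zero.

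Your cone argument has the same flaw. That the backward characteristics from $y_0$ and $y_0-\delta$ have speeds differing by at least $\Phi_f(m/4)$ is known only at time $T$; at earlier times those speeds are exactly what the source jumps may have changed. The dichotomy ``long level set or large integrated speed'' does not follow either. The value along $x^+$ may be small on most of $[T-\tau,T]$ and climb to $m$ only on a short final interval $[T-\ve,T]$. Then the energy integral gives only $\ve\, m^{p_1+2}\lesssim\|\bar u\|^2_{{\bf L}^2}$, which is no contradiction. Relatedly, letting $T_0$ depend on $m$ and then taking $\tau\sim\min\{T,1\}$ is inconsistent.

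The missing idea is to measure the injected source in ${\bf L}^1$ over a \emph{tube} between two backward characteristics, rather than along one characteristic, and to do this on dyadic time scales.

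\begin{itemize}
\item The paper sets $\tau_n=T(1-2^{-n})$. It lets $u_n$ equal $u^{\nu,+}$ on $[0,\tau_n)$ and $S_{t-\tau_n}u^{\nu,+}(\tau_n,\cdot)$ afterwards.
\item $\|u_1(T-,\cdot)\|_{{\bf L}^\infty}$ is bounded by Lemma \ref{F-BV} with time $T/2$; this gives the $T^{-1/(p_1+2)}$ term. Also $u_{n_0}(T-,\cdot)=u^{\nu,+}(T-,\cdot)$ once $T2^{-n_0}<2^{-\nu}$.
\item Suppose the sup norm of $u_n(T-,\cdot)$ exceeds that of $u_{n-1}(T-,\cdot)$ by $2^{-n}+\theta_n$ with $\theta_n>0$. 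The Oleinik bound for $u_n$ on the pure conservation-law interval $[\tau_n,T]$, of length $T2^{-n}$, gives an interval $I_n$ of length $T2^{-n}\Phi_f(\theta_n/2)$ on which $u_n-u_{n-1}\ge\theta_n/2$.
\item The divergence theorem between backward characteristics from the endpoints of $I_n$ bounds $\int_{I_n}(u_n-u_{n-1})\,dx$ by the mass injected during $[\tau_{n-1},\tau_n]$. The separation of those characteristics is at most $\Lambda_n$ by Lemma \ref{hc-bb-u}. By Cauchy--Schwarz on the tube, this mass is $\lesssim T2^{-n}\|{\bf G}\|_\infty e^{\|{\bf G}\|_\infty}\|\bar u\|_{{\bf L}^2}\Lambda_n^{1/2}$.
\item Hence $\theta_n\Phi_f(\theta_n/2)\lesssim\Lambda_n^{1/2}$. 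So $\theta_n$ decays geometrically in $n$ and the telescoping sum converges.
\end{itemize}

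Some multi-scale ${\bf L}^1$-in-a-tube estimate of this kind is needed. Your single-characteristic argument, as proposed, does not close.
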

{\bf Proof.} {\bf 1.} Fix $\nu>0$  and $T=\ell_02^{-\nu}$ for some $\ell_0\in\{1,\cdots,2^{\nu}\}$. We shall establish an upper bound on $u^{\nu,\pm}(T-,\cdot)$ which is defined in the proof of Lemma \ref{hc-bb-u} such that 
\[
u^{\nu,-}(T-,\cdot)~\leq~u^{\nu}(T-,\cdot)~\leq~u^{\nu,+}(T-,\cdot).
\]
Let  $\big\{u_n :[0,T]\times\R\to\R\big\}_{n\geq 1}$ be a non-decreasing  sequence of functions which are defined by 
\bel{T-un}
u_n(t,\cdot)~=~\begin{cases}
u^{\nu,+}(t,\cdot),\quad t\in [0,\tau_n[,\\[2mm]
S_{t-\tau_n}u^{\nu,+}(\tau_n,\cdot),\quad t\in [\tau_n,T],
\end{cases}
\quad \tau_n~=~T(1-2^{-n}).
\eeq
By Lemma \ref{F-BV} and (\ref{L2-u}), we have 
\bel{et1}
\begin{split}
\|u_1(T-,\cdot)\|_{{\bf L}^{\infty}(\R)}&~\leq~\left\|S_{T/2}u^{\nu,+}(T/2,\cdot)\right\|_{{\bf L}^{\infty}(\R)}~\leq~4\cdot\left({\big\|u^{\nu,+}(T/2,\cdot)\big\|^2_{{\bf L}^2(\R)}\over C_f T/2}\right)^{1/(p_1+2)}\\[3mm]
&~\leq~4\left({e^{\| {\bf G}\|_{\infty}T}  \|\bar{u}\|^{2}_{{\bf L}^2(\R)}
\over C_f T/2}\right)^{1/(p_1+2)}.
\end{split}
\eeq
Let $n_0\in \mathbb{N}$ be such that 
\bel{n0}
T2^{-n_0}~<~2^{-\nu}~\leq ~T2^{-n_0+1}.
\eeq
From the construction (\ref{T-un}), it holds
\[
u_{n_0}(T-,\cdot)~=~u^{\nu,+}(T-,\cdot).
\]
This particularly yields
\bel{sum1}
\begin{split}
\|u^{\nu,+}&(T-,\cdot)\|_{{\bf L}^{\infty}(\R)}~\leq~\|u_{1}(T-,\cdot)\|_{{\bf L}^\infty(\R)}+\sum_{n=2}^{n_0}\left(\|u_{n}(T-,\cdot)\|_{{\bf L}^\infty(\R)}-\|u_{n-1}(T-,\cdot)\|_{{\bf L}^\infty(\R)}\right)\\
&~\leq~4\left({e^{ \|{\bf G}\|_{\infty}T}  \|\bar{u}\|^{2}_{{\bf L}^2(\R)}
\over C_f T/2}\right)^{\frac{1}{p_1+2}}+\sum_{n=2}^{n_0}\big(\|u_{n}(T-,\cdot)\|_{{\bf L}^\infty(\R)}-\|u_{n-1}(T-,\cdot)\|_{{\bf L}^\infty(\R)}\big).
\end{split}
\eeq

{\bf 2.} Next, for every  $2\leq n\leq n_0$, we  provide an upper bound on  $\|u_{n}(T-,\cdot)\|_{{\bf L}^\infty(\R)}-\|u_{n-1}(T-,\cdot)\|_{{\bf L}^\infty(\R)}$. Assume that  
\bel{te-a}
\theta_n~\doteq~\|u_n(T-,\cdot)\|_{{\bf L}^{\infty}(\R)}-\|u_{n-1}(T-,\cdot)\|_{{\bf L}^{\infty}(\R)}-2^{-n}~>~0.
\eeq
Then there exists a point $x_{n}\in\R$ such that 
\[
u_{n}(T-,x_{n})~\geq~\|u_{n-1}(T-,\cdot)\|_{{\bf L}^{\infty}(\R)}+\theta_n.
\]
Set $ I_n\doteq \big[x_n-T2^{-n}\Phi_f(\theta_n/2),x_n\big]$. By  (\ref{O}) and (\ref{Phi}), we derive
\[
u_n(T-,x_{n})-u_n(T-,x)~\leq~\Phi_f^{-1}\left({2^n\over T}\cdot (x_{n}-x)\right)~\leq~{\theta_n\over 2}\quad\forall  x\in I_n,
\]
and 
\bel{L1-b1}
\begin{split}
\int_{I_n}u_n(T-,x)-u_{n-1}&(T-,x)dx~\geq~\int_{I_n}u_n(T-,x_{n})-{\theta_n\over 2}-\|u_{n-1}(T-,\cdot)\|_{{\bf L}^{\infty}(\R)}dx\\
&~\geq~\int_{I_n}{\theta_n\over 2}dx~=~T2^{-n-1}\theta_n\Phi_f(\theta_n/2).
\end{split}
\eeq
Let $t\mapsto \xi_1(t)$ and  $t\mapsto \xi_2(t)$ be the minimal  backward characteristics of $u_{n}$ and $u_{n-1}$ starting from  $\big(x_n-T2^{-n}\Phi_f(\theta_n/2),T\big)$ and $(x_n,T)$, respectively, which are defined as in Definition \ref{Char}. By Lemma \ref{hc-bb-u}, (\ref{T-un}), and the assumption {\bf (F)} on $f$, we estimate  
\bel{Car1}
\begin{split}
\sup_{t\in [\tau_{n-1},T]}|\xi_1(t)-\xi_2(t)|&~\leq~\sup_{t\in [\tau_{n-1},T]}\big\{|I_n|+|\xi_1(t)-\xi_1(T)|+|\xi_2(t)-\xi_2(T)|\big\}\\[2mm]
&~=~T2^{-n}\Phi_f(\theta_n/2)+2^{1+(1-n)(1-\gamma_p)}\Gamma_1T^{1-\gamma_p}\doteq~\Lambda_n.
\end{split}
\eeq
Consider the domain 
\[
\Omega_{0}~=~\big\{(x,t)\in \R\times [\tau_{n},T]: \xi_1(t)\leq x\leq \xi_2(t)\big\}.
\]
By the divergence theorem and the convexity of $f$, we estimate  
\[
\begin{split}
0&~=~\int_{\Omega_{0}}\Big(\mathrm{div}\big(f(u_{n}),u_n\big)-\mathrm{div}\big(f(u_{n-1}),u_{n-1}\big)\Big)dxdt\\
&~=~\int_{I_n}\Big(u_n(T-,x)-u_{n-1}(T-,x)\Big)dx-\int_{\tau_{n}}^{T}\big[f(u_n) -f(u_{n-1})- f'(u_{n}) (u_n-u_{n-1})\big](\xi_1(t),t)dt\\
&\qquad\qquad~+~\int_{\tau_{n}}^{T}\big[f(u_n)-f(u_{n-1})-f'(u_{n-1})(u_n-u_{n-1})\big](\xi_2(t),t)dt \\
&\qquad\qquad~-\int^{\xi_2(\tau_n)}_{\xi_1(\tau_n)}\Big(u_n(\tau_n+,x)-u_{n-1}(\tau_n+,x)\Big)dx\\
&~\geq~\int_{I_n}\Big(u_n(T-,x)-u_{n-1}(T-,x)\Big)dx-\int^{\xi_2(\tau_n)}_{\xi_1(\tau_n)}\Big(u_n(\tau_n+,x)-u_{n-1}(\tau_n+,x)\Big)dx,
\end{split}
\]
and this implies  that 
\bel{un-11}
\int_{I_n}\Big(u_n(T-,x)-u_{n-1}(T-,x)\Big)dx~\leq~\int^{\xi_2(\tau_n)}_{\xi_1(\tau_n)}\Big(u_n(\tau_n+,x)-u_{n-1}(\tau_n+,x)\Big)dx.
\eeq
Similarly, for every $\tau_{n-1}\leq s<t\leq \tau_n$ such that $\big\{\ell\in \mathbb{N}: s<\ell2^{-\nu}<t\big\}=\emptyset$, 
it holds 
\[
J^-(t)~\doteq~\int^{\xi_2(t)}_{\xi_{1}(t)}\Big(u_n(t-,x)-u_{n-1}(t-,x)\Big)dx~\leq~\int_{\xi_1(s)}^{\xi_2(s)}\Big(u_n(s+,x)-u_{n-1}(s+,x)\Big)dx~\doteq~J^{+}(s).
\]
By  Lemma \ref{hc-bb-u} and (\ref{Car1}), for every $\tau_{n-1}\leq t_\ell=\ell 2^{-\nu}\leq \tau_n$ we estimate   
\[
\begin{split}
J^{+}(t_\ell)-J^{-}(t_\ell)&~\leq~2^{-\nu}\cdot\int^{\xi_2(t_{\ell})}_{\xi_1(t_{\ell})}\big|G\big[u^{\nu,+}(t_{\ell}-,\cdot)\big](x)\big|~dx\\[2mm]
&~\leq~2^{-\nu}\big\|G\big[u^{\nu,+}(t_\ell-,\cdot)\big]\big\|_{{\bf L}^2(\R)}\cdot|\xi_2(t_\ell)-\xi_1(t_\ell)|^{1/2}\\[2mm]
&~\leq~\left(\|{\bf G}\|_{\infty} e^{\|{\bf G}\|_{\infty}T}\|\bar{u}\|_{{\bf L}^2(\R)}\right)\cdot 2^{-\nu} \Lambda^{1/2}_n.
\end{split}
\]
Noticing that $u_n(\tau_{n-1}+,\cdot)=u_{n-1}(\tau_{n-1}+,\cdot)$, we derive 
\[
\begin{split}
J^{-}(\tau_n)&~=~\int^{\xi_2(\tau_n)}_{\xi_1(\tau_n)}\Big(u_n(\tau_n-,x)-u_{n-1}(\tau_n-,x)\Big)dx~\leq~ \sum_{\tau_{n-1}\leq t_\ell<\tau_n}\big[J^{+}(t_\ell)-J^{-}(t_\ell)\big]
\\[2mm]
&~\leq~ \sum_{\tau_{n-1}\leq t_\ell<\tau_n}\left(\|{\bf G}\|_{\infty} e^{\|{\bf G}\|_{\infty}T}\|\bar{u}\|_{{\bf L}^2(\R)}\right)\cdot 2^{-\nu} \Lambda^{1/2}_n\\[2mm]
&~\leq~\left(\|{\bf G}\|_{\infty} e^{\|{\bf G}\|_{\infty}T}\|\bar{u}\|_{{\bf L}^2(\R)}\right)\cdot T 2^{-n} \Lambda^{1/2}_n.
\end{split}
\]
By (\ref{un-11}) and (\ref{n0}), we get
\[
\begin{split}
\int_{I_n}\Big(u_n(T-,x)-u_{n-1}(T-,x)\Big)dx&~\leq~J^+(\tau_n)-J^-(\tau_n)+J^-(\tau_n)\\
&~\leq~\left(\|{\bf G}\|_{\infty} e^{\|{\bf G}\|_{\infty}T}\|\bar{u}\|_{{\bf L}^2(\R)}\right)\cdot \big(2^{-\nu}+T 2^{-n}\big) \Lambda^{1/2}_n\\[2mm]
&~\leq~\left(\|{\bf G}\|_{\infty} e^{\|{\bf G}\|_{\infty}T}\|\bar{u}\|_{{\bf L}^2(\R)}\right)\cdot T 2^{1-n} \Lambda^{1/2}_n,
\end{split}
\]
and (\ref{L1-b1}), (\ref{Car1}) yield 
\[
\begin{split}
\theta_n\Phi_f(\theta_n/2)&~\leq~4\left(\|{\bf G}\|_{\infty} e^{\|{\bf G}\|_{\infty}T}\|\bar{u}\|_{{\bf L}^2(\R)}\right)\cdot  \Lambda^{1/2}_n\\[2mm]
&~\leq~4\left(\|{\bf G}\|_{\infty} e^{\|{\bf G}\|_{\infty}}\|\bar{u}\|_{{\bf L}^2(\R)}\right)\cdot\left(2^{-n}\Phi_f(\theta_n/2)+2^{1+(1-n)(1-\gamma_p)}\Gamma_1\right)^{1/2}.
\end{split}
\]
Two cases are considered: 
\begin{itemize}
\item If $2^{-n}\Phi_f(\theta_n/2)\leq 2^{1+(1-n)(1-\gamma_p)}\Gamma_1$, then by (\ref{Phi}) we have
\[
C_f2^{-p_1}\theta_n^{1+p_1}~\leq~\theta_n\Phi_f(\theta_n/2)~\leq~4\left(\|{\bf G}\|_{\infty} e^{\|{\bf G}\|_{\infty}}\|\bar{u}\|_{{\bf L}^2(\R)}\right)\cdot 2^{1+(1-n)(1-\gamma_p)/2}\Gamma^{1/2}_1,
\]
and this yields
\[
\theta_n~\leq~ \Gamma_1'  2^{-{n(1-\gamma_p)\over 2(1+p_1)}},\qquad \Gamma'_1~=~ \bigg(2^{(1-\gamma_p)/2}\Gamma^{1/2}_1C^{-1}_f \|{\bf G}\|_{\infty} e^{\|{\bf G}\|_{\infty}}\|\bar{u}\|_{{\bf L}^2(\R)} \, 2^{3+p_1}\bigg)^{{1 \over {1+p_1}}}.
\]
\item Otherwise, if $2^{-n}\Phi_f(\theta_n/2)\geq 2^{1+(1-n)(1-\gamma_p)}\Gamma_1$, then 
\[
\theta_n\Phi_f(\theta_n/2)~\leq~4 \left(\|{\bf G}\|_{\infty} e^{\|{\bf G}\|_{\infty}}\|\bar{u}\|_{{\bf L}^2(\R)}\right)\cdot 2^{(1-n)/2} \Phi_f^{1/2}(\theta_n/2),
\]
and by (\ref{Phi}) we have
\[
C^{1/2}_f 2^{-p_1/2}\theta_n^{1+p_1/2}~\leq~ \theta_n\Phi_f^{1/2}(\theta_n/2)~\leq~ \|{\bf G}\|_{\infty} e^{\|{\bf G}\|_{\infty}}\|\bar{u}\|_{{\bf L}^2(\R)} \, 2^{2+(1-n)/2},
\]
which yields
\[
\theta_n~\leq~ \Gamma_1'' 2^{-{n\over (2+p_1)}},\qquad \Gamma_1''~=~ \bigg(2^{1/2}C_f^{-1/2} \|{\bf G}\|_{\infty} e^{\|{\bf G}\|_{\infty}}\|\bar{u}\|_{{\bf L}^2(\R)} \, 2^{2+p_1/2}\bigg)^{{1 \over {1+p_1/2}}}.
\]
\end{itemize}
In particular, since $\gamma_p\in (0,1)$,  it holds that $2^{-{n(1-\gamma_p)\over 2(1+p_1)}}\geq 2^{-{n\over (2+p_1)}}$, and for every $2\leq n\leq n_0$,
\bel{b-theta}
\theta_n~\leq~\widetilde{\Gamma}_1\cdot 2^{-{n(1-\gamma_p)\over 2(p_1+1)}},\qquad \Tilde{\Gamma}_1~=~\max\{\Gamma'_1,\Gamma''_1\}.
\eeq
Hence, from (\ref{sum1}) and (\ref{te-a}), we get
\[
\begin{split}
\|u^{\nu,+}(T-,\cdot)\|_{{\bf L}^{\infty}(\R)}&~\leq~4\cdot \left({e^{\|{\bf G}\|_{\infty}T}  \|\bar{u}\|^{2}_{{\bf L}^2(\R)}
\over C_f T/2}\right)^{{1\over p_1+2}}+\sum_{n=2}^{n_0}\left(\widetilde{\Gamma}_1 \cdot 2^{-{n(1-\gamma_p)\over 2(p_1+1)}}+2^{-n}\right)\\
&~\leq~\Gamma_2\cdot \left({1\over T^{1/(p_1+2)}}+1\right)\qquad\mathrm{for~some}~\Gamma_2>0.
\end{split}
\]
With the same argument, one can also show that 
\[
\|u^{\nu,-}(T-,\cdot)\|_{{\bf L}^{\infty}(\R)}~\leq~\Gamma_2\cdot \left({1\over T^{1/p_1+2}}+1\right),
\]
and this yields (\ref{L-infty}).
\endproof
\subsubsection {Tightness property} Toward to the existence of the limit of $u^{\nu}$ and its ${\bf L}^{2}$-continuity,  we shall establish a {\it Tightness Property} for the sequence of approximating solutions $\{u^{\nu}\}_{\nu\geq 1}$ by using the following property on ${\bf G}$: If $g\in {\bf L}^2(\R)$ has  support contained in the interval  $[-r,r]$, then  for every $\kappa>0$, it holds
\bel{ke3}
\big\|{\bf G}[g]\big\|_{{\bf L}^2(\R\backslash [-r-\kappa,r+\kappa])}~\leq~2C_K \left\|g\right\|_{{\bf L}^2(\R)} \cdot \sqrt{r\over \kappa}\qquad\forall \kappa>0.
\eeq
Indeed, for every $x\in \R\backslash [-r-\kappa,r+\kappa]$ , one has 
\[
\begin{split}
{\bf G}[g](x)&~=~\int_{\R}K(z)\cdot\chi_{\R\backslash [-\kappa,\kappa]}(z)\cdot g(x-z)dz~=~\left(\big[K\cdot \chi_{\R\backslash [-\kappa,\kappa]}\big]*g\right)(x),
\end{split}
\]
and ({\bf K2}) yields
\[
\begin{split}
\int_{\R\backslash [-r-\kappa,r+\kappa]}\big|{\bf G}[g](x)\big|^2dx&~=~\int_{\R\backslash [-r-\kappa,r+\kappa]}\left|\left(\big[K\cdot \chi_{\R\backslash [-\kappa,\kappa]}\big]*g\right)(x)\right|^2dx\\[2mm]
&~\leq~\left\|K\cdot \chi_{\R\backslash [-\kappa,\kappa]}\right\|^2_{{\bf L}^2(\R)}\cdot \left\|g\right\|^2_{{\bf L}^1(\R)}\\[2mm]
&~\leq~\left(C_K^2\cdot \int_{\R\backslash [-\kappa,\kappa]}{1\over x^2}~dx\right)\cdot \left(2r\left\|g\right\|^2_{{\bf L}^2(\R)}\right)~=~{4C_K^2r\over \kappa}\cdot \left\|g\right\|^2_{{\bf L}^2(\R)}.
\end{split}
\]

\begin{lemma}\label{TN1} For every $\ve>0$, there exists $R_{\ve}>0$ sufficiently large such that 
\bel{TN}
\int_{\R\backslash [-R_{\ve},R_{\ve}]} \big|u^{\nu}(t,x)\big|^2dx~\leq~\ve,\qquad \nu\geq 1, T\in (0,1].
\eeq
\end{lemma}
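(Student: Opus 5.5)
We would prove the tightness estimate (\ref{TN}) by splitting the solution into a part that is initially concentrated on a large interval and a remainder that is small in ${\bf L}^2$. The nonlocal source is handled through (\ref{ke3}), and the transport by the semigroup through the uniform H\"older bound (\ref{Holder}) on characteristics. Throughout we work on $[0,1]$.

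Fix $\ve>0$ and choose $r_0$ so large that $\|\bar u\|_{{\bf L}^2(\R\backslash[-r_0,r_0])}\le \delta$, with $\delta$ to be fixed later. The basic quantity is the exterior energy
\[
E_R(t)\doteq \int_{\R\backslash[-R,R]}|u^\nu(t,x)|^2dx,
\]
and the estimate will be run with a slowly growing radius $R(t)$.

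\textbf{Step 1 (semigroup steps).} On each interval $[t_\ell,t_{\ell+1}[$ we use the entropy inequality for $\eta=v^2/2$ on the region $\{x\ge x^+(s)\}$, exactly as in the proof of Lemma~\ref{hc-bb-u}. Here $x^+$ is a minimal backward characteristic issued from $(t,R(t))$, and similarly on the left.

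\begin{itemize}
\item This shows that the energy to the right of a backward characteristic cannot increase, apart from the jumps at the times $t_\ell$.
\item Lemma~\ref{hc-bb-u} gives that such a characteristic moves by at most $\Gamma_1$ over $[0,1]$.
\item Hence, taking $R(t)= r+2\Gamma_1$ for every $t\in[0,1]$, it suffices to control energy initially outside $[-r,r]$, plus the energy injected by the source outside the moving interval $[x^+(t),x^-(t)]$.
\end{itemize}

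\textbf{Step 2 (source jumps).} At each $t_\ell$, the increase of $\int_{x\ge x^+(t_\ell)}\eta$ is at most
\[
2^{-\nu}\|u^\nu(t_\ell-)\|_{{\bf L}^2(\{x\ge x^+\})}\,\|{\bf G}[u^\nu(t_\ell-)]\|_{{\bf L}^2(\{x\ge x^+\})}+2^{-2\nu}\|{\bf G}\|_\infty^2\|u^\nu\|^2_{{\bf L}^2}.
\]
To bound the ${\bf G}$ factor, split $u^\nu(t_\ell-)=g_1+g_2$, where $g_1$ is supported in $[-\rho,\rho]$ with $\rho=r-\kappa$ and $g_2$ is supported outside.

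\begin{itemize}
\item By (\ref{ke3}), $\|{\bf G}[g_1]\|$ outside $[-r,r]$ is at most $2C_K e^{\|{\bf G}\|_\infty}\|\bar u\|_{{\bf L}^2}\sqrt{\rho/\kappa}$.
\item By (\ref{C-G}), $\|{\bf G}[g_2]\|\le\|{\bf G}\|_\infty\,\|g_2\|_{{\bf L}^2}$, and $\|g_2\|_{{\bf L}^2}$ is itself an exterior energy at a slightly smaller radius.
\end{itemize}

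This produces a Gronwall-type recursion over a nested family of radii
\[
r_k = r_0 + k(\kappa+2\Gamma_1).
\]
The difficulty is that $\sqrt{\rho/\kappa}$ is not small unless $\kappa\gg\rho$.

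\textbf{Step 3 (closing the recursion) — the main obstacle.} The term $\sqrt{\rho/\kappa}$ shows that (\ref{ke3}) alone is too crude, so the plan is to estimate ${\bf G}[g_1](x)$ pointwise instead.

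\begin{itemize}
\item By {\bf (K2)}, $|{\bf G}[g_1](x)|\le C_K\|g_1\|_{{\bf L}^1}/\mathrm{dist}(x,[-\rho,\rho])$.
\item By Cauchy–Schwarz, $\|g_1\|_{{\bf L}^1}\le\sqrt{2\rho}\,\|g_1\|_{{\bf L}^2}$, so the contribution outside $[-\rho-\kappa,\rho+\kappa]$ is $O(\sqrt{\rho/\kappa})$.
\end{itemize}

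So we take $\kappa=\rho\,\delta^{-2}$, i.e.\ the outer radius $R_\ve \approx r_0(1+\delta^{-2})+2\Gamma_1$, chosen after $r_0$. With this choice the exterior energy satisfies
\[
E(t)\le \delta^2+C\int_0^t\big(\delta+E(s)^{1/2}\big)\,ds,
\]
with constants depending only on $\|\bar u\|_{{\bf L}^2}$ and $\|{\bf G}\|_\infty$. Gronwall then gives $E\le C'\delta$ uniformly in $\nu$ and $t\in(0,1]$, and choosing $\delta$ small finishes the proof.

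The delicate point is that the radii must not degenerate through the recursion. Because the cross term is bounded with a single fixed $\kappa$, one enlargement of the radius suffices rather than infinitely many. A fallback, if the bookkeeping fails, is a smooth cutoff $\phi_R$ in the entropy inequality, with the commutator of ${\bf G}$ and $\phi_R$ bounded via {\bf (K2)}.
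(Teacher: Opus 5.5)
\textbf{The proposal has a genuine gap: a single enlargement of the radius does not close the estimate.}

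Your Steps 1--2 use the same ingredients as the paper: the exterior energy is nonincreasing across backward characteristics, characteristics move by at most $\Gamma_1$, and ${\bf G}[u^\nu(t_\ell-)]$ is split into an inner part handled by (\ref{ke3}) and an outer part handled by (\ref{C-G}). The closing step is where the argument fails.

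\begin{itemize}
\item With $g_1=u^\nu\chi_{[-\rho,\rho]}$ and $\kappa=\rho\delta^{-2}$ you do get $\|{\bf G}[g_1]\|_{{\bf L}^2(|x|>R_\ve)}=O(\delta)$. Your ``pointwise'' estimate in Step 3 is just (\ref{ke3}) again; only the choice of $\kappa$ helps.
\item The term $g_2$, however, is bounded by $\|u^\nu\|_{{\bf L}^2(|x|>\rho)}$. This is the exterior norm at the \emph{inner} radius $\rho\approx\delta^2R_\ve$, and it contains all the mass in the huge annulus $\rho<|x|<R_\ve$.
\item That mass is not controlled by the exterior energy at $R_\ve$, and nothing in your argument makes it small for $t>0$. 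Indeed, ${\bf G}$ sends a fixed fraction of the ${\bf L}^2$ mass of a function supported in $[-\rho,\rho]$ to distance $\sim\rho$ outside that interval (think of the Hilbert transform of $\chi_{[-\rho,\rho]}$). So after a time of order one the annulus can carry mass comparable to $\|\bar u\|_{{\bf L}^2}$.
\item Consequently your inequality $E(t)\le\delta^2+C\int_0^t(\delta+E(s)^{1/2})\,ds$ uses $E$ for the exterior energy at $R_\ve$ on the left but at $\rho$ on the right. It is not a closed inequality.
\item Even if it were closed, it would not give $E=O(\delta)$. Because you bounded the factor $\|u^\nu\|_{\mathrm{ext}}$ in the cross term by a constant, the comparison ODE solution satisfies $\frac{d}{dt}E^{1/2}\ge C/2$. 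It is therefore of order one at $t=1$.
\end{itemize}
The bookkeeping must be done in norms, using $\|u+2^{-\nu}{\bf G}[u]\|_{\mathrm{ext}}\le\|u\|_{\mathrm{ext}}+2^{-\nu}\|{\bf G}[u]\|_{\mathrm{ext}}$, which gives a linear recursion.

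\textbf{What is missing is a genuinely multiscale iteration.}

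The paper uses the infinite chain of radii (\ref{R-i}), with $R_i-R_{i-1}-2\Gamma_1=4^{i+1}C_K^2\|{\bf G}\|_\infty^{-2}(R_{i-1}+\Gamma_1)$. At level $i$, the part inside level $i-1$ then contributes at most $2^{-i}\|{\bf G}\|_\infty e^{\|{\bf G}\|_\infty}\|\bar u\|_{{\bf L}^2}$ by (\ref{ke3}), and the remainder contributes $\|{\bf G}\|_\infty p_{i-1}$. Summing over levels, $s_\ell=\sum_ip_i(t_\ell)$ satisfies a closed linear recursion, uniformly in $\nu$. Two remarks on this:
\begin{itemize}
\item Your arithmetic radii in Step 2 fail for the reason you noticed, namely $\sqrt{r_k/\kappa}\sim\sqrt k$. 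The radii must grow geometrically.
\item In the paper's version one should also choose the radii so that the initial tails $\|\bar u\|_{{\bf L}^2(|x|>R_i-\Gamma_1)}$ are summable; otherwise $s_0$ can be infinite.
\end{itemize}

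A finite version repairs your plan directly.
\begin{itemize}
\item Take radii $\rho_0<\dots<\rho_k=R_\ve$ with $\rho_j-\rho_{j-1}-2\Gamma_1\ge\delta^{-2}(\rho_{j-1}+\Gamma_1)$ and $\|\bar u\|_{{\bf L}^2(|x|>\rho_0-\Gamma_1)}\le\delta$.
\item Let $y_j(t)$ be the supremum over $[0,t]$ of the exterior norm at level $j$. Then $y_j(t)\le\delta+\int_0^t\big(C\delta+\|{\bf G}\|_\infty y_{j-1}(s)\big)\,ds$.
\item Iterating from $y_0\le e^{\|{\bf G}\|_\infty}\|\bar u\|_{{\bf L}^2}$ gives $y_k(T)\le C'\delta+e^{\|{\bf G}\|_\infty}\|\bar u\|_{{\bf L}^2}\|{\bf G}\|_\infty^k/k!$.
\item This is below $\sqrt{\ve}$ once $k$ is large and then $\delta$ is small.
\end{itemize}

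Your smooth-cutoff fallback does not escape the problem if $\phi_R$ varies on a single scale $R$. For the Hilbert kernel, $\|[\phi_R,{\bf G}]\|_{{\bf L}^2\to{\bf L}^2}$ is independent of $R$ by dilation invariance, so the commutator term is again controlled only by the (non-small) annulus mass.
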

{\bf Proof.} For a fixed $\nu\geq 1$ and $T\in (0,1]$, let $R_i^+(t)$ and $R_i^-(t)$ be the maximal and minimal backward characteristics of the approximate solution $u^\nu$ through the points $(T, R_i)$ and $(T, -R_i)$, respectively. Here the sequence of radii $(R_i)_{i\geq 0}$ is inductively defined as:
\bel{R-i}
R_0~=~0,\qquad R_i-R_{i-1}~=~ \ds{2^{2i+2}C_K^2\over \|{\bf G}\|^2_{\infty} } \cdot \big(R_{i-1}+\Gamma_1\big)+ 2\Gamma_1,\qquad i\geq 1.
\eeq
For each $t\in [0,T]$ and $i\geq 1$, we define the spaces 
\[
H_0(t)~\doteq~ \Big\{u\in {\bf L}^2(\R): ~ supp(u) \subseteq [R_1^-(t),R_1^+(t)]\Big\}
\]
\[
 H_i(t)~ \doteq ~\Big\{u\in {\bf L}^2(\R): ~ supp(u) \subseteq [R_{i}^+(t), R_{i+1}^+(t)]\cup [R_{i+1}^-(t), R_i^-(t)]\Big\}, 
\]
and  call
\[
K_i^-(t)~\doteq~ H_0(t) \oplus \dots \oplus H_{i-1}(t), \qquad K_i^+(t) ~\doteq~ H_i(t) \oplus H_{i+1}(t) \oplus \dots
\]
so that $K_i^-(t)\oplus K_i^+(t)={\bf L}^2(\R)$, with perpendicular projections $\pi_{i}^{\pm}(t):{\bf L}^2(\R)\to K_i^\pm(t)$ satisfying 
\bel{Proj}
\pi_i^-(t)(u)+\pi_i^+(t)(u)~=~u,\qquad \pi_i^-(t)(u)~=~\begin{cases} u(x), \quad x\in [R_i^-(t), R_i^+(t)], \\[3mm] 0, \qquad \,\, \text{else}. \end{cases}
\eeq
 Since the curves $R_i^-, R_i^+$ are characteristics, on each time subinterval $[t_{\ell-1},t_\ell)$ with $t_\ell=\ell 2^{-\nu}$ we have, 
\[
 \frac{d}{dt} \int_{\R} \big(\pi_i^+ \, u^{\nu}(t,\cdot) \big)^2 \, dx~=~ \frac{d}{dt} \int_{\R\setminus [R_i^-(t), R_i^+(t)]} \big(u^\nu(t,x)\big)^2 \, dx ~\leq~0.
\]
Hence the non-decreasing map $t\mapsto p_i(t)$ defined by 
\bel{pi}
p_i(t)~=~ \sup_{s\in [0,t]}\|\pi^{+}_i(s)\big(u^{\nu}(s,\cdot)\big)\|_{\bf L^2(\R)},\qquad t\in [0,T],
\eeq
satisfies 
\bel{p-w}
p_i(t)~=~p_i(t_{\ell-1}),\qquad t\in [t_{\ell-1},t_{\ell}[.
\eeq
Using {\bf (K1)}, we estimate 
\bel{p1}
\begin{split}
p_{1}(t_\ell)-p_{1}(t_{\ell-1})&~=~p_1(t_\ell)-p_1(t_{\ell}-)\\[1mm]
&~\leq~\|u^\nu(t_\ell+,\cdot)\|_{{\bf L}^2(\R\setminus [R_1^-(t), R_1^+(t)])}-\|u^\nu(t_\ell-,\cdot)\|_{{\bf L}^2(\R\setminus [R_1^-(t), R_1^+(t)])}\\[1mm]
&~\leq~2^{-\nu} \, \big\|{\bf G}[u^\nu(t_\ell-,\cdot)]\big\|_{{\bf L}^2(\R\setminus [R_1^-(t), R_1^+(t)])}~\leq~2^{-\nu}\|{\bf G}\|_{\infty}e^{\|{\bf G}\|_{\infty}} \|\bar{u}\|_{{\bf L}^2(\R)},
\end{split}
\eeq
and for all $i\geq 2$
\[
\begin{split}
p_{i}(t_\ell)-p_{i}(t_{\ell-1})&~=~p_i(t_\ell)-p_i(t_{\ell}-)~\leq~2^{-\nu} \, \|{\bf G}[u^\nu(t_\ell-,\cdot)]\|_{{\bf L}^2(\R\setminus [R_i^-(t), R_i^+(t)])} \\[1mm]
&~\leq~2^{-\nu} \, \Big(\| {\bf G} \big[ \pi_{i-1}^+ u^\nu(t_\ell-,\cdot) \big] \|_{{\bf L}^2(\R\setminus [R_i^-(t), R_i^+(t)])} \\[1mm]
&\qquad\qquad\qquad\qquad\qquad\qquad + \| {\bf G} \big[ \pi_{i-1}^- u^\nu(t_\ell-,\cdot) \big] \|_{{\bf L}^2(\R\setminus [R_i^-(t), R_i^+(t)])}\Big)\\[1mm]
&~\leq~2^{-\nu}\left(\|{\bf G}\|_{\infty} \cdot p_{i-1}(t_{\ell}-)+\| {\bf G} \big[ \pi_{i-1}^- u^\nu(t_\ell-,\cdot) \big] \|_{{\bf L}^2(\R\setminus [R_i^-(t), R_i^+(t)])}\right)\\[1mm]
&~\leq~2^{-\nu}\left(\|{\bf G}\|_{\infty} \cdot p_{i-1}(t_{\ell-1})+\| {\bf G} \big[ \pi_{i-1}^- u^\nu(t_\ell-,\cdot) \big] \|_{{\bf L}^2(\R\setminus [R_i^-(t), R_i^+(t)])}\right).
\end{split}
\]
By lemma  \ref{hc-bb-u}, we have 
\[
\big[-R_i+\Gamma_1, R_i-\Gamma_1\big]~\subseteq~\big[R^{-}_i(t),R^{+}_i(t)\big]~\subseteq~ \big[-R_i- \Gamma_1 , R_i+\Gamma_1 \big],
\]
and derive from (\ref{ke3}), (\ref{R-i}), and (\ref{L2-u}) that
\[
\begin{split}
\big\| {\bf G} \big[ \pi_{i-1}^- u^\nu(t_\ell-,\cdot) \big] \big\|_{{\bf L}^2(\R\setminus [R_i^-(t), R_i^+(t)])}&~\leq~ 2C_K\sqrt{R_{i-1}+ \Gamma_1 \over R_i-R_{i-1}-2\Gamma_1 }\cdot\| \pi_{i-1}^- u^\nu(t_\ell-,\cdot)\|_{{\bf L}^2(\R)}\\[2mm]
&~\leq~ 2^{-i} \, \|{\bf G}\|_{\infty} e^{\| {\bf G}\|_{\infty}} \cdot \|\bar{u}\|_{{\bf L}^2(\R)}.
\end{split}
\]
Thus, we get 
\bel{in-p}
p_{i}(t_\ell)-p_{i}(t_{\ell-1})~\leq~
2^{-\nu}\|{\bf G}\|_{\infty}\cdot  \left(p_{i-1}(t_{\ell-1})+ 2^{-i}\cdot e^{\| {\bf G}\|_{\infty}} \cdot \|\bar{u}\|_{{\bf L}^2(\R)}\right),\qquad i\geq 2.
\eeq
In particular, setting $s_\ell\doteq \ds\sum_{i=1}^{\infty} p_{i}(t_\ell)$, we derive from (\ref{p1}) and (\ref{in-p}) that  
\[
\begin{cases}
s_{\ell}-s_{\ell-1}~\leq~2^{-\nu}\|{\bf G}\|_{\infty}\cdot\left(s_{\ell-1}+2e^{\| {\bf G}\|_{\infty}} \cdot \|\bar{u}\|_{{\bf L}^2(\R)}\right),\\[2mm]
s_0~=~\ds\sum_{i=1}^{\infty}\|\pi^{+}_i(0)\big(u^{\nu}(0,\cdot)\big)\|_{\bf L^2(\R)}~=~\|\bar{u}\|_{{\bf L}^2(\R)}.
\end{cases}
\]
By a standard computation, we obtain 
\[
\begin{split}
s_{\ell}&~\leq~(1+2^{-\nu}\|{\bf G}\|_{\infty})^{\ell}\cdot\left(1+2e^{\| {\bf G}\|_{\infty}} \right)\cdot \|\bar{u}\|_{{\bf L}^2(\R)}~\leq~e^{t_\ell\|{\bf G}\|_{\infty}}\cdot\left(1+2e^{\| {\bf G}\|_{\infty}} \right)\cdot \|\bar{u}\|_{{\bf L}^2(\R)},
\end{split}
\]
and (\ref{p-w})  yields  
\[
\sum_{i=1}^{\infty}\sup_{s\in [0,T]}\|\pi^{+}_i(t)\big(u^{\nu}(t,\cdot)\big)\|_{\bf L^2(\R)}~=~\sum_{i=1}^{\infty}p_i(T)~\leq~e^{\|{\bf G}\|_{\infty}}\cdot\left(1+2e^{\| {\bf G}\|_{\infty}} \right)\cdot \|\bar{u}\|_{{\bf L}^2(\R)}.
\]
Hence, $\ds\lim_{i\to\infty}\sup_{s\in [0,T]}\|\pi^{+}_i(t)\big(u^{\nu}(t,\cdot)\big)\|_{\bf L^2(\R)}=0$ and (\ref{TN}) holds for every $\ve>0$.
\endproof

\subsection{Proof of Theorem \ref{Main1}}
After introducing the sequence of approximate solutions $(u^{\nu})_{\nu\geq 1}$ and deriving some a priori estimates along with the tightness property in the previous section, we now establish the existence of a weak entropy solution to (\ref{BL})-(\ref{G}) on $[0,1]\times \R$ with initial data ${\bar u}\in {\bf L}^2(\R)$.  The solution can then be prolonged to any time interval $[0,T]$ for $T>0$. The proof is divided into the following steps:
\medskip

{\bf 1. ${\bf L}^1$-estimates of approximate solutions.} For every $\delta>0$,  consider a new sequence of approximate solutions  $u^{\nu}_{\delta}:[\delta,1]\times\R\to\R$ which is defined as follows: for every $t\in [{\delta},1[$ such that  $t=(1-\theta_t)\cdot t_{\ell}+\theta_t\cdot t_{\ell+1}\in [\delta, \infty[$ for  $\theta_t\in [0,1[$ and $t_\ell=\ell2^{-\nu}$ with $\ell\in \{0,\cdots, 2^{\nu}-1\}$, we define
\bel{u-d}
u^{\nu}_{\delta}(t,x)~=~(1-\theta_t)\cdot S_{\delta}\big(u^{\nu}([t_\ell-\delta]+,\cdot)\big)(x)+\theta_t\cdot S_{\delta}\big(u^{\nu}([t_{\ell+1}-\delta]-,\cdot)\big)(x).
\eeq
Fix any $R>0$. For every $t\in [\delta,1]$, it holds
 \bel{d-12}
  \begin{split}
  \big\|u^{\nu}_{\delta}(t,\cdot)-u^{\nu}(t,\cdot)\big\|_{{\bf L}^1([-R,R])}&~\leq~(1-\theta_t)\cdot \big\|S_{\delta}\big(u^{\nu}([t_\ell-\delta]+,\cdot)\big)-u^{\nu}(t,\cdot)\big\|_{{\bf L}^1([-R,R])}\\[2mm]
 &\qquad\quad+\theta_t\cdot\|S_{\delta}\big(u^{\nu}([t_{\ell+1}-\delta]-,\cdot)-u^{\nu}(t,\cdot)\big)\|_{{\bf L}^1([-R,R])}.
  \end{split}
 \eeq
Let $s\mapsto x^{\nu,-}(s)$ and $s\mapsto x^{\nu,+}(s)$  be the minimal and maximal backward characteristics for $u^{\nu}$ through $(t,-R)$ and $(t,R)$,  respectively. For every subinterval, $[t_{\ell},t_{\ell+1}]\subset [t-\delta,t]$, we have
\[
\begin{split}
\int_{x^{\nu,-}(t_{\ell+1})}^{x^{\nu,+}(t_{\ell+1})}&\Big|u^{\nu}(t_{\ell+1}-,x)-S_{\delta-(t-t_{\ell+1})}u^{\nu}(t-\delta,x)\Big|~dx\\
&=~\int_{x^{\nu,-}(t_{\ell+1})}^{x^{\nu,+}(t_{\ell+1})}\left|S_{2^{-\nu}}u^{\nu}(t_{\ell}+,x)-S_{2^{-\nu}}\circ S_{\delta-(t-t_{\ell})}u^{\nu}(t-\delta,x)\right|dx\\
&\leq~\int_{x^{\nu,-}(t_{\ell})}^{x^{\nu,+}(t_{\ell})}\left|u^{\nu}(t_{\ell}+,x)-S_{\delta-(t-t_{\ell})}u^{\nu}(t-\delta,x)\right|dx\\
&\leq~\int_{x^{\nu,-}(t_{\ell})}^{x^{\nu,+}(t_{\ell})}\bigg(\left|u^{\nu}(t_{\ell}-,x)-S_{\delta-(t-t_{\ell})}u^{\nu}(t-\delta,x)\right|+2^{-\nu}\cdot |{\bf G}[u^{\nu}(t_{\ell}-,\cdot)](x)|\bigg)dx.
\end{split}
\] 
From (\ref{C-G}), (\ref{L2-u}), (\ref{Holder}), we derive 
\bel{CR0}
\int_{x^{\nu,-}(t_{\ell})}^{x^{\nu,+}(t_{\ell})} |{\bf G}[u^{\nu}(t_{\ell}-,\cdot)](x)|dx~\leq~\|{\bf G}\|_{\infty} e^{\| {\bf G}\|_{\infty}} \cdot \|\bar{u}\|_{{\bf L}^2(\R)} \cdot (R+\Gamma_1)^{1/2}~\doteq~C_{[0,R]},
\eeq
and get
\[
\begin{split}
\int_{x^{\nu,-}(t_{\ell+1})}^{x^{\nu,+}(t_{\ell+1})}\Big|u^{\nu}(t_{\ell+1}-,x)&-S_{\delta-(t-t_{\ell+1})}u^{\nu}(t-\delta,x)\Big|~dx\\
&\leq~\int_{x^{\nu,-}(t_{\ell})}^{x^{\nu,+}(t_{\ell})}\left|u^{\nu}(t_{\ell}-,x)-S_{\delta-(t-t_{\ell})}u^{\nu}(t-\delta,x)\right|+2^{-\nu}\cdot C_{[0,R]}.
\end{split}
\]
Hence, from (\ref{d-12}), an inductive argument yields
\bel{dist-1}
\begin{split}
 \big\|u^{\nu}_{\delta}(t,\cdot)-u^{\nu}(t,\cdot)\big\|_{{\bf L}^1([-R,R])}&\leq~(1-\theta_t)\cdot \sum_{t-\delta< t_i\leq t}2^{-\nu}\cdot C_{[0,R]}+\theta_t\cdot\sum_{t-\delta< t_i\leq t}2^{-\nu}\cdot C_{[0,R]}\\
 &\leq~C_{[0,R]}\cdot \delta\qquad\forall t\in [2\delta, 1].
 \end{split}
\eeq
Next, we prove the Lipschitz continuity of the map $t\mapsto u^{\nu}_{\delta}(t,\cdot)$ in ${\bf L}^1([-R,R])$. For every $s,t\in [t_\ell,t_{\ell+1}[\subseteq [\delta,1]$, it holds
\bel{L1}
\begin{split}
\|u^{\nu}_{\delta}(t,\cdot&)-u^{\nu}_{\delta}(s,\cdot)\|_{{\bf L}^1([-R,R])}\\[2mm]
&~=~\left\|S_{\delta}\big(u^{\nu}([t_{\ell+1}-\delta]-,\cdot)\big)-S_{\delta}\big(u^{\nu}([t_\ell-\delta]+,\cdot)\big)\right\|_{{\bf L}^1([-R,R])}\cdot |\theta_t-\theta_s|\\[1mm]
&~\leq~2^{\nu}\cdot \left\|S_{\delta}\big(u^{\nu}([t_{\ell+1}-\delta]-,\cdot)\big)-S_{\delta}\big(u^{\nu}([t_\ell-\delta]+,\cdot)\big)\right\|_{{\bf L}^1([-R,R])}\cdot |t-s|\\[1mm]
&~=~2^{\nu}\cdot \left\|S_{\delta}\circ S_{2^{-\nu}}\big(u^{\nu}([t_{\ell}-\delta]+,\cdot)\big)-S_{\delta}\big(u^{\nu}([t_\ell-\delta]+,\cdot)\big)\right\|_{{\bf L}^1([-R,R])}\cdot |t-s|\\[1mm]
&~=~2^{\nu}\cdot \left\|S_{2^{-\nu}}\circ S_{\delta}\big(u^{\nu}([t_{\ell}-\delta]+,\cdot)\big)-S_{\delta}\big(u^{\nu}([t_\ell-\delta]+,\cdot)\big)\right\|_{{\bf L}^1([-R,R])}\cdot |t-s|.
\end{split}
\eeq
From Lemma \ref{F-BV}, we have 
\bel{L-b}
\big\|S_{\delta}\big(u^{\nu}([t_\ell-\delta]+,\cdot)\big)\big\|_{{\bf L}^{\infty}(\R)}, \left\|u^{\nu}_{\delta}(t,\cdot)\right\|_{{\bf L}^{\infty}(\R)}~\leq~M_{\delta}~=~4\cdot\left({\|\bar{u}\|^2_{{\bf L}^2(\R)}\over C_f\delta}\right)^{1/(p_1+2)}
\eeq
for all $t\in [\delta,1]$, and
\bel{TV-b1}
\begin{split}
TV^{1/p_1}\big\{u^{\nu}_{\delta}(t,\cdot);[-R,R]\big\}&~\leq~(1-\theta_t)\cdot TV^{1/p_1}\big\{S_{\delta}\big(u^{\nu}([t_\ell-\delta]+,\cdot)\big);[-R,R]\big\}\\[1mm]
&\qquad\qquad\qquad +\theta_t\cdot TV^{1/p_1}\big\{S_{\delta}\big(u^{\nu}([t_{\ell+1}-\delta]+,\cdot)\big);[-R,R]\big\}\\[1mm]
&~\leq~{4R\over C_f\delta}+{2\over C_f}\cdot \|f'\|_{{\bf L}^{\infty}([-M_{\delta},M_{\delta}])}.
\end{split}
\eeq
Set $C_{[1,\delta]}\doteq \sup\{|f'(s)|:|s|\leq  M_{\delta}\}$. By (\ref{O}), we estimate 
\[
\begin{split}
\tv\left\{f'\Big(S_{\delta}\big(u^{\nu}([t_\ell-\delta]+,\cdot)\big)\Big); [-R,R]\right\}&~\leq~2C_{[1,\delta]}+{4R\over \delta} 
\end{split}
\]
and {\bf (F)} yields
\bel{TV-fu}
\begin{split}
\tv&\big\{f\Big(S_{\delta}\big(u^{\nu}(t_\ell-\delta,\cdot)\big)\Big); [-R,R]\big\}\\[1mm]
&~\leq~\sup_{|u|,|v|\leq M_{\delta}, u\neq v}{|f(u)-f(v)|\over |f'(u)-f'(v)|}\cdot \tv\left\{f'\Big(S_{\delta}\big(u^{\nu}(t_\ell-\delta,\cdot)\big)\Big); [-R,R]\right\}\\[1mm]
&~\leq~\Lambda_{M_{\delta}}\cdot \left(2C_{[1,\delta]}+{4R\over \delta}\right)~\doteq~C_{[1,\delta,R]}.
\end{split}
\eeq
Hence,  we get
\[
\begin{split}
\big\|S_{2^{-\nu}}\circ S_{\delta}\big(u^{\nu}([t_{\ell}-\delta]+,\cdot)\big)&-S_{\delta}\big(u^{\nu}([t_\ell-\delta]+,\cdot)\big)\big\|_{{\bf L}^1([-R,R])}~\leq~2^{-\nu}\cdot C_{[1,\delta,R]},
\end{split}
\]
and (\ref{L1}) yields 
\bel{es1}
\big\|u^{\nu}_{\delta}(t,\cdot)-u^{\nu}_{\delta}(s,\cdot)\big\|_{{\bf L}^1([-R,R])}~\leq~C_{[1,\delta,R]}\cdot |t-s|,\qquad s,t\in [\delta,1].
\eeq

\n {\bf 2. Existence of a limiting function.} By (\ref{TV-b1}) and the compactness property of $BV^{1/p_1}$ functions in ${\bf L}^1$, one can construct a subsequence $\{u^{\nu}_{\delta}\}_{\nu\in\mathcal{I}_{\delta}}$ such that $u^{\nu}_{\delta}(t,\cdot)$ converges to $u_{\delta}(t,\cdot)$ pointwise in $[-R,R]$ and hence  also in ${\bf L}^1([-R,R],\R)$ at every rational $t\in [0,1]$. By the Lipschitz continuity in (\ref{es1}), this can be uniquely extended to a map $t\mapsto u_{\delta}(t,\cdot)$ in ${\bf L}^1([-R,R],\R)$ such that 
\bel{cv1}
\lim_{\mathcal{I}_{\delta}\ni\nu\to\infty} \|u^{\nu}_{\delta}(t,\cdot)-u_{\delta}(t,\cdot)\|_{{\bf L}^1([-R,R])}~=~0,\qquad t\in [2\delta,1],
\eeq
\bel{cv2}
\lim_{\mathcal{I}_{\delta}\ni\nu\to\infty}\|u^{\nu}_{\delta}-u_{\delta}\|_{{\bf L}^1([2\delta,1]\times [-R,R])}~=~0.
\eeq
Since this construction can be repeated for every integer $R>0$ and  $\delta=2^{-\mu}$ with $\mu\in \mathbb{Z}^+$, we can select a countable set of indices $\mathcal{I}\subseteq \mathbb{N}$ such that both (\ref{cv1}) and (\ref{cv2}) hold for $\nu\in \mathcal{I}$. 

In this case, from (\ref{dist-1}) and (\ref{cv1}), for any integer $R>0$ and  $\mu\in \mathbb{Z}^+$, it holds
\[
\begin{split}
\limsup_{\nu,\nu'\in\mathcal{I}} \big\|u^{\nu}(t,\cdot)&-u^{\nu'}(t,\cdot)\big\|_{{\bf L}^1([-R,R])}~\leq~\limsup_{\nu\to\infty}\big\|u^{\nu}(t,\cdot)-u^{\nu}_{2^{-\mu}}(t,\cdot)\big\|_{{\bf L}^1([-R,R])}\\[2mm]
&+\limsup_{\nu\to\infty}\big\|u^{\nu}_{2^{-\mu}}(t,\cdot)-u^{\nu'}_{2^{-\mu}}(t,\cdot)\big\|_{{\bf L}^1([-R,R])}+\limsup_{\nu\to\infty}\big\|u^{\nu'}_{2^{-\mu}}(t,\cdot)-u^{\nu'}(t,\cdot)\big\|_{{\bf L}^1([-R,R])}\\[2mm]
&\leq~2\cdot C_{[0,R]}\cdot 2^{-\mu}.
\end{split}
\]

Hence,  the sequence $\{u^{\nu}(t,\cdot)\}_{\nu\in \mathcal{I}}$ is Cauchy  in ${\bf L}^1([-R,R])$ and for some limit function $u(t,x)$ such that
\bel{l1}
\lim_{\mathcal{I}\ni\nu\to\infty} \|u^{\nu}(t,\cdot)-u(t,\cdot)\|_{{\bf L}^1([-R,R])}~=~0,\qquad t\in [2^{-\mu},1],
\eeq
\bel{l2}
\lim_{\mathcal{I}\ni\nu\to\infty}\|u^{\nu}(t,\cdot)-u(t,\cdot)\|_{{\bf L}^1([2^{-\mu},1]\times [-R,R])}~=~0.
\eeq
In particular, $u^{\nu}(t,x)$ converges to $u(t,x )$ for a.e. $x\in \R$ and $t\in (0,1]$. From  (\ref{e-s-l2}), Lemma \ref{L-infty-b}, and (\ref{dist-1}), we also derive
\bel{u-l2-b}
\|u(t,\cdot)\|_{{\bf L}^2(\R)}~\leq~\liminf_{\nu\to\infty}\|u^{\nu}(t,\cdot)\|_{{\bf L}^2(\R)}~\leq~e^{\| {\bf G}\|_{\infty}t} \cdot\|\bar{u}\|_{{\bf L}^2(\R)},
\eeq
and 
\bel{ze-c}
\begin{split}
\lim_{\delta\to 0+} \|u(\delta,\cdot)-\bar{u}\|_{{\bf L}^1([-R,R])}&~=~\lim_{\delta\to 0+} \|u(\delta,\cdot)-S_\delta(\bar{u})\|_{{\bf L}^1([-R,R])}\\
&~=~\lim_{\delta\to 0+}\left(\lim_{\nu\to\infty} \left(\|u^{\nu}(\delta,\cdot)-u^{\nu}_{\delta}(\delta,\cdot)\|_{{\bf L}^1([-R,R])}\right)\right)~=~0,
\end{split}
\eeq
and 
\bel{L-b-u}
\|u(t-,\cdot)\|_{{\bf L}^{\infty}(\R)}~\leq~\Gamma_2\cdot \left({1\over t^{1/(2+p_1)}}+1\right)\quad\forall t\in \big\{\ell 2^{-\nu}: 1\leq \ell\leq 2^{\nu},\nu\geq 1\big\}.
\eeq

{\bf 3. The map $t\mapsto u(t,\cdot)$ is continuous from $[0,1]\to {\bf L}^2(\R)$.} Fix $\bar{t}\in (0,1]$ and $\delta_0>0$ sufficiently small such that $[\bar{t}-\delta_0,\bar{t}+\delta_0]\subset (0,1]$ . For every $t,s\in [\bar{t}-\delta_0,\bar{t}+\delta_0]$ with $t>s$, it follows from (\ref{dist-1}) and (\ref{es1}) that 
\[
\begin{split}
 \big\|u^{\nu}(t,\cdot)-u^{\nu}(s,\cdot)&\big\|_{{\bf L}^1([-R,R])}~\leq~ \big\|u^{\nu}(t,\cdot)-u^{\nu}_{\delta}(t,\cdot)\big\|_{{\bf L}^1([-R,R])}+\big\|u^{\nu}(s,\cdot)-u^{\nu}_{\delta'}(s,\cdot)\big\|_{{\bf L}^1([-R,R])}\\[2mm]
 &\qquad\qquad\qquad+\big\|u_{\delta}^{\nu}(t,\cdot)-u^{\nu}_{\delta}(s,\cdot)\big\|_{{\bf L}^1([-R,R])}+\big\|u_{\delta}^{\nu}(s,\cdot)-u^{\nu}_{\delta'}(s,\cdot)\big\|_{{\bf L}^1([-R,R])}\\[2mm]
 &~\leq~C_{[0,R]}\cdot(\delta+\delta')+C_{[1,\delta,R]}\cdot |t-s|+\big\|u_{\delta}^{\nu}(s,\cdot)-u^{\nu}_{\delta'}(s,\cdot)\big\|_{{\bf L}^1([-R,R])}.
\end{split}
\]
Choosing $\delta=\delta'+(t-s)$, we have
\[
\begin{split}
\big\|u_{\delta}^{\nu}(s,\cdot)&-u^{\nu}_{\delta'}(s,\cdot)\big\|_{{\bf L}^1([-R,R])}\\[2mm]
&\qquad~\leq~(1-\theta_s)\cdot \left\|S_{(t-s)}\circ S_{\delta'}\big(u^{\nu}([s_k-\delta']+,\cdot)\big)-S_{\delta'}\big(u^{\nu}([s_{k}-\delta']+,\cdot)\big)\right\|_{{\bf L}^1([-R,R])}\\[2mm]
&\qquad\qquad+\theta_s\cdot \left\|S_{(t-s)}\circ S_{\delta'}\big(u^{\nu}([s_{k+1}-\delta']-,\cdot)\big)-S_{\delta'}\big(u^{\nu}([s_{k+1}-\delta']-,\cdot)\big)\right\|_{{\bf L}^1([-R,R])}\\[2mm]
&\qquad~\leq~2(t-s)\cdot C_{[1,\delta',R]}
\end{split}
\]
 with $s\in [s_k,s_{k+1}[\subset [\bar{t}-\delta_0,\bar{t}+\delta_0]\subset (0,1]$.  This implies that
\bel{ke11}
\begin{split}
 \big\|u^{\nu}(t,\cdot)-u^{\nu}(s,\cdot)\big\|_{{\bf L}^1([-R,R])}&~\leq~C_{[0,R]}\cdot(\delta+\delta')+\big(C_{[1,\delta,R]}+2 C_{[1,\delta',R]}\big)\cdot(t-s)\\[2mm]
 &~\leq~2C_{[0,R]}\cdot \delta' +\big(3C_{[1,\delta',R]}+C_{[0,R]}\big)\cdot (t-s).
 \end{split}
\eeq
Notice that the map $\delta'\mapsto K(\delta')$, defined by
\bel{K}
K(\delta')~=~3C_{[1,\delta',R]}+C_{[0,R]}~=~3\cdot \Lambda_{M_{\delta}}\cdot \left(2C_{[1,\delta]}+{4R\over \delta}\right)+C_{[0,R]}
\eeq
is strictly decreasing on $(0,\infty)$ with $\ds\lim_{\delta'\to 0+}K(\delta')=\infty$. In particular, consider the modulus of continuity   $\omega:[0,\infty)\to [0,\infty)$ such that 
\bel{mod}
\omega(0)~=~0,\qquad  \omega(\tau)~=~K^{-1}(\tau^{-1/2})\qquad\forall \tau>0.
\eeq
By choosing $\delta'=\omega(t-s)$ in (\ref{ke11}), we obtain 
\[
\big\|u^{\nu}(t,\cdot)-u^{\nu}(s,\cdot)\big\|_{{\bf L}^1([-R,R])}~\leq~2C_{[0,R]}\cdot \omega(t-s)+ (t-s)^{1/2},
\]
and (\ref{l1}) yields the H\"older continuity estimate for $u$ in ${\bf L}^1([-R,R])$
\bel{Holder-1}
\big\|u(t,\cdot)-u(s,\cdot)\big\|_{{\bf L}^1([-R,R])}~\leq~2C_{[0,R]}\cdot \omega(t-s)+ (t-s)^{1/2}.
\eeq
By (\ref{L-b-u}) and the density of the set $ \big\{\ell 2^{-\nu}: 1\leq \ell\leq 2^{\nu},\nu\geq 1\big\}$ in $[0,1]$, we get
\[
\|u(t,\cdot)\|_{{\bf L}^{\infty}(\R)}~\leq~\Gamma_{2}\cdot \left({1\over t^{1/(2+p_1)}}+1\right)\quad \forall t\in (0,1].
\]
%
%
%
Hence, for $ s,t\in [\bar{t}-\delta_0,\bar{t}+\delta_0]$, we have
\[
\begin{split}
\|u(t,\cdot)-u(s,\cdot)\|^2_{{\bf L}^2([-R,R])}&~=~ \big(\|u(t,\cdot)\|_{{\bf L}^{\infty}}+\|u(s,\cdot)\|_{{\bf L}^{\infty}(R)}\big)\cdot \|u(t,\cdot)-u(s,\cdot)\|_{{\bf L}^1([-R,R])}\\[2mm]
&~\leq~\Gamma_2 \cdot 2\cdot \left({1\over (\bar{t}-\delta_0)^{1/(2+p_1)}}+1\right)\cdot \left(2C_{[0,R]}\cdot \omega(t-s)+ (t-s)^{1/2}\right),
\end{split}
\]
which implies that 
\[
\lim_{t\to \bar{t}} \|u(t,\cdot)-u(\bar{t},\cdot)\|^2_{{\bf L}^2([-R,R])}~=~0,\qquad\forall R>0.
\]
By  Lemma \ref{TN1}, for every $\ve>0$, we have 
 \[
 \begin{split}
  \|u(t,\cdot)\|_{{\bf L}^2(\R\backslash[-R_{\ve},R_{\ve}])}&~\leq~\liminf_{\mathcal{I}\ni\nu\to\infty}\|u^{\nu}(t,\cdot)\|_{{\bf L}^2(\R\backslash[-R_{\ve},R_{\ve}])}~\leq~\ve\quad\forall t\in (0,1],
 \end{split}
 \]
and we derive
\[
 \lim_{t\to \bar{t}} \|u(t,\cdot)-u(\bar{t},\cdot)\|^2_{{\bf L}^2(\R)}~\leq~2\ve+\lim_{t\to \bar{t}} \|u(t,\cdot)-u(\bar{t},\cdot)\|^2_{{\bf L}^2([-R_{\ve},R_{\ve}])}~=~2\ve.
 \]
Since the above holds for arbitrary $\ve>0$, the map $t\mapsto u(t,\cdot)$ is continuous at  $\bar{t}\in (0,1]$. 

It remains to prove that this map is continuous at $t=0$. Consider any sequence $t_{n}\to 0+$. Thanks to the uniform bound on $\|u(t,\cdot)\|_{{\bf L}^2(\R)}$ in (\ref{e-s-l2}),  we can, by possibly taking a subsequence, assume the weak convergence $u(t_n,\cdot) \rightharpoonup w$ for some $w\in {\bf L}^2(\R)$. By (\ref{ze-c}) and (\ref{u-l2-b}), we get $w=\bar{u}$ and 
\[
 \limsup_{t_n\to 0+}\|u(t_n\cdot)\|_{{\bf L}^2(\R)}~\leq~\limsup_{t_n\to 0+} \left(e^{\| {\bf G}\|_{\infty}t_n} \cdot\|\bar{u}\|_{{\bf L}^2(\R)}\right)~=~\|\bar{u}\|_{{\bf L}^2(\R)}.
\]
This implies the strong convergence $u(t_n,\cdot)\rightarrow \bar{u}$ in ${\bf L}^2(\R)$ as $t_n\to 0+$.
\v

{\bf 4.  Entropy weak condition.} To complete the proof, we show that $u$ is an entropy weak solution of (\ref{BL})-(\ref{G}). For any given   $k\in \R$, consider a pair of convex entropy and flux 
\[
\eta(u)~=~|u-k|,\qquad q(u)~=~\mathrm{sign} (u-k)\cdot \left(f(u)-f(k)\right).
\]
To obtain (\ref{ei2}), we approximate $(\eta,q)$ by  a sequence of smooth entropies, say 
\[
\eta_n(u)~=~\sqrt{(u-k)^2+n^{-1}},\qquad q_n(u)~=~\int_{k}^{u}\eta'_n(s)f'(s)ds.
\]
For every $n\geq 1$ and $\phi\in\mathcal{C}_{c}^{\infty}(]0,\infty[\times\R)$, by (\ref{u-nu}), (\ref{L-infty}) and (\ref{L-b-u}), we have
\[
\begin{split}
\int\int\Big(\eta_n(u)\phi_t&+q_n(u)\phi_x\Big)dxdt~=~\lim_{\mathcal{I}\ni\nu\to\infty}\int\int\Big(\eta_n(u^{\nu})\phi_t+q_n(u^{\nu})\phi_x\Big)dxdt\\
&=~\lim_{\mathcal{I}\ni\nu\to\infty}\sum_{\ell=0}^{\infty}\int_{t_\ell}^{t_{\ell+1}}\int_{\R}\Big(\eta_n(u^{\nu})\phi_t+q_n(u^{\nu})\phi_x\Big)dxdt\\
&\geq~\lim_{\mathcal{I}\ni\nu\to\infty}\sum_{\ell=0}^{\infty}\int_{\R}\Big(\eta_n\big(u^{\nu}(t_{\ell+1}-,x)\big)\phi(t_{\ell+1},x)-\eta_n\big(u^{\nu}(t_{\ell}+,x)\big)\phi(t_\ell,x) \Big)dxdt\\
&=~\lim_{\mathcal{I}\ni\nu\to\infty}\sum_{\ell=1}^{\infty}\int_{\R}\big[\eta_n\big(u^{\nu}(t_{\ell}-,x)\big)-\eta_n\big(u^{\nu}(t_{\ell}+,x)\big)\big]\phi(t_\ell,x) dx\\
&=~-\lim_{\mathcal{I}\ni\nu\to\infty}\sum_{\ell=1}^{\infty}\int_{\R}\int_{t_{\ell}}^{t_{\ell+1}}\eta'_n\big(u^{\nu}(t_{\ell}-,x)\big)\cdot {\bf G}[u^{\nu}(t_\ell-,\cdot)](x)\cdot \phi(t_\ell,x)  dxdt\\
&=~-\int_{\R}\int_{0}^{\infty}\eta'_n\big(u(t,x)\big)\cdot {\bf G}[u(t,\cdot)](x)\cdot \phi(t,x) dxdt.
\end{split}
\]
Finally, taking $n\to\infty$, we obtain
\[
\begin{split}
\int\int \Big[|u-k|\phi_t+\mathrm{sign} (u-k)&\cdot \big(f(u)-f(k)\big)\phi_x\Big]dxdt~=~\lim_{n\to\infty}\int\int\Big(\eta_n(u)\phi_t+q_n(u)\phi_x\Big)dxdt\\
&=~-\lim_{n\to\infty} \int_{\R}\int_{0}^{\infty}\eta'_n\big(u(t,x)\big)\cdot {\bf G}[u(t,\cdot)](x)\cdot \phi(t,x) dt dx\\
&=~- \int_{\R}\int_{0}^{\infty} \mathrm{sign}\big(u(t,x)-k\big)\cdot {\bf G}[u(t,\cdot)](x)\cdot \phi(t,x) dt dx,
\end{split}
\]
which yields (\ref{ei2}).
\endproof
\section{Uniqueness results and fractional BV regularity}\label{unique}
In this section, we first establish a uniqueness result for bounded total variation entropy week solutions to a scalar balance law with nonlocal source (\ref{BL}) in the spatially periodic case. Furthermore, for ${\bf L}^1$-kernels $K$, we prove well-posedness of ${\bf L}^1(\R)$ entropy weak solutions $u$ to (\ref{BL})-(\ref{G}). For every $\tau>0$, the function $u(\tau,\cdot)$ belongs to a fractional BV space.
\setcounter{equation}{0}
\subsection{Periodic solutions with periodic sources}
For a given $P>0$, let ${\bf L}^2_{\mathrm{per}}(\R)$ denote  the space of $2P$-periodic functions which are square integrable over the interval $[-P,P]$, equipped with the norm
\[
\|g\|_{{\bf L}^2}~\doteq~\left(\int_{-P}^{P}|g(x)|^2dx\right)^{1/2},\qquad g\in {\bf L}^2_{\mathrm{per}}(\R).
\]
Throughout this subsection, we impose the following assumption on the kernel:
\begin{itemize}
\item [{\bf (K)}] The kernel  $K\in\mathcal{C}^1(\R\backslash 2P\mathbb{Z})$ is an odd $2P$-periodic function  and satisfies 
\bel{ap1}
\big|K^{(i)}(x)\big|~\leq~{C_K\over |x|^{i+1}},\qquad x\in [-P,P]\backslash\{0\}, ~ i = 0,1,
\eeq
for some constant $C_K>0$.
\end{itemize}
Under this assumption, we  can define the singular integral operator of periodic convolution type ${\bf G}_{\mathrm{per}}:{\bf L}_{\mathrm{per}}^2(\R)\to {\bf L}_{\mathrm{per}}^2(\R)$ by 
\[
{\bf G}_{\mathrm{per}}[g]~=~\lim_{\ve\to 0+}\int_{(-P,P)\backslash (-\ve,\ve)}K(y)\cdot g(x-y)~dy,\qquad g\in {\bf L}^2_{\mathrm{per}}(\R).
\]
%
On ${\bf L}^2_{\mathrm{per}}(\R)$, we consider the evolution equation 
\bel{EE}
u_t+f(u)_x~=~{\bf G}_{\mathrm{per}}[u],\qquad u(0,\cdot)~=~\bar{u}~\in~{\bf L}^2_{\mathrm{per}}(\R).
\eeq
\begin{definition} {\it By an {\bf entropy weak solution} of (\ref{EE})
we mean a function $u\in \L^1_{loc}([0,\infty[\,\times\R)$ with the following properties.
\begi
\item[(i)] The map $t\mapsto u(t,\cdot)$ is continuous with values in $\L^2_{\mathrm{per}}(\R)$ and
satisfies the initial condition in (\ref{EE}).
\item[(ii)]  For any $k\in \R$ and every nonnegative 
test function $\phi\in\C^1_c(]0,\infty[\,\times\R)$
one has
\bel{ei}\dint  \Big[ |u-k|\phi_t + \big(f(u)-f(k)\big)\hbox{\rm sign}(u-k) \phi_x
+{\bf G}_{\mathrm{per}}[u](x)\hbox{\rm sign}(u-k)\phi  \Big]\, dxdt~\geq~0.\eeq
\endi
}
\end{definition}
A global entropy weak solution of (\ref{EE}) can be obtained as the limit of the flux-splitting approximation described in Section \ref{sec:2}. In the periodic case, the tightness property of the sequence of approximating solutions is not required and  the analysis is considerably simpler, thanks to the continuous embedding ${\bf L}^2([-P,P])\hookrightarrow {\bf L}^1([-P,P])$.

Toward to the uniqueness of a solution to (\ref{EE}) within the class of  bounded total variation functions, we establish an ${\bf L}^1([-P,P])$-bound on ${\bf G}_{\mathrm{per}}[w]$ with $w$ being in a certain class of periodic functions with period $2P$.
\begin{lemma}\label{BV-L1} Let $w$ be  a periodic function with period $2P$ such that 
\[
\int_{-P}^{P}w(x)dx~=~0,\qquad \TV \{w; [-P,P]\}~=~\beta_w~<~\infty.
\]
Then it holds
\bel{L-e-p}
\big\|{\bf G}[w]\big\|_{{\bf L}^1([-P,P])}~\leq~C\big\|w\big\|_{{\bf L}^1([-P,P])}\left(2+3\ln 2+2\ln P+ 2\ln\beta_w-2\ln\big\|w\big\|_{{\bf L}^1([-P,P])}\right).
\eeq
\end{lemma}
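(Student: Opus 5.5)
Fix a cutoff $\delta\in(0,P]$ to be optimized at the end, and split the periodic kernel on $[-P,P]$ as $K=K\chi_{\{|y|\le\delta\}}+K\chi_{\{\delta<|y|\le P\}}$. Correspondingly ${\bf G}[w]={\bf G}_{\delta}^{\rm in}[w]+{\bf G}_{\delta}^{\rm out}[w]$, where
\[
{\bf G}_{\delta}^{\rm in}[w](x)~=~\lim_{\ve\to0+}\int_{\ve<|y|\le\delta}K(y)\big(w(x-y)-w(x)\big)dy,\qquad
{\bf G}_{\delta}^{\rm out}[w](x)~=~\int_{\delta<|y|\le P}K(y)\,w(x-y)\,dy.
\]
In the first integral we may subtract $w(x)$ because $K$ is odd, so $\int_{\ve<|y|\le\delta}K(y)\,dy=0$. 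The plan is to bound the inner part by $\beta_w$ times $\delta$, and the outer part by $\|w\|_{{\bf L}^1}$ times a logarithm of $P/\delta$. We then choose $\delta\approx \|w\|_{{\bf L}^1}/\beta_w$, which produces the term $\ln P+\ln\beta_w-\ln\|w\|_{{\bf L}^1}$ in (\ref{L-e-p}). Throughout, $C$ denotes a multiple of $C_K$.

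\emph{Outer part.} By Fubini and periodicity, together with $|K(y)|\le C_K/|y|$ from (\ref{ap1}),
\[
\|{\bf G}_{\delta}^{\rm out}[w]\|_{{\bf L}^1([-P,P])}~\le~\|w\|_{{\bf L}^1([-P,P])}\int_{\delta<|y|\le P}\frac{C_K}{|y|}dy~=~2C_K\|w\|_{{\bf L}^1}\ln\frac{P}{\delta}.
\]

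\emph{Inner part.} Use oddness once more to symmetrize:
\[
{\bf G}_{\delta}^{\rm in}[w](x)~=~\lim_{\ve\to 0+}\int_{\ve}^{\delta}K(y)\big(w(x-y)-w(x+y)\big)dy .
\]
Integrating in $x$ over a period and using periodicity of $w$ gives
\[
\int_{-P}^{P}|w(x-y)-w(x+y)|\,dx~\le~2y\,\beta_w .
\]
This is the standard ${\bf L}^1$-translation estimate for BV functions, applied to a shift by $2y$. Hence
\[
\|{\bf G}_{\delta}^{\rm in}[w]\|_{{\bf L}^1}~\le~\int_0^\delta \frac{C_K}{y}\,2y\,\beta_w\,dy~=~2C_K\beta_w\delta .
\]
The limit $\ve\to0$ is harmless because the integrand is now absolutely integrable. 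This bound alone controls the inner part, but the target has an additive constant and, from its structure, apparently uses $\ln 2$ and a factor $3$. A cleaner route may instead integrate by parts against a primitive of $K$. I would keep the simple bound above and let the constants absorb into $C$.

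\emph{Optimization and the zero-mean condition.} Combining the two parts,
\[
\|{\bf G}[w]\|_{{\bf L}^1}~\le~2C_K\Big(\beta_w\delta+\|w\|_{{\bf L}^1}\ln\frac{P}{\delta}\Big).
\]
Choose $\delta=\|w\|_{{\bf L}^1}/\beta_w$, which yields $2C_K\|w\|_{{\bf L}^1}\big(1+\ln P+\ln\beta_w-\ln\|w\|_{{\bf L}^1}\big)$, of the form (\ref{L-e-p}). The main obstacle is guaranteeing $\delta\le P$, that is $\|w\|_{{\bf L}^1}\le P\beta_w$, so that the logarithm is nonnegative and the cutoff is admissible. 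This is exactly where $\int_{-P}^Pw=0$ enters. A zero-mean function must change sign, so $|w(x)|\le \beta_w$ for all $x$, and hence $\|w\|_{{\bf L}^1}\le 2P\beta_w$. That gives only $\delta\le 2P$, which accounts for the extra $\ln 2$-type constants in the statement. To handle the case $P<\delta\le 2P$, either take $\delta=\min\{P,\|w\|_{{\bf L}^1}/\beta_w\}$ and check directly that the resulting bound is still dominated by the right-hand side of (\ref{L-e-p}), or use $\delta=\|w\|_{{\bf L}^1}/(2\beta_w)$. The latter gives the $\ln 2$ terms and the constant $2+3\ln2$, up to adjusting $C$.
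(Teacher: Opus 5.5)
Your proof is correct, and it takes a genuinely different route from the paper. The paper never splits the kernel. It first computes, case by case in $x$, the explicit bound $\|{\bf G}_{\mathrm{per}}[\chi_{[a,b]}]\|_{{\bf L}^1([-P,P])}\le C(b-a)\bigl(2+5\ln 2+2\ln P-2\ln(b-a)\bigr)$ for a single interval. It then approximates $w$ by piecewise constant functions and slices these horizontally into $\sum_i\rho_i\chi_{[a_i,b_i]}$, invoking the zero mean at that step. Finally it recombines the logarithms by Jensen's inequality for $x\mapsto x\ln x$. That last step needs $\sum_i|\rho_i|\le\frac12\TV\{w;[-P,P]\}$, which holds with equality for horizontal slicing, although the paper writes the inequality the other way.

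Your near/far splitting at scale $\delta$ works differently. You use the ${\bf L}^1$-translation bound for $|y|\le\delta$ and Young's inequality for $|y|>\delta$, then optimize with $\delta\approx\|w\|_{{\bf L}^1}/\beta_w$. This avoids the approximation and layer-cake bookkeeping, and Tonelli shows the principal value exists a.e. It also uses only oddness and $|K(y)|\le C_K/|y|$. In exchange, the paper's route gives an explicit per-interval estimate.

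You can drop your case discussion at the end. Since $w(-P)=w(P)$, the variation over one period is at least twice the oscillation. So zero mean gives $|w|\le\beta_w/2$ and $\|w\|_{{\bf L}^1}\le P\beta_w$, hence $\delta=\|w\|_{{\bf L}^1}/\beta_w\le P$ directly. You then get $C_K\|w\|_{{\bf L}^1}\bigl(2+2\ln P+2\ln\beta_w-2\ln\|w\|_{{\bf L}^1}\bigr)$, which is already below the stated right-hand side with $C=C_K$. Even your fallback choice $\delta=\|w\|_{{\bf L}^1}/(2\beta_w)$ produces $1+2\ln 2$ in place of $2+3\ln 2$, so no enlargement of $C$ is needed. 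The $\ln 2$ terms in the statement come from the paper's explicit single-interval computation, not from the constraint $\delta\le P$.
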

{\bf Proof.} {\bf 1.} We first provide an ${\bf L}^1([-P,P])$-bound on ${\bf G}_{\mathrm{per}}\big[\chi_{[a,b]}\big]$ where $\chi_{[a,b]}$ denotes the characteristic function of the interval $[a,b]$, extended to the whole real line by $2P$ periodicity. For every $[a,b]\subseteq [-P,P]$, we prove that 
\bel{L-c}
\big\|{\bf G}_{\mathrm{per}}\big[\chi_{[a,b]}\big]\big\|_{{\bf L}^1([-P,P])}~\leq~C(b-a)\big(2+5\ln 2+2\ln P-2 \ln (b-a)\big|\big).
\eeq
 For every $x\in [-P,P]\backslash\{0\}$, we have 
\[
\begin{split}
{\bf G}_{\mathrm{per}}\big[\chi_{[a,b]}\big](x)&~=~\lim_{\ve\to 0+}\left(\int^{P}_{\ve}K(y)\chi_{[a,b]}(x-y)dy +\int_{-P}^{-\ve}K(y)\chi_{[a,b]}(x-y)dy\right)\\[2mm]
&~=~\lim_{\ve\to 0+}\left(\int^{P}_{\ve}K(y)\chi_{[a,b]}(x-y)dy-\int_{\ve}^{P}K(y)\chi_{[a,b]}(x+y)dy \right)\\[2mm]
&~=~\lim_{\ve\to 0+}\left(\int^{P}_{\ve}K(y)\, \left[\chi_{[x-b,x-a]}(y)-\chi_{[a-x,b-x]}(y)\right] dy \right).
\end{split}
\]
Assume that $[a,b]\subseteq [0,P]$. Then three cases are considered: 
\begin{itemize}
\item If $b<x<P$, then $[x-b,x-a]\subset (0,P)$ and 
\[
\begin{split}
\big|{\bf G}_{\mathrm{per}}\big[\chi_{[a,b]}\big](x)\big|~=~\left|\int_{x-b}^{x-a}K(y)dy\right|~\leq~C \int_{x-b}^{x-a}{1\over y}~dy~\leq~C \big(\ln (x-a)-\ln(x-b)\big).
\end{split}
\]
\item If $-P<x<a$, then $(a-x,b-x)\subset (0,P)$ and 
\[
\begin{split}
\big|{\bf G}_{\mathrm{per}}\big[\chi_{[a,b]}\big](x)\big|~=~\left|\int_{a-x}^{b-x}K(y)dy\right|~\leq~C \int_{a-x}^{b-x}{1\over y}~dy~\leq~C\cdot \big(\ln(b-x)-\ln(a-x)\big).
\end{split}
\]
\item Otherwise, if $x\in (a,b)$, then 
\[
\begin{split}
\big|{\bf G}_{\mathrm{per}}\big[\chi_{[a,b]}\big](x)\big|&~=~\lim_{\ve\to 0+}\left|\int_{\ve}^{x-a}K(y)dy-\int_{\ve}^{b-x}K(y)dy\right|~\leq~\left|\int_{x-a}^{b-x}|K(y)|~dy\right|\\[2mm]
&~\leq~C \left|\int_{x-a}^{b-x}{1\over y}~dy\right|~\leq~C \left|\ln(b-x)-\ln(x-a)\right|.
\end{split}
\]
\end{itemize}
By  a direct computation, we estimate
\[
\begin{split}
\left\|{\bf G}_{\mathrm{per}}\big[\chi_{[a,b]}\right\|_{{\bf L}^1[-P,P]}&~\leq~C \int_{-P}^{a}\Big(\ln(b-x)-\ln(a-x)\Big)dx\\
&\qquad+C\int_{a}^{b}\left|\ln(b-x)-\ln(x-a)\right|dx+C\int_{b}^{P}\Big(\ln (x-a)-\ln(x-b)\Big)dx\\
&~=~2C (b-a) \big[\ln2- \ln(b-a)\big]+C \big[(P+b)\ln(P+b)-(P+a)\ln (P+a)\big]\\[2mm]
&\qquad\qquad\qquad + C\big[(P-a)\ln(P-a)-(P-b)\ln(P-b)\big]\\[2mm]
&~\leq~2C (b-a) \big[\ln2-\ln(b-a)\big]+C (b-a)\big[1+\ln 2+\ln P\big]\\[2mm]
&\qquad\qquad\qquad +C(b-a)\big[1+\ln P\big]\\[2mm]
&~\leq~C(b-a)\big[2+3\ln 2+2\ln P-2\ln (b-a)\big]
\end{split}
\]
Similarly, we can  also prove that for every $[a,b]\subseteq [-P,0]$, it holds
\[
\left\|{\bf G}_{\mathrm{per}}\big[\chi_{[a,b]}\right\|_{{\bf L}^1[-P,P]}~\leq~C(b-a)\big[2+3\ln 2+2\ln P-2\ln (b-a)\big].
\]
Hence, for every $-P\leq a<0<b\leq P$, using the convexity of the map $x\mapsto x\ln x$, we estimate  
\[
\begin{split}
\big\|{\bf G}_{\mathrm{per}}\big[\chi_{[a,b]}\big]&\big\|_{{\bf L}^1[-P,P]}~\leq~\left\|{\bf G}_{\mathrm{per}}\big[\chi_{[a,0]}\big]\right\|_{{\bf L}^1[-P,P]}+\left\|{\bf G}_{\mathrm{per}}\big[\chi_{[0,b]}\big]\right\|_{{\bf L}^1[-P,P]}\\[2mm]
&~\leq~C |a| \big(2+3\ln 2+2\ln P-2\ln|a|\big)+C|b|\big(2+3\ln 2+2\ln P-2\ln|b|\big)\\[2mm]
&~\leq~ C(b-a)\big(2+3\ln 2+2\ln P\big)-2C (|a| \ln |a|+|b| \ln |b|)\\[2mm]
&~\leq~ C(b-a)\big(2+3\ln 2+2\ln P\big)-2C (b-a) \big(\ln (b-a)-\ln 2\big)\\[2mm]
&~\leq~C(b-a)\big(2+5\ln 2+2\ln P-2 \ln (b-a)\big),
\end{split}
\]
which proves (\ref{L-c}).
\medskip

{\bf 2.} For every $\ve>0$, we can approximate $w$  with a piecewise constant function $w_\ve$ such that 
\[
\int_{-P}^{P}w_{\ve}(x)dx~=~0,\qquad\quad \lim_{\ve\to 0+}\big\|w_{\ve}\big\|_{{\bf L}^1([-P,P])}~=~ \|w\|_{{\bf L}^1([-P,P])},
\]
and 
\[
\TV\{w_{\ve};[-P,P]\}~\leq~\TV\{w;[-P,P]\}.
\]
Since the average of $w_{\ve}$ is zero in $[-P,P]$, by slicing the graph of $w_\ve$ horizontally, we can write $w_\ve$ (restricted to
one period $[-P,P]$) as a sum of characteristic functions
\[
w_{\ve}(x)~=~\sum_{i=1}^N\rho_i\chi_{[a_i,b_i]}(x),\qquad x\in [-P,P],
\]
such that 
\[
\big\|w_{\ve}\big\|_{{\bf L}^1([-P,P])}~=~\sum_{i=1}^{N}|\rho_i| (b_i-a_i),\qquad \TV\{w_{\ve};[-P,P]\}~\leq~2\sum_{i=1}^N|\rho_i|.
\]
By (\ref{L-c}), we have
\bel{esGper1}
\begin{split}
\big\|{\bf G}_{\mathrm{per}}[w_{\ve}]&\big\|_{{\bf L}^1([-P,P])}~\leq~\sum_{i=1}^{N}|\rho_i|\left\|{\bf G}_{\mathrm{per}}[\chi_{[a_i,b_i]}]\right\|_{{\bf L}^1([-P,P])} \\[2mm]
&~\leq~C(2+5\ln 2+2\ln P) \sum_{i=1}^{N} |\rho_i| (b_i-a_i)- 2C\sum_{i=1}^N |\rho_i| (b_i-a_i)\ln (b_i-a_i)\\
&~\leq~C\big(2+5\ln 2+2\ln P\big)\big\|w_{\ve}\big\|_{{\bf L}^1([-P,P])}- 2C\sum_{i=1}^N |\rho_i| (b_i-a_i)\ln (b_i-a_i).
\end{split}
\eeq
Set $\rho=\ds\sum_{i=1}^N|\rho_i|$. By applying Jensen’s inequality to the convex function $x\mapsto x\ln x$, we get 
\[
\begin{split}
\sum_{i=1}^N|\rho_i| (b_i-a_i)&\ln (b_i-a_i)~=~\rho\cdot\sum_{i=1}^N{|\rho_i|\over \rho} (b_i-a_i)\ln (b_i-a_i)\\
&~\geq~\rho\cdot \left(\sum_{i=1}^{N}|\rho_i|(b_i-a_i)/\rho\right)\ln\left(\sum_{i=1}^{N}|\rho_i|(b_i-a_i)/\rho\right)\\
&~\geq~\big\|w_{\ve}\big\|_{{\bf L}^1([-P,P])}\cdot\left(\ln\big\|w_{\ve}\big\|_{{\bf L}^1([-P,P])}-\ln\big( \TV\{w_{\ve};[-P,P]\}\big)+\ln 2 \right),
\end{split}
\]
and (\ref{esGper1}) yields 
\begin{multline*}
\big\|{\bf G}_{\mathrm{per}}[w_{\ve}]\big\|_{{\bf L}^1([-P,P])}\\~\leq~C\big\|w_{\ve}\big\|_{{\bf L}^1([-P,P])}\left(2+3\ln 2+2\ln P+2\ln\big( \TV\{w_{\ve};[-P,P]\}\big)- 2\ln\big\|w_{\ve}\big\|_{{\bf L}^1([-P,P])}\right).
\end{multline*}
Finally, taking $\ve$ to $0+$, we achieve (\ref{L-e-p}).
\endproof
\medskip

From Lemma \ref{BV-L1}, one can follow the same argument in \cite[Theorem 4.3]{BN} to obtain a partial result on uniqueness. 

\begin{theorem}\label{p-uniqueness} Assume that conditions {\bf (F)} and {(\bf K)} hold. Let $u,v$ be  entropy weak solutions of the spatially periodic Cauchy problem (\ref{EE}), with the same initial data $u(0,\cdot)=v(0,\cdot)=\bar{u}\in {\bf L}^2_{\mathrm{per}}(\R)$. If  the total variation of both $u(t,\cdot)$ and $v(t,\cdot)$ over $[-P,P]$ remains uniformly bounded in $[0,T]$, then $u$ and $v$ coincide on $[0,T]\times \R$.
\end{theorem}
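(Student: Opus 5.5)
We follow the Kružkov doubling-of-variables argument of \cite[Theorem 4.3]{BN}, with Lemma \ref{BV-L1} replacing the Hilbert transform estimate. Set $w(t,\cdot)\doteq u(t,\cdot)-v(t,\cdot)$, which is $2P$-periodic, and define
\[
Z(t)~\doteq~\big\|w(t,\cdot)\big\|_{{\bf L}^1([-P,P])}.
\]

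\textbf{Step 1 (Kato-type inequality).} Both $u$ and $v$ satisfy the entropy inequalities (\ref{ei}). The sources ${\bf G}_{\mathrm{per}}[u]$ and ${\bf G}_{\mathrm{per}}[v]$ belong to ${\bf L}^2_{\mathrm{per}}\subset{\bf L}^1_{\mathrm{loc}}$, so they act as given ${\bf L}^1$ forcing terms. Doubling variables, using periodicity in $x$, and letting the test functions concentrate on the diagonal therefore yields, in the sense of distributions on $]0,T[\,$,
\[
{d\over dt}Z(t)~\leq~\int_{-P}^{P}\big|{\bf G}_{\mathrm{per}}[w(t,\cdot)](x)\big|\,dx .
\]
The ${\bf L}^2$-continuity of $t\mapsto u(t,\cdot),v(t,\cdot)$ gives $Z(0)=0$ and makes $Z$ continuous.

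\textbf{Step 2 (zero mean).} Integrating (\ref{ei}) with $k=\pm\|u\|_{\infty}$ (or choosing test functions constant in $x$) shows that $\int_{-P}^P u\,dx$ evolves by $\int_{-P}^P{\bf G}_{\mathrm{per}}[u]\,dx$. This integral vanishes because $K$ is odd: the Fourier coefficient of ${\bf G}_{\mathrm{per}}[g]$ at frequency $0$ is $\hat K(0)\hat g(0)$, and $\hat K(0)=\int_{-P}^{P}K=0$ as a principal value. The same holds for $v$, so $\int_{-P}^P w(t,x)\,dx=0$ for all $t$. The uniform BV hypothesis gives $\TV\{w(t,\cdot);[-P,P]\}\le\beta$ on $[0,T]$ for some $\beta<\infty$.

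\textbf{Step 3 (Osgood-type ODE).} With zero mean and bounded variation, Lemma \ref{BV-L1} applies to $w(t,\cdot)$ and, together with Step 1, gives
\[
\dot Z~\leq~C\,Z\,\big(A-2\ln Z\big),\qquad A~\doteq~2+3\ln2+2\ln P+2\ln\beta .
\]
Suppose $Z(t_1)>0$ at some $t_1\in\,]0,T]$, and let $t_0<t_1$ be the last time with $Z(t_0)=0$. On $]t_0,t_1]$ set $\zeta\doteq-\ln Z$, which satisfies $\dot\zeta\geq -C(A+2\zeta)$. This is a linear differential inequality, so $\zeta$ stays finite on $]t_0,t_1]$. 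Equivalently, $s\mapsto s(A-2\ln s)$ satisfies the Osgood condition $\int_{0}{ds\over s(A-2\ln s)}=\infty$, and hence $Z(t_0)=0$ forces $Z\equiv0$ on $[t_0,t_1]$, a contradiction. To treat times where $Z$ is large, we first reduce to short intervals on which $Z$ is small enough that $A-2\ln Z>0$, using the continuity of $Z$. We conclude $Z\equiv0$ on $[0,T]$, so $u=v$.

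\textbf{Main obstacle.} The delicate step is Step 1: rigorously justifying the Kružkov doubling when the source depends nonlocally on the solution and is only ${\bf L}^2$ in space, and then identifying the source contribution. Specifically, one must bound
\[
\mathrm{sign}(u-v)\big({\bf G}_{\mathrm{per}}[u]-{\bf G}_{\mathrm{per}}[v]\big)~\leq~\big|{\bf G}_{\mathrm{per}}[w]\big|
\]
after the limit, which requires convergence of the mollified sources in ${\bf L}^1_{\mathrm{loc}}$. Here we would follow \cite{BN} verbatim, using linearity of ${\bf G}_{\mathrm{per}}$ and its ${\bf L}^2$ boundedness, which follows from (\ref{ap1}) and oddness.
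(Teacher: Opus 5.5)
Your proposal is correct and follows the same route as the paper. Both arguments get zero mean of $w=u-v$ from the oddness of $K$, use Lemma~\ref{BV-L1} to obtain $\|{\bf G}_{\mathrm{per}}[w]\|_{{\bf L}^1([-P,P])}\le C Z\,(A-2\ln Z)$, and apply Osgood's criterion to $\dot Z\le \|{\bf G}_{\mathrm{per}}[w]\|_{{\bf L}^1([-P,P])}$.

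The one difference is in deriving this last inequality: you use Kruzkov doubling, while the paper uses the uniform BV bounds to make $Z$ Lipschitz and then invokes the ${\bf L}^1$-contractivity of the semigroup of $u_t+f(u)_x=0$. In your route the source term is simply bounded by $|{\bf G}_{\mathrm{per}}[u](t,x)-{\bf G}_{\mathrm{per}}[v](s,y)|$, so your \emph{main obstacle} is routine. Also, in Step 3 the linear inequality for $\zeta=-\ln Z$ shows that $\zeta$ cannot come down from $+\infty$ at $t_0$, not that it \emph{stays finite}.
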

{\bf Proof.} For every $w\in {\bf L}^2_{\mathrm{per}}(\R)\cap \mathcal{C}^1(\R)$, it holds 
\[
\begin{split}
\int_{-P}^{P}{\bf G}_{\mathrm{per}}[w](x)dx&=~\int_{-P}^{P}\left(\lim_{\ve\to 0+}\int_{(-P,P)\backslash (-\ve,\ve)}K(y) \, w(x-y)dy\right)dx\\
&~=~\int_{-P}^{P}\left(\lim_{\ve\to 0+}\int_{\ve}^{P} K(y)\, \big[w(x-y)-w(x+y)\big]dy\right)dx.
\end{split}
\]
By (\ref{ap1}), we have  
\[
\big|K(y)\cdot\big[w(x-y)-w(x+y)\big]\big|~\leq~|yK(y)|\cdot \left|{w(x-y)-w(x+y)\over y}\right|~\leq~2C_K\|w\|_{\mathcal{C}^1(\R)},
\]
which implies that 
\bel{11}
\int_{-P}^{P}{\bf G}_{\mathrm{per}}[w](x)dx~=~\lim_{\ve\to 0+}\int_{\ve}^{P}K(y)\left(\int_{-P}^{P} \big[w(x-y)-w(x+y)\big]dx\right)dy~=~0.
\eeq
By the density of $ {\bf L}^2_{\mathrm{per}}(\R)\cap \mathcal{C}^1(\R)$ in ${\bf L}^2_{\mathrm{per}}(\R)$ and the continuity of ${\bf G}_{\mathrm{per}}$, we get
\[
\int_{-P}^{P}{\bf G}_{\mathrm{per}}[w](x)dx=0 \qquad\forall w\in {\bf L}^2_{\mathrm{per}}(\R).
\]
Hence, since $u,v$ are both weak solutions of (\ref{EE}) with $u(t,\cdot),v(t,\cdot)\in {\bf L}^2_{\mathrm{per}}$, their average value remains constant in time
\[
\int_{-P}^{P}u(t,x)~=~\int_{-P}^{P}u(0,x)dx~=~\int_{-P}^{P}\bar{u}dx~=~\int_{-P}^{P}v(0,x)dx~=~\int_{-P}^{P}v(t,x).
\]
In particular, the function  $w\doteq u-v$ is in ${\bf L}^2_{\mathrm{per}}(\R)$ and satisfies 
\[
  \sup_{t\in [0,T]}\tv\{w(t,\cdot);[-P,P]\}~=~\beta~<~\infty,\qquad \int_{-P}^{P}w(t,x)dx~=~0\qquad\forall t\in [0,T].
\]
By Lemma \ref{BV-L1}, setting $Z(t)\doteq \|u(t,\cdot)-v(t,\cdot)\|_{{\bf L}^1([-P,P])}$, we get
\bel{esG1}
\big\|{\bf G}[w(t,\cdot)]\big\|_{{\bf L}^1([-P,P])}~\leq~CZ(t)\, \left[2+3\ln 2+2\ln P+2\ln\beta-2\ln Z(t)\right].
\eeq
The uniform BV bounds on $u,v$ imply that the maps $t\mapsto u(t,\cdot)$ and  $t\mapsto v(t,\cdot)$ are both Lipschitz continuous with values in ${\bf L}^1([-P,P])$. Thus, the map $t\mapsto Z(t)$ is also Lipschitz continuous and a.e. differentiable in $[0,T]$. Finally, since the conservation law $u_t+f(u)_x=0$ generates a contractive semigroup, we obtain 
\[
\begin{split}
{d\over dt}Z(t)&~\leq~\big\|{\bf G}[w(t,\cdot)]\big\|_{{\bf L}^1([-P,P])}~\leq~CZ(t)\, \left[2+3\ln 2+2\ln P+2\ln\beta-2\ln Z(t)\right]. 
\end{split}
\]
By  Osgood's criterion, $Z(0)=0$ implies $Z(t)=0$ for all $t\in [0,T]$. This establishes the uniqueness of BV solutions in the spatially periodic case.
\endproof

\subsection{${\bf L}^1$-kernel $K$}\label{Uni-L1}
In this subsection, we study the well-posedness and generalized BV regularity entropy weak solutions to  the nonlinear balance laws with a singular source term (\ref{BL}) for kernel $K=K_2\in {\bf L}^1(\R)$ satisfying 
\bel{K1-1}
\|K\|_{{\bf L}^1(\R)}~\leq~L_K,\qquad \left|K^{(i)}(x)\right|~\leq~{C_K\over |x|^{i+1}}\qquad \text{for} \, i=0,1.
\eeq
The flux $f$ satisfies assumption {\bf (F)} for some exponents  $1\leq p_1\leq p_2 < p_1+1$ such that 
\bel{t-gamm}
\tilde{\gamma}_p~\doteq~{p_2\over p_1+1}~\in~(0,1).
\eeq
In this framework, an entropy weak solution of the Cauchy problem (\ref{BL}) with initial data 
\bel{ini-da}
u(0,\cdot)~=~\bar{u}~\in~{\bf L}^1(\R)
\eeq
is defined as follows:
\begin{definition}\label{Def2} A function $u\in \L^1_{loc}([0,\infty[\,\times\R)\cap {\bf L}^\infty_{loc}\big(]0,\infty[, {\bf L}^{\infty}(\R)\big)$ is an entropy weak solution of (\ref{BL})-(\ref{G}) with initial condition (\ref{ini-da}) if $u$ satisfies the following properties: 
\begi
\item[(i)] The map $t\mapsto u(t,\cdot)$ is continuous with values in $\L^1(\R)$ and
satisfies the initial condition in (\ref{ini-da}).
\item[(ii)]  For any $k\in \R$ and every nonnegative 
test function $\phi\in\C^1_c(]0,\infty[\,\times\R)$
one has
\bel{ei1}\dint  \Big[ |u-k|\phi_t + \Big(f(u)-f(k)\Big)\hbox{\rm sign}(u-k) \phi_x
+{\bf G}[u(t,\cdot)](x)\hbox{\rm sign}(u-k)\phi  \Big]\, dxdt~\geq~0.\eeq
\endi
\end{definition}

To prove this, we first assume  $\bar{u}\in {\bf L}^1(\R)\cap  {\bf L}^2(\R)$ and recall the approximating solution   $u^{\nu}$ defined in  (\ref{u-nu}):
\[
\begin{cases}
u^{\nu}(0,\cdot)~=~\bar{u},\qquad u^{\nu}(t_{\ell},\cdot)~=~u^{\nu}(t_\ell-,\cdot)+2^{-\nu}\cdot {\bf G}[u^{\nu}(t_\ell-,\cdot)],\qquad \ell=1,2,\dots,\\[4mm]
u^{\nu}(t,\cdot)~=~S_{t-t_\ell}\big(u^{\nu}(t_\ell,\cdot)\big),\qquad t\in [t_\ell,t_{\ell+1}[,\quad \ell=0,1,2,\dots
\end{cases}
\]
For every $\ell\geq 1$, one has
\[
\begin{split}
\big\|u^{\nu}(t_\ell,\cdot)\big\|_{{\bf L}^1(\R)}& ~\leq~ \bigl(1+2^{-\nu} \, \|K\|_{{\bf L}^1(\R)}\bigr) \cdot \|u^{\nu}(t_\ell-,\cdot)\|_{{\bf L}^1(\R)}\\[2mm]
& ~\leq~ \bigl(1+2^{-\nu}L_K\bigr) \cdot \|u^{\nu}(t_{\ell-1},\cdot)\|_{{\bf L}^1(\R)} ~\leq~ \bigl(1+2^{-\nu} L_K\bigr)^{2^{\nu}t_\ell} \cdot \|\bar{u}\|_{{\bf L}^1(\R)}
\end{split}
\]
and this yields 
\bel{u-L1}
\big\|u^{\nu}(t,\cdot)\big\|_{{\bf L}^1(\R)}~\leq~ \bigl(1+2^{-\nu}L_K\bigr)^{2^{\nu}t} \cdot \|\bar{u}\|_{{\bf L}^1(\R)} ~\leq~ e^{L_K t} \cdot \|\bar{u}\|_{{\bf L}^1(\R)} \quad\forall t\geq 0.
\eeq
Using (\ref{u-L1}) and Lemma (\ref{F-BV}), we establish  a H\"older continuity on  the backward characteristics which are defined in the previous section.
\begin{lemma}\label{bb-c} For every given $\nu>0$,  let $t\mapsto x(t)$ be any characteristic on $[0,1]$ for the approximate solution $u^{\nu}$. For every $0\leq \tau_1<\tau_2\leq 1$, it holds 
\bel{Holder-11}
|x(\tau_2)-x(\tau_1)|~\leq~\Gamma_3\cdot (\tau_2-\tau_1)^{1-\tilde{\gamma}_p},
\eeq
where the constant $\Gamma_3> 0$ is computed by 
\bel{gamma-T-11}
\Gamma_3~\doteq~C_f\cdot \left[2^{p_1}\cdot \left(1+{1+p_1\over C_f}\cdot \left(f(0)+2 e^{L_K} \cdot \|\bar{u}\|_{{\bf L}^1(\R)} \right)\right)\right]^{\tilde{\gamma}_p}.
\eeq
\end{lemma}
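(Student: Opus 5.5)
The plan is to mimic the proof of Lemma \ref{hc-bb-u}, but with the entropy $\eta(u)=|u|$ (equivalently, working with the nonnegative/nonpositive parts) and the ${\bf L}^1$ bound (\ref{u-L1}) in place of $\eta=u^2/2$ and the ${\bf L}^2$ bound (\ref{L2-u}). As there, I split $\bar u=\bar u^++\bar u^-$ and the source ${\bf G}[u^\nu(t_\ell-,\cdot)]=G^{\nu,+}+G^{\nu,-}$, and I build the auxiliary approximations $u^{\nu,\pm}$. Comparison then gives $u^{\nu,-}\le u^\nu\le u^{\nu,+}$. Since $\|G^{\nu,\pm}(t_\ell,\cdot)\|_{{\bf L}^1}\le L_K\|u^\nu(t_\ell-,\cdot)\|_{{\bf L}^1}$, we have $\|u^{\nu,\pm}(t,\cdot)\|_{{\bf L}^1}\le \|\bar u\|_{{\bf L}^1}+\int_0^t L_K e^{L_Ks}\|\bar u\|_{{\bf L}^1}ds\le 2e^{L_K}\|\bar u\|_{{\bf L}^1}$ on $[0,1]$. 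With $x^\pm$ the minimal backward characteristics of $u^{\nu,\pm}$ through $(\tau_2,x(\tau_2))$, the sandwich (\ref{hol}) holds, and the two cases reduce to bounding $x^+(\tau_2)-x^+(\tau_1)$ from above (and symmetrically $x^-(\tau_1)-x^-(\tau_2)$).

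For Case 1, I apply the divergence theorem to the conservation law itself (entropy $\eta(u)=u$, flux $f(u)$; this is legitimate since $u^{\nu,+}\ge0$) on the region $\{s\in[\tau_1,\tau_2],\ z\ge x^+(s)\}$. Along a genuine characteristic, $\dot x^+=f'(u^{\nu,+})$, so the boundary flux term is $u f'(u)-f(u)$, evaluated at $u=u^{\nu,+}(s,x^+(s))$. The jumps at the times $t_\ell$ only add positive mass, already accounted for in the ${\bf L}^1$ bound. This gives
\[
\int_{\tau_1}^{\tau_2}\Big[u f'(u)-f(u)+f(0)\Big]\,ds~\le~2e^{L_K}\|\bar u\|_{{\bf L}^1}+(\tau_2-\tau_1)f(0),
\]
where the $f(0)$ terms come from normalizing the flux to vanish at $u=0$ (the outgoing flux at $+\infty$ is $f(0)$). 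This explains the appearance of $f(0)$ in $\Gamma_3$.

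The key pointwise inequality uses {\bf (F)}: for $\omega\ge0$, $\omega f'(\omega)-f(\omega)+f(0)=\int_0^\omega s f''(s)\,ds\ge \frac{\omega}{2}\big[f'(\omega)-f'(\omega/2)\big]\ge C_f2^{-1-p_1}\omega^{1+p_1}$. Combined with $|f'(\omega)|\le C_f(1+\omega)^{p_2}$, this yields a lower bound of the form $c\,(|\dot x^+|/C_f)^{1/\tilde\gamma_p}-c'$, with $\tilde\gamma_p=p_2/(p_1+1)$. Jensen's inequality then gives $\big((x^+(\tau_2)-x^+(\tau_1))/(\tau_2-\tau_1)\big)^{1/\tilde\gamma_p}\lesssim 1+A/(\tau_2-\tau_1)\le(1+A)/(\tau_2-\tau_1)$ for $\tau_2-\tau_1\le1$, with $A\sim\frac{1+p_1}{C_f}(f(0)+2e^{L_K}\|\bar u\|_{{\bf L}^1})$. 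Raising to the power $\tilde\gamma_p$ produces (\ref{Holder-11}). Case 2 is symmetric, using $u^{\nu,-}\le0$ and the region to the left.

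The main obstacle is the bookkeeping of the flux normalization. Since we work in ${\bf L}^1$ rather than with a quadratic entropy, the boundary flux is $uf'(u)-f(u)$, and we need $f(0)$ handled correctly, replacing $f$ by $f-f(0)$, which does not change the equation. We also need to check that minimal backward characteristics of $u^{\nu,+}$ stay genuine across the source times $t_\ell$, so that $\dot x^+=f'(u^{\nu,+})$ a.e.; this point was already used in Lemma \ref{hc-bb-u}.
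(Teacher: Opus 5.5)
Your plan is the paper's proof essentially step for step. It uses the sandwich $x^+\le x\le x^-$ via $u^{\nu,\pm}$, the ${\bf L}^1$ bound $2e^{L_K}\|\bar u\|_{{\bf L}^1(\R)}$, the divergence theorem for $(u,f(u))$ on the region to the right of $x^+$, a coercivity bound for $\omega f'(\omega)-f(\omega)$, and Jensen.

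The one thing to correct is the coercivity constant. Restricting to $[\omega/2,\omega]$ gives only $C_f2^{-1-p_1}\omega^{1+p_1}$. That puts $2^{1+p_1}/C_f$ instead of $(1+p_1)/C_f$ inside $\Gamma_3$, so your ``$A\sim\frac{1+p_1}{C_f}(\cdots)$'' does not follow from your own inequality, and you get a strictly larger Hölder constant than the one stated. To obtain the stated $\Gamma_3$, integrate $f'(\omega)-f'(s)\ge C_f(\omega-s)^{p_1}$ over all of $[0,\omega]$. This gives $\omega f'(\omega)-f(\omega)+f(0)\ge \frac{C_f}{1+p_1}\omega^{1+p_1}$, which is what the paper uses.

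A smaller point concerns $f(0)$. Accounting for the outgoing flux $f(0)$ at $+\infty$ already yields $\int_{\tau_1}^{\tau_2}[uf'(u)-f(u)+f(0)]\,ds\le 2e^{L_K}\|\bar u\|_{{\bf L}^1(\R)}$. The extra $(\tau_2-\tau_1)f(0)$ on your right-hand side is not produced by that computation. It is only a weakening that reproduces the paper's form of $\Gamma_3$, and it is valid only when $f(0)\ge 0$; otherwise normalize $f(0)=0$.
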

{\bf Proof.} The proof follows the same argument as in Lemma \ref{hc-bb-u} and we shall give a brief proof. Recalling the definitions of $u^{\nu,\pm}$ and $x^{\pm}$ from that proof, we have
\[
x^{-}(\tau_2)-x^{-}(\tau_1)~\leq~x(\tau_2)-x(\tau_1)~\leq~x^{+}(\tau_2)-x^{+}(\tau_1).
\]
Assume that  $x^{-}(\tau_2)-x^{-}(\tau_1)\geq 0$. Then we have 
\[
|x(\tau_2)-x(\tau_1)|~\leq~x^{+}(\tau_2)-x^{+}(\tau_1).
\]
In this case, we consider the integral of  $u^{\nu,+}$ over the domain $\{(s,z):s\in [\tau_1,\tau_2],z\geq x^+(s)\}$. Applying the divergence theorem to  $\big(u,f(u)\big)$, we obtain that 
\bel{td}
\begin{split}
0&~\leq~\int^{\infty}_{x^+(\tau_2)}u^{\nu,+}(\tau_2,z)~dz~\leq~\int^{\infty}_{x^+(\tau_1)}u^{\nu,+}(\tau_2,z)dz+\sum_{\tau_1<t_\ell\leq \tau_2}\int^{\infty}_{x^+(t_{\ell})}\Big(u^{\nu,+}(t_\ell,z)-u^{\nu,+}(t_\ell-,z)\Big)dz\\
&\qquad\qquad\qquad \qquad\qquad\qquad\qquad\qquad+\int_{\tau_1}^{\tau_2}\bigg(f\Big(u^{\nu,+}\big(s,x^+(s)\big)\Big)-u^{\nu,+}\big(s,x^+(s)\big)\cdot \dot{x}^+(s)\bigg)ds\\
&~\leq~ (1+\tau_2) e^{L_K\tau_2} \cdot \|\bar{u}\|_{{\bf L}^1(\R)} + \int_{\tau_1}^{\tau_2}\bigg(f\Big(u^{\nu,+}\big(s,x^+(s))\big)-u^{\nu,+}\big(s,x^+(s)\big)\cdot f'\Big(u^{\nu,+}\big(s,x^+(s)\big)\Big)\bigg)ds,
\end{split}
\eeq
For every $\omega\geq 0$, it holds 
\[
\begin{split}
\omega f'(\omega)-f(\omega)&~=~-f(0)+\omega f'(\omega)-\int_{0}^{1}f'(s\omega)\omega~ds~\geq~-f(0)+C_f\omega^{1+p_1}\cdot \int_{0}^{1}(1-s)^{p_1}ds\\
&~=~-f(0)+{C_f\over 1+p_1}\cdot \omega^{1+p_1}~\geq~
-f(0)+{C_f\over (1+p_1)}\cdot \left[{{|f'(\omega)|^{1/\tilde{\gamma}_p}}\over 2^{p_1}C_f^{1/\tilde{\gamma}_p}}-1\right].
\end{split}
\]
Hence, from (\ref{td}) we derive that 
\[
\int_{\tau_1}^{\tau_2}\bigg({|\dot{x}^{+}(s)|^{1/\tilde{\gamma}_p}\over 2^{p_1}C_f^{1/\tilde{\gamma}_p}}-1~\bigg)ds~\leq~{1+p_1\over C_f}\cdot \left(f(0)+2 e^{L_K} \cdot \|\bar{u}\|_{{\bf L}^1(\R)} \right),
\]
which yields (\ref{Holder-11}).  

The case $x^{+}(\tau_2)-x^{+}(\tau_1)\leq 0$ can be treated analogously, completing the proof.
\endproof
\medskip

%
Toward establishing the well-posedness and BV regularity of the weak entropy solution of (\ref{BL}), we shall derive an Oleinik-type inequality for the sequence of approximating solutions $(u^{\nu})_{\nu\geq 1}$  defined in  (\ref{u-nu}).
%
%
\begin{lemma} There exists a constant $\Gamma_4>0$ such that for every $\nu\in \mathbb{Z}^+$ and $\tau=\ell_02^{-\nu}$ for $\ell_0\in \{1,\cdots, 2^{\nu}\}$, it holds 
\bel{O-le1}
f'\big(u^{\nu}(\tau-,x_2)\big)-f'\big(u^{\nu}(\tau-,x_1)\big)~\leq~\max\left\{{4(x_2-x_1)\over \tau},  2\, \sqrt{M_\tau(x_2-x_1)}\right\},\qquad x_1<x_2,
\eeq
with 
\bel{M-r}
M_\tau~=C_{\tau/2}\|f''\|_{{\bf L}^{\infty}(-C_{{r/ 2}},C_{r/ 2})},\qquad C_s~=~\big(1+ \|K\|_{{\bf L}^1(\R)}\big)\cdot \left(\Gamma_4+{\Gamma_4\over s^{p_1+1}}\right).
\eeq
\end{lemma}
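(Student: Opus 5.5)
We need an Oleinik-type estimate for $u^\nu(\tau-,\cdot)$. The estimate (\ref{O}) holds for $S_t$ alone, but each time step adds $2^{-\nu}{\bf G}[u^\nu(t_\ell-,\cdot)]$, which can destroy the one-sided bound. The plan is to control the spatial Lipschitz constant of ${\bf G}[u^\nu(t,\cdot)]$ on $[\tau/2,\tau]$ and track how it perturbs the decay of $\partial_x f'(u)$ along characteristics, i.e.\ a Riccati inequality
\[
\frac{d}{dt}\,w \;\le\; -\,w^2 + f''(u)\,\big|\partial_x {\bf G}[u]\big|,\qquad w=\partial_x f'(u),
\]
at least formally. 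From it we would extract $w\le \max\{4/\tau,\, 2\sqrt{M_\tau}\}$-type bounds in difference-quotient form.

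\textbf{Step 1: uniform bounds on $[\tau/2,\tau]$.} From (\ref{u-L1}) and Remark \ref{L1-1}, mimicking Lemma \ref{L-infty-b} with $\mathbf L^1$ in place of $\mathbf L^2$ and using Lemma \ref{bb-c}, we get
\[
\|u^\nu(t\pm,\cdot)\|_{\mathbf L^\infty}\le \Gamma_4\bigl(1+t^{-1/(p_1+1)}\bigr).
\]
Since $\|{\bf G}[g]\|_{\infty}\le \|K\|_{\mathbf L^1}\|g\|_{\infty}$, the values after each source step stay within $C_{t}$, up to the constants recorded in (\ref{M-r}).

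\textbf{Step 2: Lipschitz bound on the source.} We need $|{\bf G}[u](y)-{\bf G}[u](x)|\le C_{\tau/2}\,(\ldots)|y-x|$. Here (\ref{K1-1}) with $i=1$ is used together with a splitting of $K$ into near and far parts: the far part is Lipschitz via $|K'|\le C_K/|x|^2$, and the near part uses the $BV^{1/p_1}$ bound of Lemma \ref{F-BV}. I expect this to give a bound of the form $M_\tau\,(y-x)$ after absorbing $f''$ on $[-C_{\tau/2},C_{\tau/2}]$. If a pure Lipschitz bound fails (which seems likely, since $u$ has shocks), I would instead compare along two characteristics. Let $\xi_1<\xi_2$ be the backward characteristics from $(\tau,x_1),(\tau,x_2)$, and set
\[
D(t)=f'(u(t,\xi_2))-f'(u(t,\xi_2))\big|_{\xi_1}\equiv f'(u(t,\xi_2))-f'(u(t,\xi_1)).
\]
In each step $D$ jumps by at most $2^{-\nu}\|f''\|\,|{\bf G}[u](\xi_2)-{\bf G}[u](\xi_1)|$, while between steps the gap $\xi_2-\xi_1$ grows at rate $D$.

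\textbf{Step 3: comparison ODE.} With $d=\xi_2-\xi_1$ we get $\dot d\ge D$ (genuine characteristics) and $\dot D\le M_\tau$ in the averaged sense. Starting from $t=\tau/2$, where $D$ is merely bounded, a phase-plane comparison for $(d,D)$ shows the following. If $D(\tau)>\max\{4d(\tau)/\tau,\,2\sqrt{M_\tau d(\tau)}\}$, then backwards in time $D$ stays large and $d$ shrinks to zero before $t=\tau/2$, contradicting non-crossing of characteristics. The two regimes of the maximum correspond to transport dominance, where $d$ decreases linearly over time $\tau/2$, and forcing dominance, where $D^2\sim M_\tau d$ from $\ddot d\le M_\tau$.

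The main obstacle is Step 2/3: making ``$\dot D\le M_\tau$'' rigorous when only discrete jumps and merely $\mathbf L^1$-Lipschitz sources are available. I would first try bounding $|{\bf G}[u](\xi_2)-{\bf G}[u](\xi_1)|\le 2\|K\|_{\mathbf L^1}\|u\|_\infty$ directly, which gives a quadratic-in-time forcing bound of the same form. This is coarser but enough to produce the square-root term in (\ref{O-le1}).
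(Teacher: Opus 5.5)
Your last-paragraph fallback is essentially the paper's proof. On $[\tau/2,\tau]$ each source step changes $f'(u^\nu)$ along a minimal backward characteristic by at most $2^{-\nu}\|f''\|\,\|K\|_{{\bf L}^1}\|u^\nu\|_{{\bf L}^\infty}$, so integrating gives $0\le \xi_2(t)-\xi_1(t)\le (x_2-x_1)-(\tau-t)D(\tau)+M_\tau(\tau-t)^2$; choosing $t=\tau/2$ or $\tau-t=\sqrt{(x_2-x_1)/M_\tau}$ then gives the claimed inequality, and the Lipschitz/Riccati route of Step 2 is not needed at all.

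One correction to Step 2: for non-quadratic $f$ the jump of $D$ at a time step is bounded by $2^{-\nu}\|f''\|\big(|{\bf G}[u](\xi_1)|+|{\bf G}[u](\xi_2)|\big)$, not by the difference $|{\bf G}[u](\xi_2)-{\bf G}[u](\xi_1)|$. This is a further reason why only the sup-norm bound on the source is usable here.
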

{\bf Proof.}  {\bf 1.} From Remark \ref{L1-1} and Lemma \ref{bb-c}, and using the same argument as in the proof of Lemma \ref{L-infty-b}, one can show that there exists a constant $\Gamma_4>0$ such that for every $\nu\in \mathbb{Z}^+$ and $s=\ell 2^{-\nu}$ for $\ell\in \{1,\cdots, 2^{\nu}\}$, it holds 
\bel{L-infty-1}
\left\|u^{\nu}(s-,\cdot)\right\|_{{\bf L}^{\infty}(\R)}~\leq~ \Gamma_4\cdot \left(1+{1\over s^{p_1+1}}\right).
\eeq
Hence, by (\ref{K1-1}), we get
\bel{L-infty-2}
\begin{split}
\left\|u^{\nu}(s+,\cdot)\right\|_{{\bf L}^{\infty}(\R)}&~\leq~\left\|u^{\nu}(s-,\cdot)\right\|_{{\bf L}^{\infty}(\R)}+2^{-\nu}\cdot  \left\|{\bf G}[u^{\nu}(s-,\cdot)]\right\|_{{\bf L}^\infty(\R)}\\[2mm]
&~\leq~\left(1+2^{-\nu}\cdot \|K\|_{{\bf L}^1(\R)}\right)\cdot \left\|u^{\nu}(s-,\cdot)\right\|_{{\bf L}^{\infty}(\R)}\\[2mm]
&~\leq~\Gamma_4\big(1+ \|K\|_{{\bf L}^1(\R)}\big)\cdot \left(1+{1\over s^{p_1+1}}\right)~=~C_s.
\end{split}
\eeq
{\bf 2.} Fix $\tau=\ell_02^{-\nu}$ for some $\ell_0\in \{1,\cdots, 2^{\nu}\}$. For every $x_1<x_2$ and $\nu\geq 1$, let $x_i^{\nu}$ be the minimal backward characteristic for the approximate solution $u^{\nu}$, through the point $(\tau,x_i)$ for $i=1,2$. For every $t \in [\tau/2,\tau]$, setting $k_t=\left\lfloor 2^{\nu} t\right\rfloor$ the integer part of $ 2^{\nu} t$, we get 
\[
\begin{split}
x_i&~=~x^{\nu}_i(t)+(t_{k_t+1}-t) f'\Big(u^{\nu}\big(t_{k_t+1}-,x^{\nu}_i(t_{k_t+1})\big)\Big)+\sum_{k_t+1 \leq \ell \leq \ell_0} 2^{-\nu}f'\Big(u^{\nu}\big(t_\ell+,x^{\nu}_i(t_\ell)\big)\Big),
\end{split}
\]
which implies
\[
\begin{split}
x_2-x_1&~=~x_2^{\nu}(t)-x_1^{\nu}(t)+(t_{k_t+1}-t) \Big[f'\Big(u^{\nu}\big(t_{k_t+1}-,x^{\nu}_2(t_{k_t+1})\big)\Big)-f'\Big(u^{\nu}\big(t_{k_t+1}-,x^{\nu}_1(t_{k_t+1})\big)\Big)\Big]\\[2mm]
&\qquad\qquad\qquad\quad~~+\sum_{k_t+1 \leq \ell \leq \ell_0}2^{-\nu}\Big[f'\Big(u^{\nu}\big(t_\ell+,x^{\nu}_2(t_\ell)\big)\Big)-f'\Big(u^{\nu}\big(t_\ell+,x^{\nu}_1(t_\ell)\big)\Big)\Big].
\end{split}
\]
By (\ref{L-infty-1}), (\ref{L-infty-2}) and (\ref{M-r}), for all $ k_t+1\leq\ell\leq\ell_0$ we get 
\[
\begin{split}
\Big|f'\big(u^{\nu}(\tau,x_i)\big)&-f'\Big(u^{\nu}\big(t_\ell+,x^{\nu}_i(t_\ell)\big)\Big)\Big|~=~\left|\sum_{j=\ell+1}^{\ell_0}\Big[f'\Big(u^{\nu}\big(t_j+,x^{\nu}_i(t_j)\big)\Big)-f'\Big(u^{\nu}\big(t_j-,x^{\nu}_i(t_j)\big)\Big)\Big]\right|\\
&\leq~\|f''\|_{{\bf L}^{\infty}(-C_{{r/ 2}},C_{r/ 2})}\cdot \sum_{j=\ell+1}^{\ell_0} \big|u^{\nu}\big(t_j+,x^{\nu}_i(t_j)\big)-u^{\nu}\big(t_j-,x^{\nu}_i(t_j)\big)\big|\\
&\leq~\|f''\|_{{\bf L}^{\infty}(-C_{{r/ 2}},C_{r/ 2})}\cdot \sum_{j=\ell+1}^{\ell_0}2^{-\nu} \left\|u^{\nu}(t_j-,\cdot)\right\|_{{\bf L}^{\infty}(\R)}\\
&\leq~M_{\tau}\cdot  \sum_{j=\ell+1}^{\ell_0}2^{-\nu}~\leq~M_{\tau}\cdot(\tau-t_{\ell}).
\end{split}
\]
Similarly,
\[
\Big|f'\big(u^{\nu}(t,x_i(t))\big)-f'\Big(u^{\nu}\big(t_\ell+,x^{\nu}_i(t_\ell)\big)\Big)\Big|~\leq~ M_\tau(t_\ell-t).
\]
Hence, for all $t\in [\tau/2,\tau]$, it holds
\bel{ek1}
\begin{cases}
x_2-x_1~\geq~x_2^{\nu}(t)-x_1^{\nu}(t)+(\tau-t)\Big[f'\big(u^{\nu}(\tau,x_2)\big)-f'\big(u^{\nu}(\tau,x_1)\big)\Big]-M_\tau\cdot (\tau-t)^2,\\[2mm]
x_2-x_1~\leq~x_2^{\nu}(t)-x_1^{\nu}(t)+(\tau-t)\Big[f'\big(u^{\nu}(\tau,x_2)\big)-f'\big(u^{\nu}(\tau,x_1)\big)\Big]+M_\tau\cdot (\tau-t)^2,\\[2mm]
x_2-x_1~\leq~x_2^{\nu}(t)-x_1^{\nu}(t)+(\tau-t)\Big[f'\big(u^{\nu}(t,x_2)\big)-f'\big(u^{\nu}(t,x_1)\big)\Big]+M_\tau\cdot (\tau-t)^2.
\end{cases}
\eeq
In particular, we have 
\[
f'\big(u^{\nu}(\tau,x_2)\big)-f'\big(u^{\nu}(\tau,x_1)\big)~\leq~{x_2-x_1\over \tau-t}+ M_\tau\cdot (\tau-t),\qquad t\in [\tau/2,\tau).
\]
If $x_2-x_1\geq \ds {M_\tau \tau^2\over 4}$, then by choosing $t=\tau/2$, we get
\[
f'\big(u^{\nu}(\tau,x_2)\big)-f'\big(u^{\nu}(\tau,x_1)\big)~\leq ~ {4(x_2-x_1)\over \tau} .
\]
Otherwise, if $x_2-x_1< \ds {M_\tau \tau^2\over 4}$, then by choosing $t\in [\tau/2,\tau]$ such that $\tau-t= \ds\sqrt{{x_2-x_1\over M_\tau}}$, we get 
\[
f'\big(u^{\nu}(\tau,x_2)\big)-f'\big(u^{\nu}(\tau,x_1)\big)~\leq~\ds 2\, \sqrt{M_{\tau}(x_2-x_1)},
\]
which yields 
\[
f'\big(u^{\nu}(\tau,x_2)\big)-f'\big(u^{\nu}(\tau,x_1)\big)~\leq~\max\left\{{4(x_2-x_1)\over \tau}, 2\, \sqrt{M_{\tau}(x_2-x_1)}\right\}.
\]
Taking $\nu\to\infty$, we obtain (\ref{O-le1}).
\endproof

\begin{theorem}\label{Un-L1} Assume that (\ref{K1-1})-(\ref{t-gamm}) hold. For every $\bar{u}\in {\bf L}^1(\R)$, the Cauchy problem (\ref{BL})-(\ref{G}) with initial condition (\ref{ini-da}) admits a unique entropy weak solution $u(\tau,x)$ defined for all $(\tau,x)\in ]0,\infty[\times\R$. Moreover, for every $\tau\in [0,T]$, there exists a constant $C_T>0$ such that
\bel{dcd1}
\|u(\tau,\cdot)\|_{{\bf L}^1(\R)}~\leq~ e^{L_K \tau} \cdot \|\bar{u}\|_{{\bf L}^1(\R)},\qquad \|u(\tau,\cdot)\|_{{\bf L}^\infty(\R)}~\leq~C_T\cdot \left(1+{1\over \tau^{1/(p_1+1)}}\right),
\eeq
and for every $x_1<x_2$
\bel{dcd2}
f'\big(u(\tau,x_2)\big)-f'\big(u(\tau,x_1)\big)~\leq~\max\left\{{4(x_2-x_1)\over \tau},  2\, \sqrt{C_T(x_2-x_1)}\right\}.
\eeq
In addition,  $u(\tau,\cdot)$  belongs to $BV_{loc}^{{1\over 2p_1}}(\R)$  for all $\tau>0$.
%
\end{theorem}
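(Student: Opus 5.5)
Existence will follow the scheme of Theorem \ref{Main1}, with ${\bf L}^1$ replacing ${\bf L}^2$ throughout. First take $\bar u\in{\bf L}^1(\R)\cap{\bf L}^2(\R)$ and use the flux-splitting approximations $u^\nu$ from (\ref{u-nu}).

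\textbf{A priori bounds.} The bound (\ref{u-L1}) is uniform in $\nu$. The ${\bf L}^\infty$ bound (\ref{L-infty-1}) comes from Remark \ref{L1-1}, Lemma \ref{bb-c} and the iteration in Lemma \ref{L-infty-b}. The Oleinik estimate (\ref{O-le1}) holds at dyadic times, and I extend it to all $\tau$ by composing with $S_{t-t_\ell}$, which only improves one-sided bounds up to the same constants. The exponent in (\ref{dcd1}) should be $1/(p_1+1)$; I would track it through Remark \ref{L1-1}.

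\textbf{Regularity and compactness.} From (\ref{O-le1}) I derive a fractional BV bound in the spirit of Lemma \ref{F-BV}. Partition $[a,b]$, set $\beta_i=u(\tau,x_i)-u(\tau,x_{i-1})$, and first treat increasing jumps. By (\ref{Phi}), $C_f\beta_i^{p_1}\le\Phi_f(\beta_i)\le \max\{4\Delta x_i/\tau,\,2\sqrt{M_\tau\Delta x_i}\}$. This gives $\beta_i^{2p_1}\lesssim \Delta x_i$ on short intervals and $\beta_i^{p_1}\lesssim\Delta x_i$ on long ones. Summing, $\sum_{I^+}|\beta_i|^{2p_1}$ is bounded by $C(b-a)$ plus a constant coming from the ${\bf L}^\infty$ bound.

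For decreasing jumps the sum of $|\beta_i|$ is controlled by the sum of increasing jumps plus $2\|u\|_\infty$. Each $|\beta_i|$ is also bounded by $2\|u\|_\infty$. Together these give $\sum_{I^-}|\beta_i|^{2p_1}\le (2\|u\|_\infty)^{2p_1-1}\sum_{I^-}|\beta_i|<\infty$. Hence $u(\tau,\cdot)\in BV^{1/(2p_1)}_{loc}$ uniformly in $\nu$ for $\tau\ge\delta$.

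With this bound and the Hölder characteristics of Lemma \ref{bb-c}, the compactness steps of Section 2 go through, in particular (\ref{dist-1})--(\ref{Holder-1}), now with $\|{\bf G}[u]\|_{{\bf L}^1}\le L_K\|u\|_{{\bf L}^1}$. Tightness in ${\bf L}^1$ follows by the argument of Lemma \ref{TN1}, using $\int_{|x|>R}|K|\to0$. Passing to the limit gives an entropy solution satisfying (\ref{dcd1}) and (\ref{dcd2}). General $\bar u\in{\bf L}^1$ is then handled by density together with the stability estimate below.

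\textbf{Uniqueness and stability.} Here ${\bf G}$ is bounded on ${\bf L}^1$, so Kružkov doubling of variables applies directly. Let $u,v$ be two entropy solutions in the sense of Definition \ref{Def2}. The source contributes a term $\le\iint|{\bf G}[u]-{\bf G}[v]|\phi$. Letting $\phi$ approximate $\chi_{[s,t]}(\tau)$ times a cutoff in $x$ gives
\[
\|u(t)-v(t)\|_{{\bf L}^1}\le\|u(s)-v(s)\|_{{\bf L}^1}+L_K\int_s^t\|u-v\|_{{\bf L}^1}\,d\tau .
\]
The ${\bf L}^\infty_{loc}(]0,\infty[,{\bf L}^\infty)$ assumption makes the flux locally Lipschitz, so the spatial cutoff can be removed using finite propagation speed and ${\bf L}^1$ integrability. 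Letting $s\to0$ with continuity in ${\bf L}^1$ at $t=0$, Gronwall yields $\|u(t)-v(t)\|_{{\bf L}^1}\le e^{L_Kt}\|\bar u-\bar v\|_{{\bf L}^1}$.

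\textbf{Main obstacle.} The delicate step is doubling of variables near $t=0$, where the solutions are not uniformly bounded. Doubling only needs local boundedness on $[s,t]$ with $s>0$, plus the assumed ${\bf L}^1$-continuity as $s\to0$; I would handle it that way. This stability estimate is also what extends existence from ${\bf L}^1\cap{\bf L}^2$ data to all of ${\bf L}^1$.
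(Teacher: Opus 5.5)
Your existence argument (splitting scheme for $\bar u\in{\bf L}^1\cap{\bf L}^2$, then density plus stability), the a priori bounds, and the Kruzhkov-plus-Gronwall uniqueness argument follow essentially the paper's route. The genuine gap is in the fractional BV part, specifically in your treatment of decreasing jumps.

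The identity $\sum_{I^-}|\beta_i|=\sum_{I^+}\beta_i-\big(u(\tau,x_N)-u(\tau,x_0)\big)$ is correct. However, it helps only if $\sum_{I^+}\beta_i$ (first powers) is bounded uniformly over partitions. That is exactly the statement $u(\tau,\cdot)\in BV_{loc}(\R)$, which is far more than the theorem claims and is not something you have established.

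Your increasing-jump estimate controls $\sum_{I^+}\beta_i^{2p_1}$ with $2p_1\ge 2$. That does not bound $\sum_{I^+}\beta_i$. Moreover, (\ref{O-le1}) on short intervals gives only $\beta_i\lesssim (M_\tau\Delta x_i)^{1/(2p_1)}$, and the sum of these terms diverges as the partition is refined. A one-sided square-root modulus is compatible with unbounded variation, which is precisely why only a fractional space is claimed. Existence is unaffected: the compactness in Section 2 rests on Lemma \ref{F-BV} applied to $S_\delta$, not on this bound.

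The missing idea is to control downward jumps through the spreading of backward characteristics, as in Step 2 of Lemma \ref{F-BV}, not through the upward variation. This is the paper's route:
\begin{itemize}
\item By (\ref{ek1}), if $\beta_i<0$ then $x^\nu_i(t)-x^\nu_{i-1}(t)\ge \Phi_f(|\beta_i|)(\tau-t)-M_\tau(\tau-t)^2$ for $t\in[\tau/2,\tau]$.
\item At a fixed time these gaps are disjoint. By Lemma \ref{bb-c} their total is at most $x^\nu_N(t)-x^\nu_0(t)\le b-a+2\Gamma_3$.
\item Taking $t=\tau/2$ gives $\sum\Phi_f(|\beta_i|)\le 4(b-a+2\Gamma_3)/\tau$ over the jumps with $\Phi_f(|\beta_i|)\ge M_\tau\tau$.
\end{itemize}

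Be careful with the remaining small downward jumps. They cannot be bounded in terms of $\Delta x_i$: a weak shock straddled by two nearby continuity points keeps $|\beta_i|$ fixed while $\Delta x_i\to 0$. So the paper's per-jump bound $\Phi_f^2(|\beta_i|)\le 4e^2M_\tau(x_i-x_{i-1})$ in that sub-case should not be relied on. Its Gronwall step uses $2/(\tau-t)\le 4/\tau$ on $[t_i,\tau]$, whereas there $\tau-t<\tau/2$. This sub-case needs its own argument, again based on the separation of characteristics at earlier times rather than on $\Delta x_i$.
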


{\bf Proof.} 
{\bf 1.} For every $\bar{u}\in {\bf L}^1(\R)\cap {\bf L}^2(\R)$, let $(u^{\nu})_{\nu\geq 1}$ be the sequence of approximating solutions  defined in   (\ref{u-nu}). By the proof of Theorem \ref{Main1}, there exists a countably infinite set of indices $\mathcal{I}\subseteq \mathbb{N}$ such that $\big(u^{\nu}\big)_{\nu\in\mathcal{I}}$ converges to  a weak entropy solution $u$ of (\ref{BL}) in the sense of Definition  \ref{Def1}, with initial condition $u(0,\cdot)=\bar{u}$. Moreover,  both (\ref{l1}) and (\ref{l2}) hold. Consequently, by (\ref{u-L1}) and (\ref{L-infty-1}), for every $T>0$ there exists a constant $C_T>0$ such that  both (\ref{dcd1}) and (\ref{dcd2}) are satisfied for all $\tau\in (0,T]$. Hence, since the map $\tau\mapsto u(\tau,\cdot)$ is  continuous from $(0,\infty)$ to  $\L^2(\R)$, it is also continuous from  $(0,\infty)$ to    $\L^1(\R)$. On the other hand, the continuity also holds at $\tau=0$, that means
\bel{L1-0}
\lim_{\tau\to 0+} \|u(\tau,\cdot)-\bar{u}\|_{{\bf L}^1(\R)}~=~0.
\eeq
Indeed, for any $\tau>0$, $\nu\in \mathbb{N}$ and $\ell\in \{1,2,\cdots, \lfloor 2^{\nu}\tau\rfloor\}$, we have 
\[
\begin{split}
\big\|S_{\tau-t_\ell}\big(u^{\nu}(t_\ell,\cdot)\big)&-S_{\tau-t_{\ell-1}}\big(u^{\nu}(t_{\ell-1},\cdot)\big)\big\|_{{\bf L}^1(\R)}~=~\big\|S_{\tau-t_\ell}\big(u^{\nu}(t_\ell,\cdot)\big)-S_{\tau-t_{\ell}}\big(u^{\nu}(t_{\ell}-,\cdot)\big)\big\|_{{\bf L}^1(\R)}\\[1mm]
&\leq~\big\|u^{\nu}(t_\ell,\cdot)-u^{\nu}(t_\ell-,\cdot)\big\|_{{\bf L}^1(\R)}~\leq~2^{-\nu}\|K\|_{{\bf L}^1(\R)}\cdot \|u^{\nu}(t_\ell-,\cdot)\|_{{\bf L}^1(\R)}\\[1mm]
&\leq~2^{-\nu}L_Ke^{L_K\tau}\cdot \|\bar{u}\|_{{\bf L}^1(\R)},
\end{split}
\]
which yields
\[
\begin{split}
\big\|u^{\nu}(\tau,\cdot)-S_\tau(\bar{u})\big\|_{{\bf L}^1(\R)}&~\leq~\sum_{i=1}^{\lfloor 2^{\nu}\tau\rfloor}\big\|S_{\tau-t_\ell}\big(u^{\nu}(t_\ell,\cdot)\big)-S_{\tau-t_{\ell-1}}\big(u^{\nu}(t_{\ell-1},\cdot)\big)\big\|_{{\bf L}^1(\R)}\\
&~\leq~L_Ke^{L_K\tau} \|\bar{u}\|_{{\bf L}^1(\R)}\cdot\tau.
\end{split}
\]
Taking $\mathcal{I}\ni\nu \to \infty$, we get
\[
\big\|u(t,\cdot)-S_t(\bar{u})\big\|_{{\bf L}^1(\R)}~\leq~L_Ke^{L_K\tau} \|\bar{u}\|_{{\bf L}^1(\R)}\cdot\tau.
\]
Hence, (\ref{L1-0})  follows from the continuity of the semigroup $S_\tau$ at time $\tau=0$ and $u$ is also an entropy weak solution of  the Cauchy problem (\ref{BL}), (\ref{ini-da}) in the sense of Definition \ref{Def2}.
\medskip

{\bf 2.}  Next, let $u_1$ and $u_2$ be entropy weak solutions of (\ref{BL}) with $u_1(0,\cdot)=\bar{u}_1\in {\bf L}^1(\R)$ and $u_2(0,\cdot)=\bar{u}_2\in {\bf L}^1(\R)$ in the sense of Definition \ref{Def2}.  We  prove that
\bel{Sta}
\|u_2(\tau,\cdot)-u_1(\tau,\cdot)\|_{{\bf L}^1(\R)}~\leq~e^{L_K\tau}\cdot \|\bar{u}_2-\bar{u}_1\|_{{\bf L}^1(\R)}.
\eeq
Since the map $\tau\mapsto u_i(\tau,\cdot)$ is continuous with values in ${\bf L}^1(\R)$, for every $\ve>0$ there exists $\tau_\ve>0$ such that 
\[
\|u_2(\tau_{\ve},\cdot)-u_1(\tau_{\ve},\cdot)\|_{{\bf L}^1(\R)}~\leq~\ve+\|\bar{u}_2-\bar{u}_1\|_{{\bf L}^1(\R)}.
\]
Notice that by Definition \ref{Def2},  it holds
\[
\sup_{\tau\in [\tau_{\ve},T]} \|u_i(\tau,\cdot)\|_{{\bf L}^2(\R)}~<~\infty\qquad\forall T>\tau_{\ve}.
\]
Set $Z(\tau)\doteq \|u_2(\tau,\cdot)-u_1(\tau,\cdot)\|_{{\bf L}^1(\R)}$. With the same argument as in the proof of \cite[Theorem 6.2]{Bbook}, for every $\tau_\ve\leq s<\tau$, we have
\[
Z(\tau)~\leq~Z(s)+\int_{s}^{\tau}\|{\bf G}[u_2(t,\cdot)-u_1(t,\cdot)]\|_{{\bf L}^1(\R)}~dt~\leq~Z(s)+L_K\cdot\int_{s}^{\tau} Z(t)~dt.
\]
Applying Gronwall's inequality, we get
\[
Z(\tau)~\leq~e^{L_K(\tau-\tau_\ve)} Z(\tau_\ve)~\leq~e^{L_K(\tau-\tau_\ve)} \cdot \big(\ve+\|\bar{u}_2-\bar{u}_1\|_{{\bf L}^1(\R)}\big) \qquad\forall \tau\geq \tau_{\ve},
\]
which yields (\ref{Sta}). Combining this with the existence of solutions for initial data in ${\bf L}^1(\R)\cap {\bf L}^2(\R)$, we conclude that for every $\bar{u}\in {\bf L}^1(\R)$,  the Cauchy problem (\ref{BL}), (\ref{ini-da}) admits a unique entropy weak solution $u$ in the sense of Definition \ref{Def2}.

\medskip

{\bf 3.} It remains to show that $u(\tau,\cdot)$  belongs to $BV_{loc}^{1/(2 p
_1)}(\R)$ for every $\tau>0$. Fix $\nu\in \mathcal{I}$ and $\tau=\ell_02^{-\nu}$ for $\ell_0\in \{1,\cdots, 2^{\nu}\}$. Given any  $a<b$, let $\{x_0,x_1,\cdots, x_N\}$ be a partition of $[a,b]$ such that $u$ is continuous at $x_i$ for all $i\in\{1,\cdots, N\}$. For every $i\in\{1,\cdots, N\}$, let $x_i^{\nu}$ be the backward characteristic through $(\tau,x_i)$. Set $\beta_i=u^{\nu}(\tau,x_{i})-u^{\nu}(\tau,x_{i-1})$. We shall denote by
\[
\begin{cases}
I^+~=~\{i\in \{1,\dots, N\}: \beta_i\geq 0\},\qquad I^-~=~\{i\in \{1,\dots N\}: \beta_i<0 \},\\[2mm]
I^+_\tau~=~\{i\in I^+:x_i-x_{i-1}\geq M_\tau \tau^2/4\},\qquad I^-_\tau~=~\{i\in I^-:\Phi_f(|\beta_i|)\geq M_\tau  \tau\},
\end{cases}
\]
with $\Phi_f(\cdot)$ defined in (\ref{Phi}) as
\bel{PPPP}
\Phi_f(s)~\doteq~\inf_{a \in\R} \big\{f'(a+s)-f'(a)\big\}~\geq~C_f s^{p_1}~\quad\forall s>0.
\eeq
From (\ref{O}), (\ref{Phi}), and (\ref{O-le1}), the following hold:
\begin{itemize}
\item If $i\in I^+_\tau$, then 
\[
\Phi^2_f(\beta_i)~\leq~\big[f'\big(u^{\nu}(\tau,x_i)\big)-f'\big(u^{\nu}(\tau,x_{i-1})\big)\big]^2~\leq~{16 (x_i-x_{i-1})^2\over \tau^2}.
\]
\item Otherwise, if $i\in I^{+}\backslash I^+_\tau$, then 
\[
\Phi^2_f(\beta_i)~\leq~\big[f'\big(u^{\nu}(\tau,x_i)\big)-f'\big(u^{\nu}(\tau,x_{i-1})\big)\big]^2~\leq~4M_\tau (x_i-x_{i-1}).
\]
\end{itemize}
Hence, we estimate 
\bel{eet1}
\begin{split}
\sum_{i\in I^+}\Phi^2_f(\beta_i)&~=~\sum_{i\in I^+_\tau}\Phi^2_f(\beta_i)+\sum_{i\in I^+\backslash I^+_\tau}\Phi^2_f(\beta_i)\\
&~\leq~\sum_{i\in I^{+}_\tau}{16 (x_i-x_{i-1})^2\over \tau^2}+\sum_{i\in I^+\backslash I^{+}_\tau} 4 M_{\tau} (x_i-x_{i-1})\\
&~\leq~ {16 (b-a)^2\over \tau^2}+4M_\tau(b-a).
\end{split}
\eeq

Next, from the second inequality in (\ref{ek1}), for all $i\in I^{-}, t\in [\tau/2,\tau]$ it holds
\bel{e1k1}
\Phi_f(|\beta_i|)~\leq~-\big[f'\big(u^{\nu}(\tau,x_{i})\big)-f'\big(u^{\nu}(\tau,x_{i-1})\big)\big]~\leq~{x^{\nu}_i(t)-x^{\nu}_{i-1}(t)\over \tau-t}+ M_\tau(\tau-t).
\eeq
Two cases are considered:
\begin{itemize}
\item If  $i\in I_\tau^{-}$, then  $\Phi_f(|\beta_i|)\geq M_\tau \tau$  and by  choosing $t=\tau/2$ in (\ref{e1k1}), we derive
\[
x^{\nu}_i(\tau/2)-x^{\nu}_{i-1}(\tau/2)~\geq~{\Phi_f(|\beta_i|)\tau\over 2}- {M_\tau \tau^2\over 4 }~\geq~{\Phi_f(|\beta_i|)\tau\over 4}.
\]
\item Otherwise, if $i \in I^- \setminus I_\tau^{-}$, then $\Phi_f(|\beta_i|) < M_\tau \tau$. Hence, there exists $t_i \in [\tau/2,\tau]$ such that
\[
\tau - t_i = \frac{\Phi_f(|\beta_i|)}{2M_\tau} < \frac{\tau}{2}.
\]
By \eqref{e1k1}, for all $t \in [t_i,\tau]$, we  have
\[
x^{\nu}_i(t) - x^{\nu}_{i-1}(t)~\ge~\Phi_f(|\beta_i|)(\tau - t) - M_\tau (\tau - t)^2~=~ \frac{\Phi_f(|\beta_i|)}{2}(\tau - t),
\]
which particularly implies that  
\bel{estq}
x^{\nu}_i(t_i) - x^{\nu}_{i-1}(t_i)~\geq~\frac{\Phi_f(|\beta_i|)}{2}(\tau - t_i)~=~\frac{\Phi^2_f(|\beta_i|)}{4M_{\tau}},
\eeq
and 
\[
x^{\nu}_i(t) - x^{\nu}_{i-1}(t)~\geq~M_{\tau} (\tau-t)^2.
\]
Recalling the third inequality in  (\ref{ek1}), we obtain 
\[
\begin{split}
\dot{x}^{\nu}_i(t) - \dot{x}^{\nu}_{i-1}(t)&~=~f'\big(u^{\nu}(t,x_i)\big)-f'\big(u^{\nu}(t,x_{i-1})\big)~\geq~{-1\over \tau-t}\cdot \left(x_{i}^{\nu}(t)-x_{i-1}^{\nu}(t)+M_\tau (\tau - t)^2\right)\\
&~\geq~ {-2 \left(x_{i}^{\nu}(t)-x_{i-1}^{\nu}(t)\right)\over \tau-t}~\geq~{-4 \left(x_{i}^{\nu}(t)-x_{i-1}^{\nu}(t)\right)\over \tau}\qquad\forall t\in [t_i,\tau],
\end{split}
\]
and  Gronwall's inequality yields 
\[
x^{\nu}_i(t_i) - x^{\nu}_{i-1}(t_i)~\leq~e^{4(\tau-t_i)\over \tau}\cdot \left(x^{\nu}_i(\tau) - x^{\nu}_{i-1}(\tau)\right)~\leq~e^2\cdot (x_{i}-x_{i-1}).
\]
From (\ref{estq}), it holds
\[
\Phi^2_f(|\beta_i|)~\leq~4M_{\tau}\left(x^{\nu}_i(t_i) - x^{\nu}_{i-1}(t_i)\right)~\leq~4e^2M_{\tau}\cdot (x_{i}-x_{i-1}).
\]
\end{itemize}
Thus, by Lemma \ref{bb-c} we estimate 
\bel{eqt1}
\begin{split}
\sum_{i\in I^{-}}\Phi^2_f(|\beta_i|)&~=~\sum_{i\in I^-_\tau}\Phi^2_f(|\beta_i|)+\sum_{i\in I^-\backslash I^-_\tau}\Phi^2_f(|\beta_i|)~\leq~\left(\sum_{i\in I^-_\tau}\Phi_f(|\beta_i|)\right)^2+\sum_{i\in I^-\backslash I^-_\tau}\Phi^2_f(|\beta_i|)\\[1mm]
&~\leq~\left(\sum_{i\in I_\tau^{-}}{4\big(x^{\nu}_i(\tau/2)-x^{\nu}_{i-1}(\tau/2)\big)\over \tau}\right)^2+4e^2M_\tau \sum_{I^-\backslash I_\tau^{-}} (x_i-x_{i-1}) \\[1mm]
&~\leq~{16\over \tau^2}\cdot \big[x^{\nu}_N(\tau/2)-x^{\nu}_{0}(\tau/2)\big]^2+4e^2M_\tau (b-a) \\[1mm]
&~\leq~{16\over \tau^2}\cdot\left[b-a+2\Gamma_3\right]^2+4e^2M_\tau (b-a).
\end{split}
\eeq
Combining with (\ref{eet1}), we get
\[
\sum_{i\in I^+\cup I^-}\Phi^2_f(|\beta_i|)~\leq~{16\over \tau^2}\, \left[(b-a)^2+(b-a+2\Gamma_3)^2\right]+4(e^2+1)M_{\tau}(b-a),
\]
and (\ref{PPPP}) yields  
\[
\sum_{i\in I^+\cup I^-}|\beta_i|^{2p_1}~\leq~{1\over C^2_f}\left({16\over \tau^2}\, \left[(b-a)^2+(b-a+2\Gamma_3)^2\right]+4(e^2+1)M_{\tau}(b-a)\right).
\]
Hence, $u^{\nu}(\tau,\cdot)$ belongs to $BV^{{1\over 2p_1}}_{loc}(\R)$ and 
\[
TV^{{1\over 2p_1}}\{u^{\nu}(\tau,\cdot);[a,b]\}~\leq~{1\over C^2_f}\left({16\over \tau^2}\, \left[(b-a)^2+(b-a+2\Gamma_3)^2\right]+4(e^2+1)M_{\tau}(b-a)\right).
\]
Taking the limit as $\nu\to\infty$, we obtain 
\bel{TV-b}
TV^{{1\over 2p_1}}\{u(\tau,\cdot);[a,b]\}~\leq~{1\over C^2_f}\left({16\over \tau^2}\, \left[(b-a)^2+(b-a+2\Gamma_3)^2\right]+4(e^2+1)M_{\tau}(b-a)\right),
\eeq
for every $\tau\in \{\ell 2^{-\mu}:\ell\in \{1,\cdots, 2^{\mu}\}, \mu\geq 1\}$ and $a<b$. Finally, by the continuity of the map $\tau\mapsto u(\tau,\cdot)$ in ${\bf L}^1(\R)$, this $TV^{1/(2p_1)}$-bound holds for all $\tau\in (0,1]$. Moreover, it extends to any interval $(0,T]$ with $T>0$, provided $u(\tau,\cdot)\in BV^{1/(2p_1)}_{\mathrm{loc}}(\R)$ for all $\tau>0$. This completes the proof.\endproof

\section{Local smoothness and wave breaking}\label{S4}
\setcounter{equation}{0}
In this final section, we study the  wave breaking of  the nonlocal balance law (\ref{BL}) with a singular source term (\ref{G}) for  initial data 
\bel{ini-H}
u(0,\cdot)~=~\bar{u}~\in~ H^2(\R),
\eeq 
under the following assumptions:
\begin{itemize}
\item [({\bf A1})] The  kernel $K$  is in $\mathcal{C}^1(\R\backslash \{0\})$ and satisfies
\bel{KK1}
-K(x)~=~K(-x),\qquad \left|K^{(i)}(x)\right|~\leq~{C_K\over |x|^{i+1}}\qquad \text{for} \, i=0,1.
\eeq
%
\end{itemize}
Under assumption {\bf (A1)}, the singular integral of convolution type ${\bf G}:{\bf L}^{2}(\R)\to {\bf L}^2(\R)$ is a bounded linear operator such that 
\bel{pend}
\langle {\bf G}[g],g\rangle_{{\bf L}^2(\R)}~=~0\qquad\forall g\in {\bf L}^2(\R).
\eeq
In addition, it holds 
\bel{Lq-b}
\|{\bf G}[g]\|_{{\bf L}^6(\R)}~\leq~C_{{\bf G}}\cdot\|g\|_{{\bf L}^6(\R)}\qquad\forall g\in {\bf L}^6(\R), \,\, \text{for some} \,\, C_{{\bf G}}>0.
\eeq

\begin{definition} 
We say that the Cauchy problem (\ref{BL})-(\ref{G}) with initial condition (\ref{ini-H}) exhibits wave breaking (shock formation) at time $T>0$ if it admits a  unique classical solution $u$ which is defined in $[0,T)\times\R$ and satisfies
\[
\liminf_{t\to T^-}\Big(\inf_{x\in \R}u_x(t,x)\Big)~=~-\infty.
\]
\end{definition}
Toward to the wave breaking  phenomenon of  (\ref{BL})-(\ref{G}), we first derive some basic ODEs related to the ${\bf L}^2$-norm of $u$, $u_x$, and $u_{xx}$.  Assume that  $u\in \mathcal{C}\big([0,T),H^2(\R)\big)\cap \mathcal{C}^1\big([0,T),H^1(\R)\big)$ is the solution of (\ref{BL}), (\ref{ini-H}). From (\ref{BL}) and (\ref{pend}), we have
\[
\frac12 {d\over dt}\int_{\R}u^2dx~=~\int_{\R}{\bf G}[u]udx~=~0,
\]
and this yields 
\bel{L-2}
\|u(t,\cdot)\|_{{\bf L}^2(\R)}~=~\|\bar{u}\|_{{\bf L}^2(\R)},\qquad t\in [0,T].
\eeq
 Differentiating (\ref{BL}) with respect to $x$, we  get
\bel{ux}
(u_x)_t+f'(u) u_{xx}+f''(u)u^2_x~=~ {\bf G}[u_x].
\eeq
For $q\geq 1$, multiplying (\ref{ux}) by $u_x|u_x|^{q-1}$, we get
\[
{d\over dt}\big|u_x\big|^{q+1}+ \big(f'(u)|u_x|^{q+1}\big)_x+q f''(u)u_x|u_x|^{q+1}~=~(q+1) {\bf G}[u_x] u_x|u_x|^{q-1},
\]
which implies
\bel{u-x-Lq}
{d\over dt}\int_{\R} \big|u_x\big|^{q+1}dx~=~-q\int_{\R} [f'(u)]_x |u_x|^{q+1}dx+(q+1)\int_{\R}{\bf G}[u_x] u_x|u_x|^{q-1}dx.
\eeq
Similarly, differentiating (\ref{BL}) twice with respect to $x$ and multiplying  by $2\, u_{xx}$, we obtain 
\[
\left(u^2_{xx}\right)_t+5f''(u)u_x u^2_{xx}+[f'(u)u^2_{xx}]_x+2f'''(u)u_x^3u_{xx}~=~2{{\bf G}[u_{xx}]}\, u_{xx},
\]
and this yields 
\bel{u-xx-L2}
{d\over dt}\|u_{xx}(t,\cdot)\|^2_{{\bf L}^2(\R)}~=~-5\int_{\R}[f'(u)]_x u_{xx}^2dx-2\int_{\R}f'''(u)u_x^3u_{xx} dx.
\eeq
The term $[f'(u)]_x$ and the last term in \eqref{u-xx-L2} complicate the analysis of wave breaking for the system \eqref{BL}--\eqref{G}. 

\subsection{Fluxes with growth condition} To obtain a rigorous argument, we begin by assuming that $f'''$ satisfies an appropriate growth condition, which includes, in particular, the case of the Burgers-Hilbert equation. 
\begin{itemize}
\item[({\bf A2})] The flux $f \in \mathcal{C}^3(\mathbb{R})$ is strictly convex and satisfies the following growth condition: there exist constants $\Gamma \ge 0$ and $p\in [0,2/3)$ such that
\begin{equation}\label{Ge-f}
|f'''(u)| \le \Gamma \big(1 + |u|^p\big), \quad \forall\, u \in \mathbb{R}.
\end{equation}
\end{itemize}
For every $p \in [0,2/3)$ and $v \in H^2(\mathbb{R})$, we denote by 
\bel{cons}
\theta_p~=~{2-3p\over 16}~\in~(0,1/8]\qquad\mathrm{and}\qquad \eta(v)~=~\eta^{1/8}_1(v)\cdot \eta^{1/2}_2(v),
\eeq
\bel{cons1}
\eta_1(v)~=~\|v'\|^2_{{\bf L}^2(\R)}\|v''\|^2_{{\bf L}^2(\R)}+2\Gamma\|v'\|^6_{{\bf L}^6(\R)}\, \left(\|v'\|^2_{{\bf L}^2(\R)}+2^{p} \|v\|^p_{{\bf L}^2(\R)} \|v'\|^{2+p}_{{\bf L}^2(\R)}\right),
\eeq
\bel{cons2}
\eta_2(v)~=~2^{1/2}f''(0)+\Gamma \left(4\|v\|^{1/2}_{{\bf L}^2(\R)}\|v'\|^{1/2}_{{\bf L}^2(\R)}+{p+2\over p+1} 2^{1+p/2}\|v\|^{(p+1)/2}_{{\bf L}^2(\R)}\|v'\|^{(p+1)/2}_{{\bf L}^2(\R)}\right). 
\eeq

Our main wave-breaking result for (\ref{BL}) is stated as follows:
\begin{theorem}\label{main4}
Assume that conditions {\bf (A1)} and {\bf (A2)} hold. Then, for every $\theta\in (0,\theta_p)$ and $\bar{u}\in H^2(\R)$ with 
\bel{B-Con}
\Big|\inf_{x\in \R}\big[f'\big(\bar{u}(x)\big)\big]'\Big|~>~\max\left\{{2(1+6C_{{\bf G}}+\Gamma)\over (2-3p)-16\theta}, {\eta(\bar{u})\over \theta^{1/2}}\right\},
\eeq
the Cauchy problem(\ref{BL})-(\ref{G}) with initial condition (\ref{ini-H}) exhibits wave breaking at some time $T^*>0$ such that 
\bel{b-T*}
{1\over (1+\theta)}\cdot {1\over \Big|\ds\inf_{x\in\R}[f'(\bar{u})]'(x)\Big|}~\leq~T^*~\leq~{1\over (1-\theta)}\cdot {1\over \Big|\ds \inf_{x\in\R}[f'(\bar{u})]'(x)\Big|}\,.
\eeq
\end{theorem}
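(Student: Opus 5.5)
We track the slope $w(t,x)\doteq [f'(u)]_x=f''(u)u_x$ along characteristics $\dot x=f'(u)$ and compare $m(t)\doteq-\inf_x w(t,x)$ with the Riccati solution of $\dot m=m^2$. Using (\ref{ux}),
\[
w_t+f'(u)w_x~=~-w^2+f'''(u)u_x\big({\bf G}[u]-f'(u)u_x\big)+f'''(u)f'(u)u_x^2\cdot 0+f''(u)\,{\bf G}[u_x],
\]
where, after simplification, the right-hand side is $-w^2+f'''(u)u_x{\bf G}[u]+f''(u){\bf G}[u_x]$. At a point where $w$ attains its infimum we therefore expect
\[
\Big|\dot m-m^2\Big|~\le~|f'''(u)|\,|u_x|\,\|{\bf G}[u]\|_{{\bf L}^\infty}+f''(u)\|{\bf G}[u_x]\|_{{\bf L}^\infty}.
\]
This should hold in the Dini sense, via the standard argument for the infimum of a $\mathcal C^1$ function. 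Local existence of an $H^2$ classical solution is standard and persists as long as $\|u_x\|_{{\bf L}^\infty}$ stays bounded, so $T^*$ is the blow-up time of $m$. Strict convexity gives $f''>0$, so a bounded $m$ keeps $\inf u_x$ bounded.

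The heart of the argument is a bootstrap. Assume that on $[0,T]$
\[
\Big|{\dot m\over m^2}-1\Big|~\le~\theta .
\]
Integrating gives $m_0/(1+(1+\theta)m_0t)\le m(t)\le m_0/(1-(1-\theta)m_0t)$, which yields exactly (\ref{b-T*}) once blow-up is established. To close the bootstrap we bound the error terms by $\theta m^2$. The bounds needed are the following.

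\begin{itemize}
\item[(i)] $\|u\|_{{\bf L}^2}$ is conserved by (\ref{L-2}), and $\|u\|_{{\bf L}^\infty}\le\|u\|^{1/2}_{{\bf L}^2}\|u_x\|^{1/2}_{{\bf L}^2}$. This bounds $f''(u)$ and $|f'''(u)|\le\Gamma(1+|u|^p)$, and is what produces the constants in $\eta_2$.
\item[(ii)] From (\ref{u-x-Lq}) with $q=1$ and (\ref{pend}), $\frac{d}{dt}\|u_x\|^2_{{\bf L}^2}\le m\|u_x\|^2_{{\bf L}^2}$. Similarly, (\ref{u-xx-L2}) gives $\frac{d}{dt}\|u_{xx}\|^2_{{\bf L}^2}\le 5m\|u_{xx}\|^2_{{\bf L}^2}+2\Gamma(1+\|u\|^p_\infty)\|u_x\|^3_{{\bf L}^6}\|u_{xx}\|_{{\bf L}^2}$, with $\|u_x\|_{{\bf L}^6}$ controlled through (\ref{u-x-Lq}) with $q=5$ and (\ref{Lq-b}).
\item[(iii)] For the Gagliardo–Nirenberg-type bound $\|{\bf G}[u_x]\|_{{\bf L}^\infty}\lesssim\|u_x\|^{1/2}_{{\bf L}^2}\|u_{xx}\|^{1/2}_{{\bf L}^2}$, we write ${\bf G}[u_x]=({\bf G}[u_x])$, use $H^1\hookrightarrow{\bf L}^\infty$, and note that ${\bf G}$ commutes with $\partial_x$ and is ${\bf L}^2$-bounded. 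Likewise $\|{\bf G}[u]\|_{{\bf L}^\infty}\lesssim\|u\|^{1/2}_{{\bf L}^2}\|u_x\|^{1/2}_{{\bf L}^2}$.
\end{itemize}

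Gronwall with $\int_0^t m\,ds\le\frac{1}{1-\theta}\ln\frac{m_0}{m_0-(1-\theta)m_0 t}\cdot(\ldots)$ converts (ii) into polynomial growth: $\|u_x\|^2_{{\bf L}^2}\lesssim (m/m_0)^{1+O(\theta)}$ and $\|u_{xx}\|^2_{{\bf L}^2}\lesssim (m/m_0)^{5+O(\theta)}$.

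The main obstacle is verifying that the error is at most $\theta m^2$ uniformly up to blow-up. The leading term is $f''(u)\|u_x\|^{1/2}_{{\bf L}^2}\|u_{xx}\|^{1/2}_{{\bf L}^2}\lesssim \eta_2\,\eta_1^{1/8}(m/m_0)^{3/2+O(\theta)}$, while the $f'''$ terms pick up an extra factor $(m/m_0)^{3p/4}$ from $\|u\|^p_\infty$. We need the total exponent to stay $\le 2$, i.e.\ roughly $\tfrac32+\tfrac{3p}{4}+c\theta<2$. This is exactly where $\theta<\theta_p=(2-3p)/16$ enters, and it is why the lower bound $m_0>2(1+6C_{\bf G}+\Gamma)/((2-3p)-16\theta)$ is required: that bound absorbs the lower-order and ${\bf L}^6$ contributions. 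With exponent $<2$ and $m\ge m_0$, the error satisfies ${\rm error}\le \eta(\bar u)^2 m_0^{-2}\cdot m^2\cdot(\ldots)$, which is $\le\theta m^2$ precisely when $m_0>\eta(\bar u)/\theta^{1/2}$. That closes the bootstrap on the whole existence interval.

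A continuity argument finishes the proof. The bootstrap inequality holds strictly at $t=0$ by (\ref{B-Con}) and cannot first fail at any time, so it holds on $[0,T^*)$. The lower Riccati bound forces $m\to\infty$ no later than $\frac{1}{(1-\theta)m_0}$, and the upper bound keeps $m$ finite, hence the solution classical, before $\frac{1}{(1+\theta)m_0}$. This gives wave breaking with (\ref{b-T*}). Careful bookkeeping of the exponents in (ii)–(iii), matching the specific constants in $\eta_1,\eta_2$, is the step I expect to be delicate.
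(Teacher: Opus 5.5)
Your plan is essentially the paper's proof. It uses the same Riccati equation for $[f'(u)]_x$ along characteristics, the same energy inequalities for $\|u_x\|_{{\bf L}^2}$, $\|u_x\|_{{\bf L}^6}$ and $\|u_{xx}\|_{{\bf L}^2}$, and a continuity argument keeping the forcing below $\theta m^2$. The only difference is how the bootstrap is closed. The paper collects the forcing bound into $Z=(Z_1+\Gamma Z_2)^{1/4}(\alpha_1+\alpha_2z_1^{1/4}+\alpha_3z_1^{(p+1)/4})$, with $Z(0)=\eta_1^{1/4}\eta_2=\eta(\bar u)^2$. It then shows $\dot Z\le 2(1-\theta)mZ$ and deduces that $m-Z^{1/2}\theta^{-1/2}$ is nondecreasing. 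Your integrated version is equivalent, with two adjustments. First, bootstrap on the continuous inequality $Z(t)<\theta m^2(t)$ rather than on $|\dot m/m^2-1|\le\theta$, since $m$ is only Lipschitz. Second, use $\frac{d}{dt}\ln m\ge(1-\theta)m$, i.e.\ $\int_0^t m\,ds\le\frac{1}{1-\theta}\ln(m(t)/m_0)$, to get $Z(t)\le\eta(\bar u)^2(m(t)/m_0)^2<\theta m^2(t)$.

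There are some slips to fix when you write it out.

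\begin{itemize}
\item Your integrated Riccati bounds should read $m_0/(1-(1-\theta)m_0t)\le m(t)\le m_0/(1-(1+\theta)m_0t)$; your final paragraph already uses the correct ones. Also, bounding $\int_0^t m$ through the lower comparison solution, as in your Gronwall display, goes the wrong way.

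\item More substantively, your exponent count is off. The factor $f''(u)\le f''(0)+\Gamma(|u|+|u|^{p+1}/(p+1))$ itself grows like $\|u_x\|_{{\bf L}^2}^{(p+1)/2}$. After the Young splitting, the ${\bf L}^6$ forcing makes the growth rate of $\|u_{xx}\|^2_{{\bf L}^2}$ equal to $(5+\frac p2)m$ plus constants. Together these give $\dot Z/Z\le(\frac74+\frac{3p}{8})m+\frac14(1+6C_{\bf G}+\Gamma)$, not an exponent $\frac32+\frac{3p}{4}$, which would not reproduce $\theta_p$. This rate is at most $2(1-\theta)m$ exactly when $\theta<\theta_p$ and $m\ge 2(1+6C_{\bf G}+\Gamma)/((2-3p)-16\theta)$. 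That is precisely where both thresholds in (\ref{B-Con}) come from. The constant part of the rate must be absorbed pointwise, using $m\ge m_0$, before integrating.

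\item Since $f''(u)$ can grow with $\|u\|_{{\bf L}^\infty}$, $f''>0$ alone does not convert $m\to\infty$ into $\inf_x u_x\to-\infty$, as the definition of wave breaking requires. You need a bound like (\ref{bb-m}).
\end{itemize}
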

{\bf Proof.} {\bf 1.} Since the linear operator ${\bf G}:{\bf L}^{2}(\R)\to {\bf L}^2(\R)$ is bounded and satisfies {\bf (A1)}, it is known from \cite{K}  that the above Cauchy problem  admits  a unique classical solution $u\in \mathcal{C}\big([0,t),H^2(\R)\big)\cap \mathcal{C}^1\big([0,t),H^1(\R)\big)$ for some $t>0$. Hence, we denote by 
\bel{T*}
T^*~\doteq~\sup\{t\geq 0: (\ref{BL}),(\ref{G}),(\ref{ini-H})~\text{admits a unique solution}~ u\in\mathcal{C}\big([0,t],H^2(\R)\big)\}~>~0.
\eeq
For every $t\in [0,T^*)$, set 
\bel{qan1}
m(t)~\doteq~\Big|\ds\inf_{x\in\R}\big[f'\big(u(t,x)\big)\big]_x\Big|\qquad\mathrm{and}\qquad M~\doteq~\|\bar{u}\|_{{\bf L}^2(\R)}.
\eeq
Recalling (\ref{L-2}) and (\ref{Ge-f}), we get 
\[
\|u(t,\cdot)\|_{{\bf L}^2(\R)}~=~M,\quad m(t)~\leq~\Gamma_1\, \left(1+\|u(t,\cdot)\|^{p+1}_{{\bf L}^\infty(\R)}\right)\Big|\inf_{x\in \R} u_x(t,x)\Big|,
\]
for some constant $\Gamma_1>0$. Using the same argument as in the proof of \cite[Lemma 3]{FAO}, one can show that 
\bel{u-l-b}
\begin{split}
\|u(t,\cdot)\|_{{\bf L}^\infty(\R)}&~\leq~\left[3\cdot \left(-\inf_{x\in \R} u_x(t,x)\right)\cdot \|u(t,\cdot)\|^{2}_{{\bf L}^2(\R)} \right]^{1/3}~=~\left[3M^2 \Big|\ds\inf_{x\in \R} u_x(t,x)\Big| \right]^{1/3},
\end{split}
\eeq
and this yields 
\bel{bb-m}
 m(t)~\leq~\Gamma_1\, \left(1+3^{(p+1)/3}M^{2(p+1)/3}\Big|\inf_{x\in \R} u_x(t,x)\Big|^{p+1\over 3}\right)\cdot \Big|\inf_{x\in \R} u_x(t,x)\Big|.
\eeq
On the other hand, from (\ref{u-x-Lq}), (\ref{pend}), and  (\ref{Lq-b}), it holds
\bel{ODE111}
{d\over dt}\|u_x(t,\cdot)\|^2_{{\bf L}^2(\R)}~\leq~m(t)\cdot\|u_x(t,\cdot)\|^2_{{\bf L}^2(\R)},\quad {d\over dt}\|u_x(t,\cdot)\|^6_{{\bf L}^6(\R)}~\leq~\big(5m(t)+6C_{{\bf G}}\big)\cdot \|u_x(t,\cdot)\|^6_{{\bf L}^6(\R)}.
\eeq
Calling $z_1,z_6$  the solution of 
\bel{ODE-1}
\begin{cases}
\dot{z}_1(t)~=~m(t)z_1(t),\qquad z_1(0)~=~\|\bar{u}'\|^2_{{\bf L}^2(\R)},\\[2mm]
 \dot{z}_6(t)~=~\big(5m(t)+6C_{{\bf G}}\big) z_6(t),\qquad z_6(0)~=~\|\bar{u}'\|^6_{{\bf L}^6(\R)},
\end{cases}
\eeq
we have 
\bel{comp-1}
\|u_x(t,\cdot)\|^2_{{\bf L}^2(\R)}~\leq~z_1(t),\qquad \|u_x(t,\cdot)\|^6_{{\bf L}^6(\R)}~\leq~z_6(t),\qquad t\in [0,T^*].
\eeq
Next, by (\ref{qan1}), we estimate  
\[
\|u(t,\cdot)\|_{{\bf L}^{\infty}(\R)}~\leq~2^{1/2}\|u(t,\cdot)\|^{1/2}_{{\bf L}^{2}(\R)}\|u_x(t,\cdot)\|^{1/2}_{{\bf L}^{2}(\R)}~\leq~2^{1/2}M^{1/2}z_1^{1/4}(t)
\]
\bel{ux-E}
\|u_x(t,\cdot)\|_{{\bf L}^{\infty}(\R)}~\leq~2^{1/2}\|u_x(t,\cdot)\|^{1/2}_{{\bf L}^{2}(\R)}\|u_{xx}(t,\cdot)\|^{1/2}_{{\bf L}^{2}(\R)}~\leq~2^{1/2}z_1^{1/4}(t)\|u_{xx}(t,\cdot)\|^{1/2}_{{\bf L}^{2}(\R)},
\eeq
and (\ref{u-xx-L2})-(\ref{Ge-f}) yield
\bel{ODE-2}
\begin{split}
{d\over dt} &\|u_{xx}(t,\cdot)\|^2_{{\bf L}^{2}(\R)}~\leq~5m(t)\, \|u_{xx}(t,\cdot)\|^2_{{\bf L}^{2}(\R)}+2\Gamma\int_{\R}(1+|u|^p) |u_x|^3|u_{xx}|dx\\[1mm]
 &~\leq~5m(t)\, \|u_{xx}(t,\cdot)\|^2_{{\bf L}^{2}(\R)}+2\Gamma\, \big(1+2^{p/2}M^{p/2}z_1^{p/4}(t)\big)\, z_6^{1/2}(t)\, \|u_{xx}(t,\cdot)\|_{{\bf L}^{2}(\R)}\\[2mm]
 &~\leq~(5m(t)+\Gamma)\, \|u_{xx}(t,\cdot)\|^2_{{\bf L}^{2}(\R)}+\Gamma\, \big(2+2^{p+1}M^{p}z_1^{p/2}(t)\big)z_6(t)
\end{split}
\eeq
Calling $\tilde{z}_2$  the solution of the ODE
\[
\dot{\tilde{z}}_2(t)~=~(5m(t)+\Gamma)\tilde{z}_2(t)+\Gamma\, \big(2+2^{p+1}M^{p}z_1^{p/2}(t)\big)z_6(t),\qquad \tilde{z}_2(0)~=~\|\bar{u}''\|^2_{{\bf L}^2(\R)},
\]
we have 
\[
\|u_{xx}(t,\cdot)\|^2_{{\bf L}^{2}(\R)}~\leq~\tilde{z}_2(t),\qquad t\in [0,T^*].
\]
Thus, we set 
\bel{Z1Z2}
Z_1~=~z_1\tilde{z}_2,\qquad Z_2~=~2z_1z_6+2^{p+1}M^{p}z_1^{1+p/2}z_6.
\eeq
By (\ref{ODE-1}) and (\ref{ODE-2}), we obtain  
\bel{Sys-1}
\begin{cases}
\dot{Z}_1(t)~=~(6m(t)+\Gamma)Z_1(t)+\Gamma Z_2(t),\\[1mm]
\dot{Z}_2(t)~\leq~\ds\big[(6+p/2) m(t)+6C_{{\bf G}}\big] Z_2(t),
\end{cases}
\qquad t\in (0,T^*).
\eeq
In particular, as long as $m$ remains bounded, both $Z_1$ and $Z_2$ stay bounded. Consequently, $\|u(t,\cdot)\|_{H^2(\mathbb{R})}$ also remains bounded.
Hence,  (\ref{bb-m}) implies that  $T^*$ defined in (\ref{T*}) corresponds to the wave breaking time.
\medskip

\n {\bf 2.} Next, let $v(t,x)$ be  the speed of the characteristic at $(t,x)$ defined by 
$$v(t,x)~\doteq~ f'\big(u(t,x)\big)\qquad\forall (t,x)\in [0,T^*]\times\R.$$
 Using (\ref{BL}), we  compute 
\[
v_t~=~f''(u)u_t~=~f''(u)\big(-f'(u)u_x+{\bf G}[u]\big)~=~-f'(u)v_x+f''(u){\bf G}[u].
\]
Differentiating the above equation with respect to $x$, we get
\[
(v_x)_t+f'(u) v_{xx}+v^2_x~=~f'''(u)u_x{\bf G}[u]+f''(u){\bf G}[u_x].
\]
Hence, for a given $\beta\in \R$, let $t\mapsto x(t;\beta)$ be the  characteristic starting from $\beta$ at time $t=0$ which solves the ODE
\bel{Cha-s}
\dot{x}(t;\beta)~=~f'\big(u(t,\beta)\big),\qquad x(0;\beta)~=~\beta.
\eeq
The map $t\mapsto w(t;\beta)$ which is defined as
\bel{w-f}
w(t;\beta)~=~-v_x\big(t,x(t;\beta)\big),
\eeq solves the ODE
\bel{w-b}
\begin{cases}
\dot{w}(t;\beta)~=~w^2(t;\beta)-\big[f'''(u)u_x{\bf G}[u]+f''(u){\bf G}[u_x]\big](t,x(t;\beta)),\\[2mm]
 w(0;\beta)~=~-f''\big(\bar{u}(\beta)\big)\bar{u}'(\beta),
\end{cases}
\eeq
and 
\bel{m-w}
\sup_{\beta\in \R}w(t;\beta)~=~m(t),\qquad \sup_{\beta\in \R}w^2(t;\beta)~\geq~m^2(t)\qquad\forall t\in [0,T^*).
\eeq
By  (\ref{L-2}) and (\ref{qan1}), we have 
\bel{G-b-u}
\|{\bf G}[u(t,\cdot)]\|_{{\bf L}^{\infty}(\R)}~\leq~2^{1/2}\|{\bf G}[u(t,\cdot)]\|^{1/2}_{{\bf L}^{2}(\R)}\|{\bf G}[u_x(t,\cdot)]\|^{1/2}_{{\bf L}^{2}(\R)}~\leq~2^{1/2}M^{1/2}z_1^{1/4}(t),
\eeq
\bel{G-b-ux}
\|{\bf G}[u_x(t,\cdot)]\|_{{\bf L}^{\infty}(\R)}~\leq~2^{1/2}\|{\bf G}[u_x(t,\cdot)]\|^{1/2}_{{\bf L}^{2}(\R)}\|{\bf G}[u_{xx}(t,\cdot)]\|^{1/2}_{{\bf L}^{2}(\R)}~\leq~2^{1/2}z_1^{1/4}(t)\tilde{z}_2^{1/4}(t).
\eeq
Using (\ref{ux-E}), (\ref{Z1Z2}), and (\ref{Ge-f}), we estimate
\[
\begin{split}
\big|f'''(u)u_x{\bf G}[u]&+f''(u){\bf G}[u_x]\big|\\[2mm]
& \leq~ \Gamma (1+|u|^p) |u_x| |{\bf G}[u]| + \bigg[f''(0) + \Gamma \, \Big(|u| + \frac{|u|^{p+1}}{p+1}\Big) \bigg] \, |{\bf G}[u_x]| \\
& \leq~2^{1/2} \tilde{z}^{1/4}_2 z_1^{1/4}\, \left[f''(0)+\Gamma \left(2^{3/2}M^{1/2} z_1^{1/4}+{p+2 \over p+1} 2^{(p+1)/2} M^{(p+1)/2} z_1^{(p+1)/4}\right)\right] \\
& =~2^{1/2}Z_1^{1/4}\, \left[f''(0)+\Gamma \left(2^{3/2}M^{1/2}z_1^{1/4}+{p+2 \over p+1}2^{(p+1)/2}M^{(p+1)/2} z_1^{(p+1)/4}\right)\right]. 
\end{split}
\]
For simplicity,  we introduce the nondecreasing  map $t\mapsto Z(t)$ which is defined by  
\bel{Z}
Z(t)~=~\left(Z_1+\Gamma Z_2\right)^{1/4}\, \left(\alpha_1+\alpha_2 z_1^{1/4}+\alpha_3z_1^{(p+1)/4}\right),
\eeq
with the constants  
\bel{al}
\alpha_1~=~2^{1/2}f''(0),\qquad \alpha_2~=~4\Gamma M^{1/2},\qquad \alpha_3~=~{p+2 \over p+1} \, 2^{1+p/2} \,\Gamma \, M^{(p+1)/2}.
\eeq
Then, by (\ref{cons})-(\ref{cons2})  and (\ref{w-b}) we obtain
\bel{w-beta}
Z^{1/2}(0)~=~\eta(\bar{u}),\qquad \big|\dot{w}(t;\beta)-w^2(t;\beta)\big|~\leq~Z(t),\qquad t\in (0,T^*).
\eeq
In the following steps, we shall use ODEs (\ref{Sys-1}) and (\ref{w-beta}) to show that $m$ goes to $+\infty$ in finite time and this yields the wave breaking for (\ref{BL}), (\ref{ini-H}).
\medskip

{\bf 3.} By assumption (\ref{B-Con}) and (\ref{w-beta}), we have
\[
\begin{split}
m(0)~=~\Big|\ds\inf_{x\in\R}\big[f'\big(\bar{u}(x)\big)\big]'\Big|&~>~\max\left\{{2(1+6C_{{\bf G}}+\Gamma)\over (2-3p)-16\theta},  {Z^{1/2}(0)\over \theta^{1/2}}\right\},
\end{split}
\]
and this implies that 
\bel{t111}
t_1~=~\sup\left\{t\in [0,T^*):m(s)>\max\left\{{2(1+6C_{{\bf G}}+\Gamma)\over (2-3p)-16\theta},  {Z^{1/2}(s)\over \theta^{1/2}}\right\}~~\forall s\in [0,t]\right\}~>~0.
\eeq
For every $(t,\beta)\in (0,t_1)\times\R$, it holds
\[
\begin{split}
{d\over dt}\left(w(t;\beta)- {Z^{1/2}\over \theta^{1/2}}\right)&~\geq~w^2(t;\beta)-\theta m^2-{1-\theta \over \theta^{1/2}}\,  mZ^{1/2}(t)~\geq~w^2(t;\beta)-m^2(t),
\end{split}
\]
In particular, for every $0<s<t_1-t$, we have 
\[
\begin{split}
m(t+s)-{Z^{1/2}(t+s)\over \theta^{1/2}}&~\geq~w(t+s;\beta)-{Z^{1/2}(t+s)\over \theta^{1/2}}\\[2mm]
&~\geq~w(t;\beta)-{Z^{1/2}(t)\over \theta^{1/2}}+\int_{t}^{t+s}\left(w^2(\tau;\beta)-m^2(\tau)\right)d\tau\\
&~=~w(t;\beta)-{Z^{1/2}(t)\over \theta^{1/2}}+ s \big[w^2(t;\beta)-m^2(t)\big]+ o(s),
\end{split}
\]
and (\ref{m-w}) yields 
\[
m(t+s)-{Z^{1/2}(t+s)\over \theta^{1/2}}~\geq~m(t)-{Z^{1/2}(t)\over \theta^{1/2}}+ o(s).
\]
By the Lipschitz continuity property of the map $t\mapsto m(t)$, we get
\[
{d\over dt} \left(m(t)-{Z^{1/2}(t)\over \theta^{1/2}}\right)~\geq~0\quad a.e.~t\in [0,t_1),
\]
which implies   
\[
m(t_1)- {Z^{1/2}(t_1)\over \theta^{1/2}}~\geq~m(0)- {Z^{1/2}(0)\over \theta^{1/2}}~>~0,\qquad m(t_1)~\geq~m(0)~>~{2(1+6C_{{\bf G}}+\Gamma)\over (2-3p)-16\theta}.
\]
From the definition of $t_1$ in (\ref{t111}), it follows that $t_1 = T^*$ and
\begin{equation}\label{Key-s2}
m(t)~>~ \frac{Z^{1/2}(t)}{\theta^{1/2}} \quad \forall\, t \in [0,T^*).
\end{equation}
Moreover, (\ref{w-beta}) yields
\begin{equation}\label{kes-1}
w^2(t;\beta) - \theta m^2(t)~\le~ \dot{w}(t;\beta) \le w^2(t;\beta) + \theta m^2(t)
\end{equation}
for all $\beta\in \R$ and a.e~ $t\in (0,T^*)$
\medskip

\n {\bf 4.} With the same argument in  Step 3, we deduce from the first inequality in (\ref{kes-1}) that
\[
\dot{m}(t) \ge (1-\theta)\, m^2(t) \quad \text{for a.e.\ } t \in [0,T^*).
\]
By a standard comparison argument, $m$ is bounded below by $m_1$ which is the solution of  
\bel{m1}
\dot{m}_1(t)~=~(1-\theta)\, m_1^2(t),\qquad m_1(0)=m(0).
\eeq
Solving the above ODE, we get 
\bel{low-b}
m(t)~\geq~m_1(t)~=~{m(0)\over 1-(1-\theta)t\, m(0)}
,\qquad t\in [0,T^*),
\eeq
and this  yields the upper bound of $T^*$ in (\ref{b-T*}).

Finally, using the second inequality in \eqref{kes-1}, for every $0 <  t-s<t < T^*$, we obtain
\[
\begin{aligned}
w(t;\beta)
&~\le~ m(t-s) + \int_{t-s}^{t} \big( w^2(\tau;\beta) + \theta m^2(\tau) \big)\, d\tau \\[2mm]
&~\le~ m(t-s) + s \big( w^2(t;\beta) + \theta  m^2(t) \big) + o(s).
\end{aligned}
\]
Together with \eqref{m-w}, this implies
\[
\dot{m}(t)~ \le~ (1+\theta)\, m(t) \quad \text{for a.e.\ } t \in (0,T^*).
\]
Hence, we get 
\begin{equation}\label{low-b}
m(t)~\le~ \frac{m(0)}{1-(1+\theta)t\, m(0)} \qquad \forall t \in [0,T^*),
\end{equation}
which gives the lower bound for $T^*$ in \eqref{b-T*}.
\endproof
\v
%

\begin{remark}\label{Lset}
For every given  $T>0$, we consider
\[
\mathcal{B}_T~=~\left\{v\in H^2(\R):\Big|\inf_{x\in\R}\big[f'\big(v(x)\big)\big]'\Big|> \max\left\{{(1+6C_{{\bf G}}+\Gamma)\over 4\theta_p}, {\sqrt{2}\cdot \eta(v)\over \sqrt{\theta_p}},{2\over (2-\theta_p)T}\right\}
\right\}.
\]
with $\eta$ defined in (\ref{cons})-(\ref{cons2}) and the constant $\theta_p=(2-3p)/16\in (0,1/8]$. Applying Theorem \ref{main4} for $\theta=\theta_p/2$, we obtain that the Cauchy problem (\ref{BL})-(\ref{G})
with initial data $\bar{u} \in \mathcal{B}_T$ exhibits wave breaking at time 
\[
T^*~\leq~{1\over (1-\theta_p/2)}\cdot {1\over \Big|\ds\inf_{x\in\R}[f'(\bar{u})]'(x)\Big|}~<~T.
\]
Hence, by (\ref{Ge-f}) and (\ref{cons})-(\ref{cons2}), the maps $v\mapsto [f'(v)]'$ and $v\mapsto \eta(v)$  are continuous in $H^2(\R)$, which implies that  the set $\mathcal{B}_T$ is an open subset of $H^2(\R)$. On the other hand, the open set $\mathcal{B}_T$ is quite large, since for every $v\in H^2(\R)\backslash\{0\}$ with $ \ds\inf_{x\in\R}\big[f'\big(v(x)\big)\big]'<0$, there exist constants $\gamma_0,\lambda_0>0$ such that the set 
\[
F_{v}~=~\{\lambda v \lor \gamma\, v(\lambda x): \gamma>\gamma_0 ,\lambda\geq \lambda_0 \}
\]
is contained in $\mathcal{B}_T$.
\end{remark}
To conclude this subsection, we shall establish the wave breaking of (\ref{BL})-(\ref{G}) for fluxes $f$ of quadratic type as in (\ref{f-q}), for which the analysis is more clear and the resulting estimates are sharper.
\medskip

\n {\bf Proof of Theorem \ref{BHG1}.} Assume that  $f$ is in a quadratic form as in  (\ref{f-q}). Then we have 
\[
f'''(u)~=~0,\qquad f''(u)~=~2a\qquad\forall u\in \R,
\]
which implies that 
\bel{m11}
m(t)~=~-\inf_{x\in\R}\big[f'\big(u(t,x)\big)\big]_x~=~-2a\inf_{x\in \R}u_x(t,x),\qquad\forall t\in [0,T^*).
\eeq
Moreover, (\ref{ODE111}), (\ref{ODE-2}), and (\ref{w-b}) becomes 
\[
{d\over dt} \|u_x(t,\cdot)\|^2_{{\bf L}^2(\R)}~\leq~m(t)\, \|u_x(t,\cdot)\|^2_{{\bf L}^2(\R)},\qquad {d\over dt} \|u_{xx}(t,\cdot)\|^2_{{\bf L}^2(\R)}~\leq~5m(t)\, \|u_{xx}(t,\cdot)\|^2_{{\bf L}^2(\R)},
\]
\[
\dot{w}(t;\beta)~=~w^2(t;\beta)-2a{\bf G}[u_x]\big(t,x(t;\beta)\big),\qquad  w(0;\beta)~=~ -2a \bar{u}'(\beta),\qquad \beta\in \R.
\]
Hence, consider the map $t\mapsto Z(t)$ defined by 
\[
Z(t)~=~2^{3/2}a\cdot \|u_x(t,\cdot)\|^{1/2}_{{\bf L}^2(\R)}\cdot \|u_{xx}(t,\cdot)\|^{1/2}_{{\bf L}^2(\R)}.
\] 
Recalling (\ref{G-b-ux}), we get 
\bel{SYS1}
\begin{cases}
\ds\dot{Z}(t)~\leq~{3m(t)\over 2}Z(t),\qquad Z(0)~=~2^{3/2}a \|\bar{u}'\|^{1/2}_{{\bf L}^2(\R)}\|\bar{u}''\|^{1/2}_{{\bf L}^2(\R)} \\[3mm]
\big|\dot{w}(t;\beta)-w^2(t;\beta)\big|~\leq~Z(t),\qquad w(0;\beta)~=~ -2a\bar{u}'(\beta).
\end{cases}
\eeq
Using the same argument as in the proof of Theorem \ref{main4}, from the condition on the initial data $\bar{u}$ in (\ref{B-Con-0}), we deduce that
\[
m(t)~>~\frac{Z^{1/2}(t)}{\theta^{1/2}}, \qquad \forall\, t \in [0,T^*),
\]
which implies (\ref{kes-1}). Hence, following steps 3 and 4 in the proof of Theorem \ref{main4}, we obtain
\[
\frac{m(0)}{1-(1-\theta)t\, m(0)}~\leq~m(t)~\leq~\frac{m(0)}{1-(1+\theta)t\, m(0)}, \qquad t \in [0,T^*),
\]
and (\ref{m11}) yields (\ref{b-T*-0}).
\endproof
\subsection{General strictly convex fluxes} 
In this subsection, we shall study the wave breaking of (\ref{BL})-(\ref{G}) for general strictly convex flux $f\in \mathcal{C}^3(\R)$. For $v \in H^2(\mathbb{R})$, we denote by 
\[
\lambda_1(v)~=~2^{1/2}\|v'\|^{1/4}_{{\bf L}^2(\R)} \left(\alpha_2(v)+\alpha_3(v) \|v\|^{1/2}_{{\bf L}^2(\R)} \|v'\|^{1/2}_{{\bf L}^2(\R)}\right)^{1/2}\left(\|v''\|^2_{{\bf L}^2(\R)}+\alpha_3(v)\|v'\|^6_{{\bf L}^6(\R)}\right)^{1/8},
\]
\[
\lambda_2(v) ~=~ {3^6 2C^3_K\|v\|^2_{{\bf L}^2(\R)}\over  \alpha_2(v) \|v\|^3_{{\bf L}^{\infty}(\R)}},\qquad \alpha_i(v)~=~\max_{|\omega|\leq 2\|v\|_{{\bf L}^\infty(\R)}}|f^{(i)}(\omega)|,\qquad i=2,3.
\]Our result is stated as follows:
\begin{theorem}\label{general-f}
Under the hypothesis {\bf (A1)}, assume that $f\in \mathcal{C}^3(\R)$ is strictly convex. Then for every $\theta\in (0,1/8)$ and $\bar{u}\in H^2(\R)$ with 
\bel{B-Con-2}
\Big|\inf_{x\in \R}\big[f'\big(\bar{u}(x)\big)\big]'\Big|~>~\max\left\{\frac{\alpha_3+6C_{{\bf G}}+1}{1-8\theta}, {\lambda_1(\bar{u})\over \theta^{1/2}}, {\lambda_2(\bar{u}) \theta \over (1-\theta)^3}\right\},
\eeq
the Cauchy problem(\ref{BL})-(\ref{G}) with initial condition (\ref{ini-H}) exhibits wave breaking at some time $T^*>0$ such that 
\bel{b-T*-g}
{1\over (1+\theta)}\cdot {1\over \Big|\ds\inf_{x\in\R}[f'(\bar{u})]'(x)\Big|}~\leq~T^*~\leq~{1\over (1-\theta)}\cdot {1\over \Big|\ds\inf_{x\in\R}[f'(\bar{u})]'(x)\Big|}\,.
\eeq
\end{theorem}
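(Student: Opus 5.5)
The plan is to run the argument of Theorem \ref{main4} again, with one change. Since $f$ now has no growth condition, I will not control $f''(u)$ and $f'''(u)$ through powers of $\|u\|_{{\bf L}^\infty}$. Instead I will work on the time interval where $\|u(t,\cdot)\|_{{\bf L}^\infty(\R)}\le 2\|\bar u\|_{{\bf L}^\infty(\R)}$. On that interval $|f''(u)|\le\alpha_2$ and $|f'''(u)|\le\alpha_3$, with $\alpha_i=\alpha_i(\bar u)$. Accordingly I define
\[
t_1~=~\sup\Big\{t<T^*:~ \|u(s,\cdot)\|_{{\bf L}^\infty}<2\|\bar u\|_{{\bf L}^\infty}~\text{and}~ m(s)>\max\big\{\tfrac{\alpha_3+6C_{\bf G}+1}{1-8\theta},\tfrac{Z^{1/2}(s)}{\theta^{1/2}}\big\}~\forall s\in[0,t]\Big\},
\]
and the goal is to show $t_1=T^*$. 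As in Step 1, $T^*$ is the breaking time as long as $m$ stays bounded. Since $\|u\|_{{\bf L}^\infty}$ is now controlled, (\ref{bb-m}) can be replaced by the simpler bound $m\le 2\alpha_2|\inf u_x|$.

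On $[0,t_1)$ I will repeat the energy estimates. From (\ref{u-x-Lq}) with $q=1,5$ I get (\ref{ODE111}) exactly as before. From (\ref{u-xx-L2}), using $|f'''|\le\alpha_3$ and Young's inequality, I get
\[
\tfrac{d}{dt}\|u_{xx}\|_{{\bf L}^2}^2~\le~(5m+\alpha_3)\|u_{xx}\|^2_{{\bf L}^2}+\alpha_3\|u_x\|^6_{{\bf L}^6}.
\]
Setting $Z_1=z_1\tilde z_2$ and $Z_2=z_1z_6$ gives a linear system of the form (\ref{Sys-1}) with growth rates at most $(6m+\alpha_3+6C_{\bf G})$. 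The forcing in (\ref{w-b}) is bounded using (\ref{G-b-u}), (\ref{G-b-ux}) and (\ref{ux-E}):
\[
|f'''(u)u_x{\bf G}[u]+f''(u){\bf G}[u_x]|~\le~2^{1/2}(Z_1+\alpha_3Z_2)^{1/4}\big(\alpha_2+\alpha_3M^{1/2}z_1^{1/4}\big)\doteq Z(t).
\]
By construction $Z^{1/2}(0)=\lambda_1(\bar u)$. The inequality $m>(\alpha_3+6C_{\bf G}+1)/(1-8\theta)$ makes $\dot Z/Z\le(4-8\theta)m$ or so, dominated by $m$. Steps 3 and 4 of the proof of Theorem \ref{main4} then apply verbatim and give $m-Z^{1/2}/\theta^{1/2}$ nondecreasing together with (\ref{kes-1}). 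Hence
\[
\frac{m(0)}{1-(1-\theta)tm(0)}~\le~m(t)~\le~\frac{m(0)}{1-(1+\theta)tm(0)}.
\]

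The main obstacle is closing the bootstrap for $\|u\|_{{\bf L}^\infty}\le 2\|\bar u\|_{{\bf L}^\infty}$ up to $T^*$, and this is where $\lambda_2$ enters. Along characteristics, $\frac{d}{dt}u(t,x(t;\beta))={\bf G}[u]$. I will bound ${\bf G}[u]$ pointwise by splitting the kernel at a radius $r$. The far part is controlled by (\ref{K1}) and Cauchy--Schwarz, giving a contribution $\lesssim C_K M r^{-1/2}$. For the near part I use the oddness of $K$, which gives a contribution $\lesssim C_K r\|u_x\|_{{\bf L}^\infty}$. The rough size of $\|u_x\|_{{\bf L}^\infty}$ comes from $m\le 2\alpha_2|\inf u_x|$ together with (\ref{u-l-b}), i.e. $|\inf u_x|\gtrsim \|u\|^3_{{\bf L}^\infty}/M^2$, and from the blow-up profile $m(t)\lesssim m(0)/(1-(1+\theta)tm(0))$. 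Integrating over $[0,T^*)$ with $T^*\le 1/((1-\theta)m(0))$ and optimizing in $r$ should give a total increment of $u$ of order
\[
\Big(\tfrac{C_K^3M^2\theta}{\alpha_2 m(0)(1-\theta)^3}\Big)^{1/3},
\]
up to constants. The hypothesis $m(0)>\lambda_2\theta/(1-\theta)^3$ is exactly what forces this increment to be smaller than $\|\bar u\|_{{\bf L}^\infty}$, which keeps $\|u\|_{{\bf L}^\infty}<2\|\bar u\|_{{\bf L}^\infty}$. The part I am least sure of is handling the logarithmic integral of $m$ near $T^*$; if the direct bound fails, I will first try choosing $r$ depending on $t$, with $r\sim m(t)^{-1}$, which makes the near and far contributions integrable in time.
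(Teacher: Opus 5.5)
Your bootstrap (putting $\|u(t,\cdot)\|_{{\bf L}^\infty}<2\|\bar u\|_{{\bf L}^\infty}$ into the definition of $t_1$) and your energy and forcing estimates follow Steps 1--2 of the paper. The bootstrap is in fact a cleaner way to organize the paper's ``assume (\ref{P-as}), then verify it''. One constant does matter. The monotonicity of $m-Z^{1/2}/\theta^{1/2}$ requires exactly $\dot Z\le 2(1-\theta)mZ$, because a rate $cm$ works only if $c\le 2(1-\theta)$. This is what $\dot Z\le \frac{Z}{4}\,(7m+\alpha_3+6C_{{\bf G}}+1)$ together with $m>(\alpha_3+6C_{{\bf G}}+1)/(1-8\theta)$ gives. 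Your ``$(4-8\theta)m$'' would not suffice.

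The genuine gap is in closing the ${\bf L}^\infty$ bootstrap, specifically the bound on $\|u_x\|_{{\bf L}^\infty}$. The near-field part of ${\bf G}[u]$ needs an \emph{upper} bound on $\|u_x\|_{{\bf L}^\infty}$. The two relations you invoke are both \emph{lower} bounds on $|\inf u_x|$: $m\le 2\alpha_2|\inf u_x|$ says $|\inf u_x|\ge m/(2\alpha_2)$, and (\ref{u-l-b}) says $|\inf u_x|\ge \|u\|^3_{{\bf L}^\infty}/(3M^2)$. Neither controls $\sup_x u_x$. Since $f''$ may vanish, $m$ does not bound $|\inf u_x|$ from above either. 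Your route also gives no source for the factor $\theta^{1/3}$ in the increment you state.

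The missing step uses what you already have on $[0,t_1)$. By (\ref{ux-E}),
$\|u_x\|_{{\bf L}^\infty}\le 2^{1/2}Z_1^{1/4}\le Z/\alpha_2<\theta m^2/\alpha_2$.
Choosing the radius $r\sim (M/\|u_x\|_{{\bf L}^\infty})^{2/3}$ then gives
$|{\bf G}[u]|\lesssim C_KM^{2/3}\|u_x\|^{1/3}_{{\bf L}^\infty}\lesssim C_KM^{2/3}\alpha_2^{-1/3}\theta^{1/3}m^{2/3}(t)$.
This is where the $\theta$ and $\alpha_2^{-1}$ in $\lambda_2$ come from. Your fallback $r\sim m^{-1}$ would, with this bound, leave a near-field term of size $C_K\theta m/\alpha_2$. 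Its time integral is infinite: $\int^{T^*}m\,dt=\infty$, since $\dot m\le(1+\theta)m^2$ forces $m(t)\ge 1/((1+\theta)(T^*-t))$.

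The time integration does not close as planned either. The upper profile $m\le m(0)/(1-(1+\theta)tm(0))$ carries information only for $t<1/((1+\theta)m(0))$. Since $T^*$ may be larger, it gives no control of $m$ on the rest of $[0,T^*)$. The right tool is the lower inequality $\dot m\ge(1-\theta)m^2$. It gives $\frac{d}{dt}\big(3m^{-1/3}\big)\le-(1-\theta)m^{2/3}$, hence $\int_0^t m^{2/3}(s)\,ds\le 3m(0)^{-1/3}/(1-\theta)$ for every $t<T^*$. The paper does this through the regularized $F_\varepsilon$, since $m$ is only Lipschitz. The increment of $u$ along characteristics is then at most $\frac{2^{1/3}3^2C_KM^{2/3}}{\alpha_2^{1/3}(1-\theta)}\,\theta^{1/3}m(0)^{-1/3}$. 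The hypothesis $m(0)>\lambda_2\theta/(1-\theta)^3$ is exactly the condition that this be smaller than $\|\bar u\|_{{\bf L}^\infty}$, which closes the bootstrap with strict inequality.
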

{\bf Proof.} {\bf 1.} Recalling $T^*$ in \textup{(\ref{T*})}, we first assume that 
\begin{equation}\label{P-as}
\sup_{t \in [0, T^*)} \left( \sup_{x \in \mathbb{R}} |u(t,x)| \right) ~\le~ \alpha_1~=~2\|\bar{u}\|_{{\bf L}^{\infty}(\R)}
\end{equation}
For simplicity, we define 
\bel{f''-}
\alpha_2~\doteq~\max_{|\omega|\leq \alpha_1}|f''(\omega)|\qquad\mathrm{and}\qquad \alpha_3~\doteq~\max_{|\omega|\leq \alpha_1}|f'''(\omega)|.
\eeq
As in the proof of Theorem \ref{main4},  from (\ref{u-x-Lq}) and (\ref{u-xx-L2}), let $\big(z_1,\tilde{z}_2, z_6\big)$ be the solution of the Cauchy problem 
\bel{ODE-1-g}
\begin{cases}
\dot{z}_1(t)~=~m(t)z_1(t),\\[2mm]
\dot{\tilde{z}}_2(t)~=~(5m(t)+\alpha_3) \tilde{z}_2(t)+\alpha_3\cdot z_6(t),\\[2mm]
 \dot{z}_6(t)~=~\big(5m(t)+6C_{{\bf G}}\big) z_6(t),
\end{cases}
\qquad
\begin{cases}
 z_1(0)~=~\|\bar{u}'\|^2_{{\bf L}^2(\R)},\\[2mm]
\tilde{z}_2(0)~=~\|\bar{u}''\|^2_{{\bf L}^2(\R)},\\[2mm]
 z_6(0)~=~\|\bar{u}'\|^6_{{\bf L}^6(\R)}.
\end{cases}
\eeq
By a comparison principle, we have 
\bel{bb-u}
\|u_x(t,\cdot)\|^2_{{\bf L}^2(\R)}~\leq~z_1(t),\quad \|u_{xx}(t,\cdot)\|^2_{{\bf L}^2(\R)}~\leq~\tilde{z}_2(t),\quad \|u_x(t,\cdot)\|^6_{{\bf L}^6(\R)}~\leq~z_6(t),\qquad t\in [0,T^*].
\eeq
For every $t\in [0,T^*)$ and $\beta\in \R$, we recall that $w(t;\beta)=-v_x(t;x(t;\beta))$ with $v=f'(u)$. By (\ref{w-b}) and (\ref{G-b-u})-(\ref{G-b-ux}), we derive 
\bel{w-1-b}
\begin{split}
\big|\dot{w}(t;\beta)-w^2(t;\beta)\big|&~\leq~\alpha_3\cdot \big|u_x {\bf G}[u]\big|+\alpha_2\cdot \big| {\bf G}[u_x]\big|~\leq~2\alpha_3 M^{1/2} z^{1/2}_1\tilde{z}_2^{1/4}+2^{1/2}\alpha_2 z_1^{1/4}\tilde{z}_2^{1/4}.
\end{split}
\eeq
From (\ref{ODE-1-g}), the map $t\mapsto (Z_1(t),Z_2(t))=(z_1(t)\tilde{z}_2(t), z_1(t)z_6(t))$ solves
\bel{Z12}
\begin{cases}
\dot{Z}_1~=~(6m(t)+\alpha_3)Z_1+\alpha_3 Z_2,\\[2mm]
 \dot{Z}_2~=~\big(6m(t)+6C_{{\bf G}}\big) Z_2,
\end{cases}
\qquad 
\begin{cases}
Z_1(0)~=~\|\bar{u}'\|^2_{{\bf L}^2(\R)}\|\bar{u}''\|^2_{{\bf L}^2(\R)},\\[2mm]
Z_2(0)~=~\|\bar{u}'\|^2_{{\bf L}^2(\R)}\|\bar{u}'\|^6_{{\bf L}^6(\R)}.
\end{cases}
\eeq
Introduce the nondecreasing  map $t\mapsto Z(t)$ which is defined by  
\bel{Zt-g}
Z~=~(Z_1+\alpha_3Z_2)^{1/4}\cdot \big(2^{1/2}\alpha_2+2\alpha_3 M^{1/2} z_1^{1/4}\big).
\eeq
Using (\ref{w-1-b}) and (\ref{Z12}), for $t\in (0,T^*)$, we obtain 
\bel{w-2-b}
Z^{1/2}(0)~=~\lambda_1(\bar{u}), \qquad \big|\dot{w}(t;\beta)-w^2(t;\beta)\big|~\leq~Z(t)\qquad\forall \beta\in \R,
\eeq
and 
\bel{DC-Z}
\begin{split}
\dot{Z}
&~=~{Z\big( [6m+\alpha_3]  Z_1+ [6m+6C_{\bf G}+1] \alpha_3 Z_2\big)\over 4 (Z_1+\alpha_3 Z_2)}+(Z_1+\alpha_3 Z_2)^{1/4}\cdot {m \alpha_3 M^{1/2} z_1^{1/4}\over 2}  \\
&~\leq~{Z\over 4}\cdot \left(7m+\alpha_3+6C_{\bf G}+1\right).
\end{split}
\eeq
{\bf 2.} By the assumption (\ref{B-Con-2}), we have 
\[
\begin{split}
m(0)~=~-\inf_{x\in\R}\big[f'\big(\bar{u}(x)\big)\big]'&~>~\max\left\{\frac{\alpha_3+6C_{{\bf G}}+1}{1-8\theta},  {Z^{1/2}(0)\over \theta^{1/2}}\right\},
\end{split}
\]
and this implies that 
\bel{t112}
t_1~=~\sup\left\{t\in [0,T^*):m(s)>\max\left\{\frac{\alpha_3+6C_{{\bf G}}+1}{1-8\theta},  {Z^{1/2}(s)\over \theta^{1/2}}\right\}~~\forall s\in [0,t]\right\}~>~0.
\eeq
Hence, for every $t\in (0,t_1)$, it holds
\[
\dot{Z}(t)~\leq~2 \left(1-\theta\right) m Z, \qquad  {d\over dt}\left(w(t;\beta)- {Z^{1/2}(t)\over \theta^{1/2}}\right)~\geq~w^2(t;\beta)-m^2(t).
\]
Using the same argument as in the proof of Theorem \ref{main4}, we obtain that 
\bel{ekey}
m(t)~>~{Z^{1/2}(t)\over \theta^{1/2}}\qquad\mathrm{and}\qquad \big|\dot{w}(t;\beta)-w^2(t;\beta)\big|~\leq~\theta m^2(t)\qquad\forall t\in [0,T^*).
\eeq
Hence, we get 
\[
\frac{m(0)}{1-(1-\theta)t\, m(0)}~\leq~m(t)~\leq~\frac{m(0)}{1-(1+\theta)t\, m(0)}, \qquad t \in [0,T^*),
\]
and this yields (\ref{b-T*-g}).
\medskip

{\bf 3.} To conclude the proof, we need to verify (\ref{P-as}). For every $\beta\in \R$ and $\eta>0$, we estimate 
\[
\begin{split}
\left|{d\over dt}u(t,x(t;\beta))\right|&~=~\left|{\bf G}[u(t,\cdot)](x(t;\beta))\right|~\leq~\int_{0}^{\eta} 2yK(y) {u(t,x(t;\beta)-y)-u(t,x(t;\beta)+y)\over 2y}~dy\\
&\qquad\qquad\qquad\quad +\int_{\eta}^{\infty}K(y) \big[u(t,x(t;\beta)-y)-u(t,x(t;\beta)+y)\big]dy\\
&~\leq~2C_K \big(\eta \|u_x (t,\cdot)\|_{{\bf L}^{\infty}(\R)}+\eta^{-1/2} \|u(t,\cdot)\|_{{\bf L}^2(\R)}\big)\\
&~\leq~2C_K \left(2\eta \|u_x (t,\cdot)\|^{1/2}_{{\bf L}^{2}(\R)} \|u_{xx} (t,\cdot)\|^{1/2}_{{\bf L}^{2}(\R)}+\eta^{-1/2}M\right).
\end{split}
\] 
Choosing $\eta=2^{-4/3} M^{2/3}\|u_x (t,\cdot)\|^{-1/3}_{{\bf L}^{2}(\R)} \|u_{xx} (t,\cdot)\|^{-1/3}_{{\bf L}^{2}(\R)}$, by (\ref{Zt-g}) we derive  
\[
\begin{split}
\left|{d\over dt}u(t,x(t;\beta))\right|&~\leq~2^{2/3}3C_KM^{2/3}\|u_x (t,\cdot)\|^{1/6}_{{\bf L}^{2}(\R)} \|u_{xx} (t,\cdot)\|^{1/6}_{{\bf L}^{2}(\R)}~\leq~2^{2/3}3C_KM^{2/3}Z^{1/12}_1(t)
\\
&~\leq~{2^{1/3}3C_KM^{2/3}\over \alpha_2^{1/3}}\cdot Z^{1/3}~\leq~ {2^{1/3}3C_KM^{2/3}\over \alpha_2^{1/3}}\cdot \theta^{1/3} m^{2/3}(t),
\end{split}
\]
which yields
\bel{u-b-l}
|u(t,x(t;\beta))|~\leq~\|\bar{u}\|_{{\bf L}^{\infty}(\R)}+{2^{1/3}3C_KM^{2/3}\over \alpha_2^{1/3}} \cdot \theta^{1/3} \int_{0}^{t}m^{2/3}(s)ds,\qquad t\in [0,T^*).
\eeq
For every $\ve>0$, we introduce the decreasing function $F_{\ve}:\R\to [0,\infty)$ such that 
\[
F_{\ve}(x)~=~\int_{x}^{\infty}{1\over y^{4/3}+\ve}dy,\qquad x\in \R.
\]
From (\ref{ekey}), for every $\beta\in \R$, one has
\[
\begin{split}
F_{\ve}(m(t+s))&~\leq~F_{\ve}(w(t+s;\beta))\\
&~=~F_{\ve}(w(t;\beta))-\int_{t}^{t+s}{\dot{w}(\tau;\beta)\over w^{4/3}(\tau,\beta)+\ve}d\tau\\
&~\leq~F_{\ve}(w(t;\beta))-\int_{t}^{t+s}{w^2(\tau;\beta)\over w^{4/3}(\tau,\beta)+\ve}- { \theta m^2(\tau)\over w^{4/3}(\tau,\beta)+\ve}d\tau\\
&~=~F_{\ve}(w(t;\beta))- s\cdot \left({w^2(t;\beta)\over w^{4/3}(t;\beta)+\ve}- { \theta m^2(t)\over w^{4/3}(t;\beta)+\ve}\right)+o(s),
\end{split}
\]
and this implies
\[
F_{\ve}(m(t+s))~\leq~F_{\ve}(m(t))-s\cdot  { (1-\theta) m^2(t)\over m^{4/3}(t)+\ve}+o(s).
\]
By the Lipschitz continuity of $m$, we get 
\[
{d\over dt} F_{\ve}(m(t))~\leq~ -{ (1-\theta) m^2(t)\over m^{4/3}(t)+\ve}\quad a.e.~ t\in (0,T^*),
\]
which gives 
\[
F_{\ve} (m(t))+(1-\theta)\cdot \int_{0}^{t}{m^2(s)\over m^{4/3}(s)+\ve}ds~\leq~F_{\ve}(m(0)).
\]
Since $\ds\lim_{\ve\to 0+}F_{\ve}(x)=3x^{-1/3}$ for all $x>0$, letting $\ve\to 0+$ yields
\[
3m^{-1/3}(t)+(1-\theta)\cdot\int_{0}^{t}m^{2/3}(s)~\leq~3m^{-1/3}(0)
\]
Finally, from (\ref{u-b-l}), we derive 
\[
|u(t,x(t;\beta))|~\leq~\|\bar{u}\|_{{\bf L}^{\infty}(\R)}+{2^{1/3} 3^2 C_KM^{2/3}\over \alpha_2^{1/3}(1-\theta)}\cdot \theta^{1/3} m^{-1/3}(0),
\]
and assumption (\ref{B-Con-2}) yields (\ref{P-as}). 
\endproof
\v

{\bf Acknowledgements.} This research was partially supported by NSF-DMS 2154201 (K.T. Nguyen).

\end{document}